\numberwithin{equation}{section}
\theoremstyle{plain}
\newtheorem{theorem}{Theorem}
\newtheorem{proposition}{Proposition}[section]
\newtheorem{lemma}[proposition]{Lemma}
\newtheorem{corollary}[proposition]{Corollary}
\newtheorem*{claim}{Claim}
\theoremstyle{definition}
\newtheorem{remark}[proposition]{Remark}
\xpatchcmd{\proof}{\itshape}{\bfseries}{}{}
\newcommand{\R}{\mathbb{R}}
\newcommand{\N}{\mathbb{N}}
\newcommand{\loc}{_\mathrm{loc}}
\newcommand{\cd}{(D^2, L^1)}
\newcommand{\cl}[1]{(\Delta, L^{#1})}
\newcommand{\ch}{(\Delta, H^1)}
\newcommand{\Exp}{\mathrm{e}}
\newcommand{\cF}{\mathcal{F}}
\newcommand{\cH}{\mathcal{H}}
\newcommand{\cM}{\mathcal{M}}
\newcommand{\cN}{\mathcal{N}}
\newcommand{\cR}{\mathcal{R}}
\newcommand{\Nu}{\mathrm{N}}
\newcommand{\BMO}{\mathrm{BMO}}
\DeclarePairedDelimiter{\abs}{\lvert}{\rvert}
\DeclarePairedDelimiter{\bracks}{\lbrack}{\rbrack}
\DeclarePairedDelimiter{\norm}{\lVert}{\rVert}
\DeclarePairedDelimiter{\paren}{\lparen}{\rparen}
\DeclarePairedDelimiter{\set}{\{}{\}}
\DeclareMathOperator{\capt}{cap}
\DeclareMathOperator{\haus}{\cH}
\DeclareMathOperator{\diam}{diam}
\DeclareMathOperator{\dist}{dist}
\DeclareMathOperator{\Div}{div}
\DeclareMathOperator{\supp}{supp}
\DeclareMathOperator{\sgn}{sgn}
\let\d\relax
\newcommand{\d}{\mathop{}\!\mathrm{d}}
\newcommand{\NP}{\cN}
\newcommand{\RT}{\cR}
\newcommand{\Cs}{\ensuremath{C_{\arabic{cst@counter@normal}}}}
\title[Schr\"odinger operators involving singular potentials]{Schr\"odinger operators involving singular potentials and measure data}
\author{Augusto C. Ponce}
\address{
Augusto C. Ponce\hfill\break\indent
Universit{\'e} catholique de Louvain\hfill\break\indent
Institut de Recherche en Math{\'e}matique et Physique\hfill\break\indent
Chemin du Cyclotron 2, bte L7.01.02\hfill\break\indent
1348 Louvain-la-Neuve\hfill\break\indent
Belgium}
\email{Augusto.Ponce@uclouvain.be}
\author{Nicolas Wilmet}
\address{
Nicolas Wilmet\hfill\break\indent
Universit{\'e} catholique de Louvain\hfill\break\indent
Institut de Recherche en Math{\'e}matique et Physique\hfill\break\indent
Chemin du Cyclotron 2, bte L7.01.02\hfill\break\indent
1348 Louvain-la-Neuve\hfill\break\indent
Belgium}
\email{Nicolas.Wilmet@uclouvain.be}
\subjclass[2010]{Primary: 35J10, 35J15; Secondary: 31B15, 31B35}
\keywords{Schr\"odinger operator; Capacity; Hardy space; Measure data}
\begin{document}

\begin{abstract}
We study the existence of solutions of the Dirichlet problem for the Schr\"odinger operator with measure data
\[
\left\{
\begin{alignedat}{2}
-\Delta u + Vu &= \mu && \quad \text{in \(\Omega\)}, \\
u &= 0 && \quad \text{on \(\partial\Omega\)}.
\end{alignedat}
\right.
\]
We characterize the finite measures \(\mu\) for which this problem has a solution for every nonnegative potential \(V\) in the Lebesgue space \(L^p(\Omega)\) with \(1 \le p \le \frac{N}{2}\). The full answer can be expressed in terms of the \(W^{2,p}\) capacity for \(p > 1\), and the \(W^{1,2}\) (or Newtonian) capacity for \(p = 1\). We then prove the existence of a solution of the problem above when \(V\) belongs to the real Hardy space \(H^1(\Omega)\) and \(\mu\) is diffuse with respect to the \(W^{2,1}\) capacity.
\end{abstract}

\maketitle

\section{Introduction and main results}

Let \(\Omega \subset \R^N\) be an open, bounded and smooth set in dimension \(N \ge 2\), and let \(V \in L^1(\Omega)\) be a nonnegative function. We address in this paper the question of existence of solutions of the linear Dirichlet problem with measure data
\begin{equation}
\label{eq:dirichletProblem}
\left\{
\begin{alignedat}{2}
-\Delta u + Vu &= \mu && \quad \text{in \(\Omega\)}, \\
u &= 0 && \quad \text{on \(\partial\Omega\)}.
\end{alignedat}
\right.
\end{equation}
The variational solution of this problem when \(\mu \in L^2(\Omega)\) can be obtained by a straightforward minimization of the associated energy functional
\[
E(v) = \frac{1}{2} \int_{\Omega} \paren*{\abs{\nabla v}^2 + V v^2} - \int_{\Omega} \mu v,
\]
which is bounded from below in \(W_{0}^{1, 2}(\Omega)\) since \(V\) is nonnegative. Using as a test function in \eqref{eq:dirichletProblem} a suitable approximation of \(\sgn u\), one deduces the absorption estimate
\begin{equation}
\label{eq:absorptionEstimate}
\norm{Vu}_{L^1(\Omega)} \le \norm{\mu}_{L^1(\Omega)}.
\end{equation}

When \(\mu \in L^1(\Omega)\), the functional \(E\) need not be bounded from below, but one can use an approximation argument with \(L^2\) functions to find a solution of \eqref{eq:dirichletProblem}, based on the linearity of the equation and the absorption estimate above; see \citelist{\cite{Brezis_Strauss:1973} \cite{Gallouet_Morel:1984}}. In this case, the solution \(u \in L^1(\Omega)\) satisfies \(Vu \in L^1(\Omega)\) and the functional identity:
\[
\int_{\Omega} u \paren*{-\Delta \zeta + V \zeta} = \int_{\Omega} \zeta \mu,
\]
for every \(\zeta \in C^\infty(\overline{\Omega})\) with \(\zeta = 0\) on \(\partial\Omega\). In the sequel, we denote by \(C_0^{\infty}(\overline{\Omega})\) the space of such test functions \(\zeta\). The right-hand side of this identity is well-defined even if \(\mu\) is merely a finite Borel measure; in this case the integral is interpreted as integration of \(\zeta\) with respect to \(\mu\). This is the notion of weak solution of the Dirichlet problem~\eqref{eq:dirichletProblem} which has been introduced by Littman, Stampacchia and Weinberger~\cite{Littman_Stampacchia_Weinberger:1963}*{Definition~5.1}.

In contrast with the \(L^1\) case, the existence of solutions of the Dirichlet problem~\eqref{eq:dirichletProblem} with measure data is more subtle. For example, in dimension \(N \ge 3\), the equation
\[
-\Delta u + \frac{u}{\abs{x}^{\alpha}} = \delta_0 \quad \text{in \(B_1\)},
\]
where \(B_1\) is the unit ball in \(\R^N\) centered at \(0\), has no solution in the sense of distributions when \(\alpha \ge 2\). Heuristically, \(u(x)\) behaves like \(\frac{1}{\abs{x}^{N-2}}\), as the fundamental solution of the Laplacian in a neighborhood of \(0\), and this is incompatible with the requirement that \(\frac{u}{\abs{x}^{\alpha}} \in L^1(B_1)\); see \Cref{prop:nonexistenceExample} below in the spirit of \cite{Benilan_Brezis:2003}*{Remark~A.4}. On the contrary, for \(\alpha < 2\), a solution does exist, and more generally Stampacchia~\cite{Stampacchia:1965}*{Th\'eor\`eme~9.1} proved that, for every nonnegative function \(V \in L^p(\Omega)\) with \(p > \frac{N}{2}\), the Dirichlet problem~\eqref{eq:dirichletProblem} has a solution for any finite measure \(\mu\).

These facts have a striking analogy with the Dirichlet problem associated to the semilinear equation
\begin{equation}
\label{eq:semilinearEquation}
-\Delta u + \abs{u}^{q-1} u = \mu \quad \text{in \(\Omega\)},
\end{equation}
motivated by the Thomas--Fermi model; see \cite{Lieb:1981}. B\'enilan and Brezis~\cite{Benilan_Brezis:2003} established the existence of solutions of the Dirichlet problem associated to \eqref{eq:semilinearEquation} for every \(1 \le q < \frac{N}{N-2}\). It was an open problem to characterize the class of finite measures for which \eqref{eq:semilinearEquation} has a solution when \(N \ge 3\) and \(q \ge \frac{N}{N-2}\). The answer has been provided by Baras and Pierre~\cite{Baras_Pierre:1984} in terms of a \(W^{2,q'}\) capacity; see also \cite{Vasquez:1983} for a counterpart in dimension \(2\). More precisely, a solution of \eqref{eq:semilinearEquation} exists if and only if \(\mu(K) = 0\) for every compact set \(K \subset \Omega\) with \(W^{2,q'}\) capacity zero.

In the same spirit, given \(1 \le p \le \frac{N}{2}\), we identify the measures for which the Dirichlet problem~\eqref{eq:dirichletProblem} has a solution for every nonnegative function \(V \in L^p(\Omega)\) and more generally in \(L\loc^p(\Omega)\); see \Cref{rem:remark} below. We state such a characterization in terms of the following capacity related to the Laplacian, defined for every compact set \(K \subset \Omega\) by
\[
\capt_{\cl{p}}(K;\Omega) = \inf \set*{\norm{\Delta \zeta}_{L^p(\Omega)}^p : \text{\(\zeta \in C_0^\infty(\overline{\Omega})\) is nonnegative and \(\zeta > 1\) in \(K\)}}.
\]
This is the content of
\begin{theorem}
\label{thm:characterization}
Let \(1 \le p \le \frac{N}{2}\) and \(\mu \in \cM(\Omega)\). Then the Dirichlet problem~\eqref{eq:dirichletProblem} has a solution for every nonnegative function \(V \in L^p(\Omega)\) if and only if \(\mu\) is diffuse with respect to the \(\cl{p}\) capacity.
\end{theorem}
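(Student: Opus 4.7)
The plan is to establish the two implications separately, drawing on the Baras--Pierre analysis of the semilinear problem \eqref{eq:semilinearEquation} and on the \(L^1\)-theory recalled before \eqref{eq:absorptionEstimate}.

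For the \emph{sufficiency}, assume \(\mu\) is diffuse with respect to \(\capt_{\cl{p}}\). The key preliminary is that diffuseness is equivalent to the pairing \(\zeta \mapsto \int_\Omega \zeta\,\d\mu\) extending to a bounded linear functional on the completion of \(C_0^\infty(\overline{\Omega})\) in the norm \(\norm{\Delta\zeta}_{L^p(\Omega)}\); via Riesz representation this produces an \(f \in L^{p'}(\Omega)\) with \(-\Delta f = \mu\) in \(\Omega\) and \(f = 0\) on \(\partial\Omega\), namely the Stampacchia--Green potential of \(\mu\). Given a nonnegative \(V \in L^p(\Omega)\), Hölder's inequality yields \(Vf \in L^1(\Omega)\). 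The substitution \(u = v + f\) then reduces \eqref{eq:dirichletProblem} to
\[
-\Delta v + Vv = -Vf \text{ in } \Omega, \qquad v = 0 \text{ on } \partial\Omega,
\]
which admits a solution \(v\) with \(Vv \in L^1(\Omega)\) by the Brezis--Strauss/Gallou\"et--Morel \(L^1\)-theory; \(u = v + f\) is then the required solution of \eqref{eq:dirichletProblem}.

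For the \emph{necessity}, I argue by contraposition. Suppose \(\capt_{\cl{p}}(K;\Omega) = 0\) for some compact \(K \subset \Omega\) with \(\abs{\mu}(K) > 0\); restricting to the positive part, we may assume \(\mu(K) = a > 0\). I exhibit a nonnegative \(V \in L^p(\Omega)\) for which \eqref{eq:dirichletProblem} has no solution, generalizing the singular example recalled in the introduction. Using the vanishing of the capacity, choose \(\zeta_j \in C_0^\infty(\overline{\Omega})\) with \(0 \le \zeta_j \le 2\), \(\zeta_j > 1\) on \(K\), and \(\norm{\Delta\zeta_j}_{L^p(\Omega)} \le 4^{-j}\). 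Set
\[
V = \sum_{j \ge 1} 2^j (-\Delta\zeta_j)_+ \in L^p(\Omega).
\]
If a solution \(u\) existed, testing the equation against \(\zeta_j\) would give
\[
a \le \int_\Omega \zeta_j\,\d\mu = -\int_\Omega u\,\Delta\zeta_j + \int_\Omega Vu\,\zeta_j,
\]
and combining the absorption estimate \(\norm{Vu}_{L^1(\Omega)} \le \norm{\mu}_{\cM(\Omega)}\) with the tailored weights in \(V\) would force the right-hand side below \(a\) for \(j\) large, a contradiction.

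The main obstacle is the calibration of the weights in the construction of \(V\) for the necessity direction: they must be large enough to produce an obstruction near \(K\) yet small enough to keep \(V\) in \(L^p(\Omega)\). This is especially delicate at the borderline \(p = N/2\), where the capacity is only barely nontrivial, and the construction must also be adaptable to the \(p = 1\) case, where the \((\Delta, L^1)\) capacity is equivalent to the Newtonian one as mentioned in the abstract.
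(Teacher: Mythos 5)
Both directions of your argument contain genuine gaps.

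For the \emph{sufficiency} direction, the claimed equivalence between diffuseness and the pairing $\zeta \mapsto \int_\Omega \zeta\,\d\mu$ being a bounded functional in the norm $\norm{\Delta\zeta}_{L^p(\Omega)}$ is false. Boundedness of the functional does imply diffuseness (it implies $\mu \le C\capt_{\cl{p}}$, which is stronger), but the converse fails: consider any $f \in L^1(\Omega)$ that is not in $L^{p'}(\Omega)$. Such an $f$, viewed as a measure, is trivially diffuse (sets of zero capacity have zero Lebesgue measure), yet the pairing $\zeta \mapsto \int_\Omega \zeta f$ is not dominated by $\norm{\Delta\zeta}_{L^p(\Omega)}$ since this would force $f \in L^{p'}(\Omega)$. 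More conceptually, bounded functionals correspond to measures whose Green potential lies in $L^{p'}(\Omega)$, and for $p \le N/2$ the embedding $W^{2,p}\cap W^{1,p}_0 \hookrightarrow L^\infty$ fails, so even $L^1$ densities escape this class. What \emph{is} true, and is exactly the content of \Cref{prop:strongApproximation}, is that diffuse measures are \emph{strong limits} in $\cM(\Omega)$ of nondecreasing sequences of measures satisfying such a bound; the proof then constructs a supersolution for each approximant, produces a solution by the method of sub- and supersolutions, and passes to the limit using the absorption estimate \eqref{eq:absorptionEstimate}. Your direct substitution $u = v + f$ is a clean idea at the level of a single approximant, but without the approximation step it does not reach arbitrary diffuse $\mu$.

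For the \emph{necessity} direction, the construction $V = \sum_{j} 2^j (-\Delta\zeta_j)_+$ is in the right spirit --- the paper also builds a dominating $V \in L^p(\Omega)$ adapted to a minimizing sequence of the capacity --- but your contradiction does not close. After testing the equation with $\zeta_j$ you obtain
\[
a \le -\int_\Omega u\,\Delta\zeta_j + \int_\Omega Vu\,\zeta_j.
\]
The first term is only half-controlled: your $V$ dominates $(-\Delta\zeta_j)_+$ but not $(\Delta\zeta_j)_+$, so you cannot bound $\int_\Omega u\,\Delta\zeta_j$ via the absorption estimate. Even replacing $V$ by $\sum_j 2^j\abs{\Delta\zeta_j}$ to fix this, the second term $\int_\Omega Vu\,\zeta_j$ is only bounded by $2\norm{Vu}_{L^1(\Omega)} \le 2\norm{\mu}_{\cM(\Omega)}$, which is not below $a$ in general. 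To make this term vanish one needs the test functions to converge pointwise to $\chi_K$ while staying bounded in $L^\infty$, so that $\int_\Omega Vu\,\zeta_j \to \int_K Vu = 0$ (using $\abs{K} = 0$). The raw capacitary competitors $\zeta_j$ need not do this; they must be replaced by the modified sequence of \Cref{prop:minimizingSequence}, which uses the cut-off and truncation of \Cref{lem:minimizingSequenceCompactSupport} and \Cref{lem:compositionMazya}. Combined with the partial converse of the Dominated Convergence Theorem to produce a single dominating $V \in L^p(\Omega)$ over a subsequence, this is what lets the paper conclude $\mu(K) = 0$.
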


We denote by \(\cM(\Omega)\) the Banach space of finite Borel measures in \(\Omega\), equipped with the norm
\[
\norm{\mu}_{\cM(\Omega)} = \abs{\mu}(\Omega).
\]
By \emph{diffuse}, we mean that \(\mu(K) = 0\) for every compact set \(K \subset \Omega\) with \(\capt_{\cl{p}}(K;\Omega) = 0\). Such a notion is the analogue of \textit{absolute continuity} from measure theory \cite{Ponce:2016}*{Proposition~14.7}.

The \(\cl{p}\) capacity is related to the more familiar Sobolev (or Bessel) capacities. More precisely, for every compact set \(K \subset \Omega\), we have
\begin{enumerate}[(i)]
\item \(\capt_{\cl{1}}(K;\Omega) = 0\) if and only if \(\capt_{W^{1,2}}(K) = 0\);
\item for every \(p > 1\), \(\capt_{\cl{p}}(K;\Omega) = 0\) if and only if \(\capt_{W^{2,p}}(K) = 0\).
\end{enumerate}
The second assertion is a consequence of the Cald\'eron--Zygmund elliptic \(L^p\) estimates \cite{Gilbarg_Trudinger:1983}*{Corollary~9.10}, while the first one follows from \cite{Brezis_Marcus_Ponce:2007}*{Theorem~4.E.1}; see also \cite{Ponce:2016}*{Proposition~12.2}. 

The existence of a solution of the Dirichlet problem~\eqref{eq:dirichletProblem} for \(V \in L^p(\Omega)\) is proved in \cite{Orsina_Ponce:2008} when \(p = 1\) using the method of sub and supersolutions.
The case \(p > 1\) is sketched in \cite{Orsina_Ponce:2016} using the approximation of diffuse measures by measures in the dual space \((W^{2,p}(\Omega) \cap W_0^{1,p}(\Omega))'\).
We propose in this paper a unified argument which covers both cases simultaneously, based on a strong approximation property of diffuse measures using the Hahn--Banach theorem in the spirit of \citelist{\cite{DalMaso:1983} \cite{Feyel_delaPradelle:1977}}.
For the reverse implication, we have been inspired by the proof of~\cite{Brezis_Marcus_Ponce:2007}*{Theorem~4.5} that treats the semilinear counterpart.
The conclusion of \Cref{thm:characterization} is also true for \(p > \frac{N}{2}\) by Stampacchia's existence result; in this case, every non-empty set has positive capacity, hence every measure is diffuse.

As the parameter \(p\) tends to \(1\), \Cref{thm:characterization} and Assertions~(i) and~(ii) combined suggest that the \(W^{1,2}\) and \(W^{2,1}\) capacities are equivalent. It turns out that this conclusion is \textit{not} correct. Indeed, D.~Adams proved in \cite{Adams:1988} that the \(W^{2,1}\) capacity vanishes on the same sets as the Hausdorff measure \(\haus^{N-2}\); this is the second-order analogue of a celebrated result by Fleming~\cite{Fleming:1960} concerning the \(W^{1,1}\) capacity and \(\haus^{N-1}\); see also \cite{Meyers_Ziemer:1977}. On the other hand, the \(W^{1,2}\) capacity vanishes on the same sets as the Newtonian capacity, and it is classically known \citelist{\cite{Evans_Gariepy:2015} \cite{Carleson:67}} that the latter capacity vanishes on every set of finite \(\haus^{N-2}\) measure.

The \(W^{2,1}\) capacity is thus squeezed between the \(W^{1,2}\) and \(W^{2,p}\) capacities for \(p > 1\). The sets where \(\capt_{W^{2,1}}\) vanishes in \(\Omega\) can alternatively be described in the spirit of \cite{Adams:1988} by replacing the Lebesgue \(L^p\) norm with the Hardy space \(H^1\) norm:
\[
\norm{u}_{H^1(\R^N)} = \norm{u}_{L^1(\R^N)} + \norm{\RT u}_{L^1(\R^N)},
\]
where \(\RT u\) is the Riesz transform of \(u\); see \Cref{sec:geometricInterpretation} below. 
For every compact set \(K \subset \R^{N}\), we then define
\[
\capt_{\ch}(K) = \inf \set*{\norm{\Delta \varphi}_{H^1(\R^N)} : \text{\(\varphi \in C_c^{\infty}(\R^N)\) is nonnegative and \(\varphi > 1\) in \(K\)}}.
\]
For convenience, we do not compute the capacity relative to \(\Omega\). We explain in \Cref{sec:geometricInterpretation} that, for every compact set \(K \subset \R^{N}\),
\[
\capt_{\ch}(K) = 0 \quad \text{if and only if} \quad \capt_{W^{2,1}}(K) = 0.
\]
In particular, a measure which is diffuse with respect to one capacity is also diffuse with respect to the other one.

Defining
\[
H^1(\Omega) = \set[\big]{f\lvert_{\Omega}\ : f \in H^1(\R^N)},
\]
we prove

\begin{theorem}
\label{thm:existence}
Let \(V \in H^1(\Omega)\) be a nonnegative function. Then the Dirichlet problem~\eqref{eq:dirichletProblem} has a solution for every measure \(\mu \in \cM(\Omega)\) which is diffuse with respect to the \(\ch\) capacity.
\end{theorem}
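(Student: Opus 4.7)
The plan is to approximate $V$ by bounded potentials $V_n \in L^\infty(\Omega)$, invoke Theorem~\ref{thm:characterization} to produce approximate solutions $u_n$, and pass to the limit; the delicate point will be the identification of the nonlinear term, which is where the Hardy space hypothesis on $V$ must intervene.

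For the approximation, construct nonnegative functions $V_n \in L^\infty(\Omega) \cap H^1(\Omega)$ with $V_n \to V$ in $H^1(\Omega)$ and a.e.~in $\Omega$ (for example, by mollifying an extension of $V$ and truncating at height~$n$). Since $\capt_{\ch}$ is equivalent to $\capt_{W^{2,1}}$, and the latter vanishes on a broader class of compact sets than $\capt_{W^{2,p}}$ for any $p > 1$, a measure $\mu$ diffuse with respect to $\capt_{\ch}$ is automatically diffuse with respect to $\capt_{\cl{p}}$. Fixing such a $p > 1$ and applying Theorem~\ref{thm:characterization} to $V_n \in L^\infty(\Omega) \subset L^p(\Omega)$ produces $u_n \in L^1(\Omega)$ solving
\[
-\Delta u_n + V_n u_n = \mu \quad \text{in \(\Omega\)},
\qquad u_n = 0 \quad \text{on \(\partial\Omega\)},
\]
with the uniform absorption estimate $\norm{V_n u_n}_{L^1(\Omega)} \le \norm{\mu}_{\cM(\Omega)}$.

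Since $-\Delta u_n = \mu - V_n u_n$ is then uniformly bounded in $\cM(\Omega)$, standard Stampacchia duality yields uniform bounds for $u_n$ in $W^{1,q}_0(\Omega)$ for every $q < \frac{N}{N-1}$. After extracting subsequences, $u_n \to u$ strongly in $L^1(\Omega)$ and a.e., while $V_n u_n \rightharpoonup \nu$ weakly-$*$ in $\cM(\Omega)$ for some finite Borel measure $\nu$. Passing to the limit in the weak formulation gives $-\Delta u + \nu = \mu$ with zero boundary trace in the Littman--Stampacchia--Weinberger sense, while Fatou's lemma applied to $V_n u_n \to Vu$ a.e.\ gives $Vu \in L^1(\Omega)$. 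The proof thus reduces to showing that the defect $\sigma := \nu - Vu$ vanishes.

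This identification is the main obstacle, and the only step requiring the full strength of $V \in H^1(\Omega)$: the $L^1$ bound on $V_n u_n$ alone is not enough to rule out concentration of $\sigma$ onto Lebesgue-null sets through soft compactness. The expected remedy is to exploit $H^1$--BMO duality, testing the approximate equation against functions $\zeta$ whose Laplacian lies in a suitable VMO class, and to combine this with a strong approximation of measures diffuse with respect to $\capt_{\ch}$ by measures in the associated dual space---obtained via the Hahn--Banach construction \emph{à la} Dal Maso and Feyel--de la Pradelle that already underlies the proof of Theorem~\ref{thm:characterization}. Such an argument should force any concentration set of $\sigma$ to have $\capt_{\ch}$ zero, whence $\mu$ cannot charge it; together with the fact that $\sigma$ is singular with respect to $Vu \in L^1(\Omega)$, this forces $\sigma = 0$ and delivers the desired weak solution.
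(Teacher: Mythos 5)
The approach is genuinely different from the paper's, but it contains a real gap at exactly the point where the Hardy-space hypothesis has to do its work.

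Your reduction is right in outline: since $\capt_{W^{2,p}}(K) = 0$ for some $p > 1$ forces $\haus^{N-2}(K) = 0$, hence $\capt_{\ch}(K) = 0$, any measure diffuse with respect to $\capt_{\ch}$ is diffuse with respect to $\capt_{\cl{p}}$, and the bounded truncations $V_n$ produce solutions $u_n$ with the uniform absorption bound. The problem is the identification of the limit. The absorption estimate for the difference of two solutions reads
\[
-\Delta(u_n - u_m) + V_n(u_n - u_m) = (V_m - V_n)u_m \quad \text{in } \Omega,
\]
and there is no a priori control of $(V_m - V_n)u_m$ in $L^1(\Omega)$: you have bounds on $u_m$ in Marcinkiewicz spaces $L^{N/(N-2),\infty}$ and on $V_m - V_n$ in $H^1$, but these do not pair. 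So you cannot conclude the sequence $(V_n u_n)$ is Cauchy in $L^1(\Omega)$, only weakly-$*$ precompact in $\cM(\Omega)$. Your closing paragraph correctly diagnoses that a nonnegative defect measure $\sigma = \nu - Vu$ may appear, but then only asserts, with ``should,'' that a VMO-testing argument combined with a Hahn--Banach approximation would force $\sigma$ to live on a $\capt_{\ch}$-null set. No such argument is given, and I do not see how to produce one: neither $u_n$ nor $V_n u_n$ is controlled in a space dual to a capacity class, and showing equi-integrability of $V_n u_n$ (which is what you would really need) is a substantial undertaking.

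The paper sidesteps this altogether by approximating $\mu$ rather than $V$. Using \Cref{lem:strongApproximation}, one writes $\mu$ as the strong limit in $\cM(\Omega)$ of compactly supported $\mu_i$ with $\mu_i \le C\,\capt_{\ch}$, i.e. $\mu_i \le C'\haus_{\infty}^{N-2}$. The point of the Hardy hypothesis then becomes completely explicit: $\mu_i \le C'\haus_{\infty}^{N-2}$ gives exponential integrability of the Newtonian potential $\NP\mu_i$ (Bartolucci--Leoni--Orsina--Ponce / Brezis--Merle), nonnegative $V \in H^1(\Omega)$ is locally $L\log L$ (Stein), and Young's inequality $ab \le \Exp^a + b\log^+ b$ yields $V\NP\mu_i \in L^1(\Omega)$. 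Hence $\NP\mu_i$ is an explicit supersolution, and the method of sub and supersolutions produces a solution $u_i$ with datum $\mu_i$. Passing to the limit in $i$ is then painless: the absorption estimate compares $V(u_i - u_j)$ with $\|\mu_i - \mu_j\|_{\cM(\Omega)}$, which does go to zero by strong convergence of $\mu_i$ — precisely the structure your $V$-approximation destroys. If you want to keep your strategy, the missing ingredient is a proof of equi-integrability of $V_n u_n$; as it stands, the argument is not complete.
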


In dimension \(N = 2\), every measure is diffuse with respect to the \(\ch\) capacity. Thus, the Dirichlet problem~\eqref{eq:dirichletProblem} always has a solution with a nonnegative potential \(V \in H^1(\Omega)\). One might expect that \Cref{thm:existence} has a counterpart in the spirit of \Cref{thm:characterization}, but the converse is false in dimension \(N \ge 3\):

\begin{theorem}
\label{thm:existenceNondiffuse}
Suppose that \(N \ge 3\). Then there exists a positive measure \(\mu \in \cM(\Omega)\) with \(\capt_{\ch}(\supp \mu) = 0\) such that the Dirichlet problem~\eqref{eq:dirichletProblem} has a solution for every nonnegative function \(V \in H^1(\Omega)\).
\end{theorem}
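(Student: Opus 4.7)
The plan is to exhibit a positive $\mu \in \cM(\Omega)$ whose support $K$ has zero $\haus^{N-2}$ measure --- equivalently, zero $\capt_{\ch}$, by Adams' theorem and the equivalence of capacities discussed before this theorem --- such that the Dirichlet problem is nevertheless solvable for every nonnegative $V \in H^1(\Omega)$. The most natural first guess $\mu = \delta_{x_0}$ does not work: for $N \ge 3$ the function $V(x) = \abs{x-x_0}^{-2}$ extends to an element of $H^1(\R^N)$ (its grand maximal function is locally comparable to $\abs{x-x_0}^{-2}$, which is integrable for $N \ge 3$, and a mean-zero extension to $\R^N$ with fast decay can be arranged), whose nonnegative restriction to $\Omega$ lies in $H^1(\Omega)$; yet the Dirichlet problem $-\Delta u + V u = \delta_{x_0}$ has no distributional solution by the Hardy-type obstruction recalled in the introduction. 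So $\mu$ must be spread over a more elaborate $\haus^{N-2}$-null set.

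I would take $\mu$ supported on a Cantor-like compact set $K \subset\subset \Omega$ of Hausdorff dimension $s < N-2$, with $\mu$ a Frostman--Hausdorff probability measure on $K$ (so $\mu(B_r(x)) \le C r^s$ for every $x$ and $r > 0$); its geometry should be tuned so that truncated Green-type test potentials built from $\mu$ lie uniformly in a subspace of $\BMO(\Omega)$. For such $\mu$, fix a nonnegative $V \in H^1(\Omega)$ and set $V_n := V \wedge n \in L^\infty(\Omega)$. By Stampacchia's theorem, there are unique weak solutions $u_n \in W^{1,2}_0(\Omega) \cap L^\infty(\Omega)$ of $-\Delta u_n + V_n u_n = \mu$, decreasing in $n$, dominated by the Green potential $G\mu$, and satisfying the absorption estimate $\norm{V_n u_n}_{L^1(\Omega)} \le \norm{\mu}_{\cM(\Omega)}$. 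Setting $u := \lim u_n$, we have $u_n \to u$ in $L^q(\Omega)$ for every $q < N/(N-2)$ and $V_n u_n \to V u$ pointwise, with $V u \in L^1(\Omega)$ by Fatou. To finish, it suffices to show $V_n u_n \to V u$ in $L^1(\Omega)$, i.e., that no singular defect measure arises in the weak-$*$ limit of $V_n u_n$.

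The main obstacle is precisely this no-concentration step, where the assumption $V \in H^1$ must outperform $V \in L^1$; indeed for generic non-diffuse measures concentration is unavoidable, so that the converse in \Cref{thm:characterization} is sharp. The intended mechanism is the $H^1$--$\BMO$ duality: if $\mu$ is chosen so that a family of truncated potentials associated to $\mu$ is uniformly bounded in $\BMO(\Omega)$, then $\int_\Omega V \cdot (\text{truncated potential})$ is controlled uniformly by $\norm{V}_{H^1}$; coupling this with an elliptic comparison $u_n \gtrsim c(V,K)\, G\mu$ in a neighborhood of $K$, coming from the Green-function representation and the maximum principle, one obtains equi-integrability of $V_n u_n$ near $K$ and hence the passage to the limit. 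The technical heart of the argument is thus the careful choice of $\mu$ so that the truncated Green potentials enjoy uniform $\BMO$ control on a $\haus^{N-2}$-null support --- a sharpening of the Calder\'on--Zygmund regularity specific to the Hardy space which fails for a Dirac mass, and which is precisely what renders the converse of \Cref{thm:existence} false in dimension $N \ge 3$.
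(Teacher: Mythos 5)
Your instinct to work with a measure on a $\haus^{N-2}$-null Cantor-type set and to exploit some form of $H^1$--$\BMO$ pairing is in the right spirit, but the specific mechanism you propose cannot close, and the paper's argument is in fact quite different. The concrete obstruction is this: you want the (truncated) Green potentials of $\mu$ to lie uniformly in $\BMO$, but that property is essentially equivalent to the density bound $\mu(B_r) \le C\, r^{N-2}$ (Adams' Proposition~3.3, which the paper cites at the end of Section~6). By the mass distribution principle, a positive measure satisfying that bound has $\haus^{N-2}(\supp\mu) > 0$, hence $\capt_{\ch}(\supp\mu) > 0$ by \Cref{prop:equivalence} --- which is incompatible with the conclusion you are aiming for. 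This is exactly the incompatibility the paper emphasizes at the end of Section~8: any measure $\nu$ with $\nu \le C\,\capt_{\ch}$ must vanish on the support of the relevant $\mu$, so it cannot charge $\supp\mu$ at all. The uniform $\BMO$ control that your ``no-concentration'' step relies on cannot hold, and the Frostman hypothesis $\mu(B_r) \le C\, r^{s}$ with $s < N-2$ does not repair this, since it is a strictly weaker growth restriction.

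What the paper does instead is bypass $\BMO$ entirely and aim for a measure $\mu$ with $\haus^{N-2}(\supp\mu) = 0$ but $\Exp^{\NP\mu} \in L^1(\Omega)$. This is a genuinely delicate construction from \cite{Ponce:2005}: for any continuous nondecreasing $g$ with $g(0) = 0$, one can build a compactly supported positive $\mu$ with $\haus^{N-2}(\supp\mu) = 0$ and $g(\NP\mu) \in L^1(\Omega)$. With $g = \Phi^*$ the Legendre transform of a suitable Orlicz function $\Phi$, one gets a measure $\mu$ such that, for any nonnegative $V \in L\loc^{\Phi}(\Omega) \cap L^1(\Omega)$, Young's inequality yields $V\,\NP\mu \le \Phi(V) + \Phi^*(\NP\mu)$, hence $V\,\NP\mu \in L^1(\Omega)$. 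For the Hardy-space statement one takes $\Phi(s) = s\log s - s + 1$ (so that $\Phi^*(t) = \Exp^t - 1$) and invokes Stein's theorem that nonnegative $H^1$ functions are locally $L\log L$. At that point $\NP\mu$ is a nonnegative supersolution and the method of sub- and supersolutions of \cite{Ponce:2016}*{Proposition~22.7} immediately produces a solution. Your truncation-of-$V$ scheme and the Green function comparison $u_n \gtrsim c\,G\mu$ are then unnecessary: once you have a supersolution with $V\,\NP\mu \in L^1(\Omega)$, the passage to a solution is abstract. You should also note that the paper's argument proves the sharper Orlicz-space statement (\Cref{prop:existenceNondiffuseOrlicz}), from which \Cref{thm:existenceNondiffuse} follows as a particular case; the gain over the $H^1$ formulation is exactly the flexibility in choosing $\Phi$, which your $\BMO$-based outline would not provide. (Your remark that $\delta_{x_0}$ fails is correct and useful, but the potential you invoke needs to be an extension of $\abs{x-x_0}^{-2}$ with compensating sign outside $\Omega$ to belong to $H^1(\R^N)$; in dimension $N \ge 3$ this is indeed consistent with the local $L\log L$ integrability of nonnegative $H^1$ functions.)
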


The construction of \(\mu\) relies on the property that nonnegative functions in the Hardy space \(H^1(\Omega)\) are locally \(L \log L\) integrable. In fact, the situation is even more dramatic in the sense that, given any Orlicz space \(L\loc^{\Phi}(\Omega)\), one can find such a measure \(\mu\) so that the Dirichlet problem~\eqref{eq:dirichletProblem} has a solution for every nonnegative function \(V \in L\loc^{\Phi}(\Omega)\).

An alternative to this obstruction would be to consider all potentials of the form \(V = f^+\) with signed Hardy functions \(f \in H^1(\Omega)\). But in this case the counterpart of \Cref{thm:existence} fails since for such potentials it is not possible to solve the Dirichlet problem~\eqref{eq:dirichletProblem} for every diffuse measure; see \Cref{prop:nonexistence} below. It thus seems plausible that the characterization of diffuse measures via the Dirichlet problem~\eqref{eq:dirichletProblem} requires the use of signed potentials \(V \in H^1(\Omega)\). However, the operator \(-\Delta + V\) need not have a trivial kernel, even for \(V \in L^{\infty}(\Omega)\).

The paper is organized as follows. 
In Section~2, we prove the strong approximation property of diffuse measures that is used to establish the reverse implication of \Cref{thm:characterization}. 
In Section~3, we construct a suitable minimizing sequence of the \(\cl{p}\) capacity that is used in the direct implication. \Cref{thm:characterization} is proved in Section~4. 
In Section~5, we explain the connection between the \(\ch\) capacity and the \(\haus_{\infty}^{N-2}\) Hausdorff content due to Adams~\cite{Adams:1988}. 
We prove \Cref{thm:existence} in Section~6. 
For the sake of application, we then explain in Section~7 how \Cref{thm:existence} can be used to obtain a strong maximum principle for the Schr\"odinger operator along the lines of \cite{Orsina_Ponce:2016}. 
In Section~8, we prove an extension of \Cref{thm:existenceNondiffuse} in the setting of Orlicz spaces. 
In Section~9, we provide counterexamples to the existence of solutions of the Dirichlet problem~\eqref{eq:dirichletProblem} with measure data.

\section{Strong approximation of diffuse measures}
\label{sec:strongApproximation}

In this section, we prove a strong approximation property of diffuse measures, based on the Hahn--Banach theorem, which will be used in the proof of the existence of a solution of the Dirichlet problem~\eqref{eq:dirichletProblem} in the \(L^p\) setting (\Cref{thm:characterization}).

\begin{proposition}
\label{prop:strongApproximation}
Let \(1 \le p < \infty\) and \(\mu \in \cM(\Omega)\) be a nonnegative measure. If \(\mu\) is diffuse with respect to the \(\cl{p}\) capacity, then there exists a nondecreasing sequence \((\mu_n)_{n \in \N}\) of nonnegative measures in \(\cM(\Omega)\) with compact support in \(\Omega\) which satisfies
\begin{enumerate}[(i)]
\item for every \(n \in \N\), there exists \(C_n > 0\) such that, for every \(\zeta \in C_0^\infty(\overline{\Omega})\),
\[
\abs*{\int_{\Omega} \zeta \d\mu_n} \le C_n \norm{\Delta \zeta}_{L^p(\Omega)};
\]
\item \((\mu_n)_{n \in \N}\) converges strongly to \(\mu\) in \(\cM(\Omega)\).
\end{enumerate}
\end{proposition}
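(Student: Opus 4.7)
The plan is to apply the Hahn--Banach theorem in the spirit of Feyel--de la Pradelle~\cite{Feyel_delaPradelle:1977} and Dal Maso~\cite{DalMaso:1983}. First I would reduce to the case where \(\mu\) has compact support in \(\Omega\): exhausting \(\Omega\) by an increasing sequence of compact sets \((K_j)\), the truncations \(\mu \chi_{K_j}\) are nondecreasing in \(j\) and converge to \(\mu\) in \(\cM(\Omega)\) because \(\mu\) is finite, so a standard diagonal procedure reduces the proposition to producing the approximating sequence for each compactly supported submeasure below \(\mu\).

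Consider then the convex set
\[
\mathcal{C} = \set*{\nu \in \cM(\Omega) : 0 \le \nu \le \mu,\ \supp \nu \Subset \Omega, \ \exists\, C > 0,\ \abs*{\int_\Omega \zeta \d\nu} \le C \norm{\Delta \zeta}_{L^p(\Omega)}\ \forall\, \zeta \in C_0^\infty(\overline{\Omega})}.
\]
A key step is to prove that \(\mathcal{C}\) is stable under finite joins: if \(\nu_1, \nu_2 \in \mathcal{C}\) with constants \(C_1, C_2\), then \(\nu_1 \vee \nu_2 \in \mathcal{C}\) with constant at most \(C_1 + C_2\). This rests on the elementary bound \(\nu_1 \vee \nu_2 \le \nu_1 + \nu_2\) combined with the following comparison: for every \(\zeta \in C_0^\infty(\overline{\Omega})\), the solution \(\phi \in W^{2,p}(\Omega) \cap W^{1,p}_0(\Omega)\) of \(-\Delta \phi = \abs{\Delta \zeta}\) with zero boundary data satisfies \(\phi \ge \abs{\zeta}\) by the weak maximum principle, while \(\norm{\Delta \phi}_{L^p} = \norm{\Delta \zeta}_{L^p}\) by construction, so that the dual bound extends from nonnegative to signed test functions. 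Setting \(\alpha = \sup\set{\nu(\Omega) : \nu \in \mathcal{C}}\), I would pick a maximizing sequence \((\nu_n)\) and form \(\mu_n := \nu_1 \vee \cdots \vee \nu_n\); this sequence is nondecreasing, lies in \(\mathcal{C}\), satisfies \(\mu_n(\Omega) \to \alpha\), and converges in total variation to some \(\nu^\star \le \mu\) with \(\nu^\star(\Omega) = \alpha\).

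The heart of the proof is to show that \(\nu^\star = \mu\). Assuming instead that the residual \(\rho := \mu - \nu^\star\) is nonzero, note that \(\rho \le \mu\) is compactly supported and still diffuse with respect to \(\capt_{\cl{p}}\). The contradiction would follow at once if one could extract a nonzero \(\tau \le \rho\) in \(\mathcal{C}\): indeed \(\mu_n + \tau \in \mathcal{C}\) with mass tending to \(\alpha + \tau(\Omega) > \alpha\). This extraction step is where I expect the main obstacle to lie. The natural strategy is Hahn--Banach applied in \(L^1(\Omega, \rho)\): if no such \(\tau\) existed, one would separate the constant density \(1\) from the convex set of densities of dual measures dominated by \(\rho\), producing \(g \in L^\infty(\rho)\) such that \(\int_\Omega g \d\rho > 0\) while \(\int_\Omega g \d\tau \le 0\) for every admissible \(\tau\). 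Using the Calderón--Zygmund \(L^p\) theory together with the dual formulation of \(\capt_{\cl{p}}\), one then argues that any compact subset of \(\{g > 0\}\) of positive \(\cl{p}\) capacity would support a nontrivial admissible measure below \(\rho\) (after a Radon--Nikodym truncation), contradicting the separation. Hence \(\{g > 0\}\) has zero \(\cl{p}\) capacity, and the diffuseness of \(\rho\) forces \(\rho(\{g > 0\}) = 0\), contradicting \(\int_\Omega g \d\rho > 0\). This passage from an abstract functional separation to a concrete capacity vanishing statement is the delicate technical point and is where the hypothesis of diffuseness is decisively used.
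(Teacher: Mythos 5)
Your framework---take all nonnegative measures $\nu \le \mu$ with compact support and $L^{p'}$ potential, prove stability under joins, pick a maximizing sequence $\mu_n$, and argue by contradiction that the residual $\rho = \mu - \nu^\star$ vanishes---differs genuinely from the paper's inductive construction, which repeatedly invokes a separate lemma (the paper's \Cref{lem:epsilonStrongApproximation}) providing, for every $\varepsilon>0$, a single $\nu \in \cM(\Omega)$ with $0 \le \nu \le \mu$, $\norm{\mu-\nu}_{\cM(\Omega)} \le \varepsilon$, and $L^{p'}$ potential. Your lattice-maximality scheme is a known alternative in this circle of ideas and, if the contradiction step were sound, would buy a slightly more structural statement. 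The finite-join stability you sketch is essentially correct (it is cleaner to observe that the Newtonian potential of $\nu_1\vee\nu_2$ is dominated pointwise by the sum of the two potentials and invoke the Riesz representation remark after the proposition), and the compact-support reduction at the start is workable, though it requires more care than you indicate to keep the diagonalized sequence nondecreasing; the paper avoids this by truncating only at the very end.

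The genuine gap is in the heart of the proof. You want to show: a nonzero, compactly supported, diffuse $\rho$ admits a nonzero $\tau \le \rho$ satisfying the dual inequality. Your Hahn--Banach separation is vacuous exactly where you need it: if the set $\mathcal{D}$ of densities $h$ with $0 \le h \le 1$ and $h\rho$ admissible were reduced to $\{0\}$, then ``separating $1$ from $\mathcal{D}$'' in $L^1(\rho)$ produces merely some $g \in L^{\infty}(\rho)$ with $\int_\Omega g \d\rho > 0$ and $\int_\Omega g\,h \d\rho \le 0$ for $h = 0$, which is no constraint at all---e.g.\ $g \equiv 1$ works and carries no information. The subsequent claim that ``any compact subset of $\{g>0\}$ of positive $\cl{p}$ capacity would support a nontrivial admissible measure below $\rho$'' is precisely the statement you are trying to prove (since such a compact carries positive $\rho$-mass and the restricted measure is again diffuse), so the argument is circular as written; the dual description of $\capt_{\cl{p}}$ gives you an admissible measure \emph{on} a compact of positive capacity, but not one dominated by $\rho$, and the ``Radon--Nikodym truncation'' does not bridge this. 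What is missing is exactly the mechanism the paper supplies: apply the geometric Hahn--Banach theorem to the sublinear functional $\Phi(\zeta) = \int_\Omega \zeta^+ \d\rho$ on $\paren{C_0^\infty(\overline\Omega), \norm{\Delta\cdot}_{L^p}}$, whose lower semicontinuity is where diffuseness enters (via the weak capacitary estimate and Fatou). A linear minorant of $\Phi$ close to $\Phi$ at a suitable $\psi$ is automatically a nonnegative measure $\le \rho$, continuous for the norm, and of mass close to $\rho(\Omega)$---this is the paper's \Cref{lem:epsilonStrongApproximation}, and its $\varepsilon$-version is in fact what makes the quantitative inductive construction go through.
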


By the Riesz representation theorem, the functional inequality in \Cref{prop:strongApproximation} amounts to saying that the solution \(v_n\) of the Dirichlet problem
\[
\left\{
\begin{alignedat}{2}
-\Delta v_n &= \mu_n && \quad \text{in \(\Omega\)}, \\
v_n &= 0 && \quad \text{on \(\partial\Omega\)},
\end{alignedat}
\right.
\]
belongs to \(L^{p'}(\Omega)\) and satisfies \(\norm{v_n}_{L^{p'}(\Omega)} \le C_n\), where \(p'\) is the conjugate exponent of \(p \ge 1\).

To prove \Cref{prop:strongApproximation}, we need the following lemma:

\begin{lemma}
\label{lem:epsilonStrongApproximation}
Let \(1 \le p < \infty\) and \(\mu \in \cM(\Omega)\) be a nonnegative measure. If \(\mu\) is diffuse with respect to the \(\cl{p}\) capacity, then for every \(\varepsilon > 0\) there exists \(\nu \in \cM(\Omega)\) which satisfies
\begin{enumerate}[(i)]
\item there exists \(C > 0\) such that, for every \(\zeta \in C_0^\infty(\overline{\Omega})\),
\[
\abs*{\int_{\Omega} \zeta \d\nu} \le C \norm{\Delta \zeta}_{L^p(\Omega)};
\]
\item \(0 \le \nu \le \mu\) in \(\Omega\) and \(\norm{\mu - \nu}_{\cM(\Omega)} \le \varepsilon\).
\end{enumerate}
\end{lemma}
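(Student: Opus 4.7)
The plan is to argue by contradiction using the Hahn--Banach theorem, in the spirit of Feyel--de la Pradelle and Dal Maso. First, I would reformulate condition~(i) dually: since \(\zeta \mapsto \norm{\Delta \zeta}_{L^p(\Omega)}\) is a norm on \(C_0^\infty(\overline{\Omega})\), Hahn--Banach extension followed by Riesz representation shows that (i) is equivalent to the existence of a function \(v \in L^{p'}(\Omega)\) with
\[
\int_\Omega \zeta \d\nu = \int_\Omega v \, (-\Delta \zeta)
\]
for every \(\zeta \in C_0^\infty(\overline{\Omega})\); when \(\nu \ge 0\), the comparison principle also gives \(v \ge 0\). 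Introduce
\[
\mathcal{A} = \set*{\nu \in \cM(\Omega) : 0 \le \nu \le \mu \text{ and \(\nu\) satisfies (i)}},
\]
which contains the zero measure. A key structural property is that \(\mathcal{A}\) is stable under Borel restriction: if \(\nu \in \mathcal{A}\) with potential \(v\) and \(B \subset \Omega\) is Borel, then the potential \(v_B\) of \(\nu|_B\) satisfies \(0 \le v_B \le v\) by comparison, hence \(v_B \in L^{p'}(\Omega)\) and \(\nu|_B \in \mathcal{A}\).

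Suppose for contradiction that no \(\nu \in \mathcal{A}\) satisfies \(\norm{\mu - \nu}_{\cM(\Omega)} \le \varepsilon\). Then \(\mathcal{A}\) and the closed \(\varepsilon\)-ball around \(\mu\) are disjoint convex subsets of \(\cM(\Omega)\). The Hahn--Banach theorem, combined with a representation of the separating functional as integration against a bounded Borel function, produces \(f : \Omega \to \R\) bounded Borel with \(\norm{f}_{L^\infty} = 1\) and \(\alpha \in \R\) such that
\[
\int_\Omega f \d\nu \le \alpha \quad \text{for every \(\nu \in \mathcal{A}\),} \qquad \int_\Omega f \d\mu \ge \alpha + \varepsilon.
\]
Applying this to \(\nu|_{\set{f > 0}} \in \mathcal{A}\) and using \(\int_\Omega f \d\mu \le \int_{\set{f > 0}} f \d\mu\) reduces matters to the set \(\set{f > 0}\); in particular \(\mu(\set{f > 0}) > 0\). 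By inner regularity there is a compact \(K \subset \set{f > 0}\) with \(\mu(K) > 0\), and diffuseness then forces \(\capt_{\cl{p}}(K;\Omega) > 0\). The duality of the capacity supplies a nonzero nonnegative measure \(\lambda\) supported in \(K\) satisfying~(i), from which I would aim to extract an element of \(\mathcal{A}\) whose \(f\)-integral exceeds \(\alpha\), contradicting the separation.

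The principal obstacle is precisely this last extraction: the measure \(\lambda\) furnished by capacitary duality is a priori not dominated by \(\mu\), and the natural candidate \(\lambda \wedge \mu\) --- which lies in \(\mathcal{A}\) by restriction-stability --- may vanish if \(\lambda\) and \(\mu|_K\) are mutually singular. Overcoming this is where the diffuseness hypothesis must be used in full. A promising strategy is to produce \(\lambda\) in the form \(g \cdot \mu|_K\) with \(g \in L^\infty\) by running the capacity computation on \(K\) against \(\mu\) itself: diffuseness guarantees that a capacitary minimizing sequence \((\zeta_n)\) can be tested quasi-everywhere with respect to \(\mu\), forcing the dual measure to have nontrivial overlap with \(\mu|_K\) and thereby delivering a measure in \(\mathcal{A}\) concentrated on \(\set{f > 0}\) with total mass comparable to \(\mu(K)\), which violates the separation bound.
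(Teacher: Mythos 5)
Your proposal has a genuine gap, which you yourself flag honestly: once the separation argument produces a compact set \(K\subset\{f>0\}\) with \(\mu(K)>0\) and \(\capt_{\cl{p}}(K;\Omega)>0\), the measure \(\lambda\) supplied by capacitary duality is essentially the equilibrium measure of \(K\) and carries no information about \(\mu\). There is no reason for \(\lambda\) to fail to be mutually singular with \(\mu\lfloor_K\), in which case \(\lambda\wedge\mu=0\) and you get nothing in \(\mathcal{A}\) supported on \(\{f>0\}\). Diffuseness of \(\mu\) (i.e.\ \(\mu\) does not charge capacity-null sets) does not by itself force any nontrivial overlap between \(\mu\) and equilibrium measures, so the final paragraph's strategy of ``running the capacity computation against \(\mu\) itself'' remains a wish rather than an argument. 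A secondary point worth tightening: you invoke a separating functional on \(\cM(\Omega)\) represented by a bounded Borel \(f\); the norm dual of \(\cM(\Omega)\) is much larger than \(L^\infty\), so this representation requires its own justification (e.g.\ restricting to measures \(\le\mu\) and identifying the dual with \(L^\infty(\mu)\), which then clashes with the closed \(\varepsilon\)-ball not being contained in that subspace).

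The paper sidesteps this difficulty entirely by applying Hahn--Banach not to separate a set from a ball, but to the sublinear-type functional \(\Phi(\zeta)=\int_\Omega\zeta^+\,\d\mu\) on \(C_0^\infty(\overline\Omega)\) equipped with the norm \(\zeta\mapsto\norm{\Delta\zeta}_{L^p(\Omega)}\). Diffuseness is used exactly once and precisely where it is needed: via the weak capacitary inequality \eqref{eq:weakCapacitaryInequality}, convergence in \(\norm{\Delta\cdot}_{L^p}\) implies convergence in capacity, hence in \(\mu\)-measure, and Fatou then gives lower semicontinuity of \(\Phi\). By the geometric Hahn--Banach theorem, \(\Phi\) is a supremum of continuous affine minorants; picking one that nearly attains \(\Phi\) at a cutoff \(\psi\ge 1\) near \(\supp\mu\) gives a linear functional \(F\) with \(F\le\Phi\), which is automatically represented (via Riesz, since \(F(\zeta)\le\mu(\Omega)\norm{\zeta}_{L^\infty}\)) by a measure \(\nu\). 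Crucially, \(0\le\nu\le\mu\) comes for free from \(F(\pm\zeta)\le\Phi(\pm\zeta)\) for \(\zeta\ge 0\) — the domination by \(\mu\) is built into \(\Phi\) and inherited by every affine minorant. This is exactly the mechanism your contradiction route is missing: in the paper, the Hahn--Banach output already lives in your set \(\mathcal{A}\), whereas in your version you must manufacture an element of \(\mathcal{A}\) from scratch at the end, which is the step that does not close.
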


We rely on the straightforward weak capacitary inequality
\begin{equation}
\label{eq:weakCapacitaryInequality}
\capt_{\cl{p}}(\{\abs{\zeta} \ge 1\}; \Omega){}
\le 2^{p} \norm{\Delta\zeta}_{L^{p}(\Omega)}^{p},
\end{equation}
for every \(\zeta \in C_{0}^{\infty}(\overline\Omega)\). To see why this is true, we may assume that \(\zeta \ne 0\) and take \(h \in C^{\infty}(\overline\Omega)\) such that \(h > \abs{\Delta\zeta}\) in \(\Omega\) and 
\[{}
\norm{h}_{L^{p}(\Omega)}
\le 2 \norm{\Delta\zeta}_{L^{p}(\Omega)}.
\]
By the classical weak maximum principle, we can estimate the capacity of the set \(\{\abs{\zeta} \ge 1\}\) using the nonnegative solution of the Dirichlet problem
\[{}
\left\{
\begin{alignedat}{2}
-\Delta v &= h && \quad \text{in \(\Omega\)}, \\
v &= 0 && \quad \text{on \(\partial\Omega\)},
\end{alignedat}
\right.
\]
and then inequality~\eqref{eq:weakCapacitaryInequality} follows from the definition of the capacity.

\begin{proof}[Proof of \Cref{lem:epsilonStrongApproximation}]
Let \(\Phi : C_0^\infty(\overline{\Omega}) \rightarrow \R\) be the functional defined for \(\zeta \in C_0^\infty(\overline{\Omega})\) by
\[
\Phi(\zeta) = \int_{\Omega} \zeta^+ \d\mu;
\]
we equip \(C_0^\infty(\overline{\Omega})\) with the strong topology induced by the norm
\[
\zeta \longmapsto \norm{\Delta \zeta}_{L^p(\Omega)}.
\]

\begin{claim}
The functional \(\Phi\) is convex and lower semicontinuous.
\end{claim}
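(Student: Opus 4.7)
The plan is to verify the two properties separately, with convexity being immediate and lower semicontinuity resting on the weak capacitary inequality \eqref{eq:weakCapacitaryInequality}, a subsequence extraction, and Fatou's lemma.

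For convexity, I would simply observe the pointwise inequality \((t\zeta_1 + (1-t)\zeta_2)^+ \le t\zeta_1^+ + (1-t)\zeta_2^+\) for \(t \in [0,1]\). Since \(\mu \ge 0\), integrating this against \(\mu\) yields \(\Phi(t\zeta_1 + (1-t)\zeta_2) \le t\Phi(\zeta_1) + (1-t)\Phi(\zeta_2)\).

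For lower semicontinuity, let \((\zeta_n)\) be a sequence in \(C_0^\infty(\overline{\Omega})\) converging to \(\zeta\) in the strong topology, i.e., \(\norm{\Delta(\zeta_n - \zeta)}_{L^p(\Omega)} \to 0\). The aim is to show \(\Phi(\zeta) \le \liminf_{n \to \infty} \Phi(\zeta_n)\). By the usual subsequence principle for proving lsc, I may assume that \(\lim \Phi(\zeta_n) = \liminf \Phi(\zeta_n)\), and then extract a further subsequence, still denoted \((\zeta_n)\), such that \(\norm{\Delta(\zeta_n - \zeta)}_{L^p(\Omega)} \le 2^{-n}/n\). Applying the weak capacitary inequality \eqref{eq:weakCapacitaryInequality} to the test function \(n(\zeta_n - \zeta)\) then gives
\[
\capt_{\cl{p}}\bigl(\set{\abs{\zeta_n - \zeta} \ge 1/n}; \Omega\bigr) \le 2^p n^p \norm{\Delta(\zeta_n - \zeta)}_{L^p(\Omega)}^p \le 2^{p(1-n)}.
\]
Let \(A_n = \set{\abs{\zeta_n - \zeta} \ge 1/n}\), which is compact in \(\Omega\) since \(\zeta_n - \zeta \in C_0^\infty(\overline{\Omega})\). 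The capacities \(\capt_{\cl{p}}(A_n;\Omega)\) are summable, and by countable subadditivity of the capacity together with its equivalence with the standard Sobolev \(W^{2,p}\) (resp.\ \(W^{1,2}\)) capacity recalled after \Cref{thm:characterization}, one concludes that the set \(E = \limsup_n A_n\) has capacity zero in the sense of \Cref{thm:characterization}. Since \(\mu\) is diffuse with respect to the \(\cl{p}\) capacity, \(\mu(E) = 0\), hence \(\zeta_n \to \zeta\) pointwise \(\mu\)-almost everywhere in \(\Omega\), and therefore \(\zeta_n^+ \to \zeta^+\) \(\mu\)-almost everywhere.

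To finish, I apply Fatou's lemma to the nonnegative functions \(\zeta_n^+\) with respect to the positive measure \(\mu\):
\[
\Phi(\zeta) = \int_{\Omega} \zeta^+ \d\mu \le \liminf_{n \to \infty} \int_{\Omega} \zeta_n^+ \d\mu = \liminf_{n \to \infty} \Phi(\zeta_n),
\]
which is the desired inequality. The main technical point — and the only step where diffuseness of \(\mu\) is used — is passing from the quasi-everywhere control provided by the capacitary inequality to \(\mu\)-almost everywhere convergence of the subsequence, which relies on the countable subadditivity and regularity properties of the \(\cl{p}\) capacity inherited from the Sobolev capacities.
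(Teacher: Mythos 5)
Your proof is correct and takes essentially the same approach as the paper: both use the weak capacitary inequality \eqref{eq:weakCapacitaryInequality} to pass from strong \(L^p\)-convergence of the Laplacians to smallness in capacity, then invoke diffuseness of \(\mu\) to get \(\mu\)-almost everywhere convergence (of a subsequence), and conclude by Fatou's lemma. The paper compresses your explicit subsequence and Borel--Cantelli step into the statement that convergence in capacity implies convergence in \(\mu\)-measure, citing the absolute continuity of diffuse measures with respect to the \(\cl{p}\) capacity; your unpacking via countable subadditivity of the underlying Sobolev capacity is an equivalent formulation of the same fact.
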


\begin{proof}[Proof of the claim]
The convexity of \(\Phi\) follows from the convexity of the real function \(t \in \R \mapsto t^+\). For the lower semicontinuity, let \((\zeta_n)_{n \in \N}\) be a sequence of functions in \(C_0^\infty(\overline{\Omega})\) converging to \(\zeta \in C_0^\infty(\overline{\Omega})\).{}
Applying the weak capacitary estimate~\eqref{eq:weakCapacitaryInequality} to \((\zeta_{n} - \zeta)/\epsilon\) with \(\epsilon > 0\), we deduce that \((\zeta_{n})_{n \in \N}\) converges to \(\zeta\) in capacity.
By absolute continuity of \(\mu\) with respect to the \(\cl{p}\) capacity, the convergence also holds in measure. By Fatou's lemma, we thus have
\[
\Phi(\zeta) = \int_{\Omega} \zeta^+ \d\mu \le \liminf_{n \rightarrow \infty} \int_{\Omega} \zeta_n^+ \d\mu = \liminf_{n \rightarrow \infty} \Phi(\zeta_n),
\]
and this proves the claim.
\end{proof}

We proceed with the proof of the lemma. Let \(0 < \delta < \varepsilon\) and take a compact set \(K \subset \Omega\) such that
\[
\mu(\Omega \setminus K) \le \varepsilon - \delta.
\]
By the claim and the geometric form of the Hahn--Banach theorem \cite{Brezis:2011}*{Theorem~1.11}, the functional \(\Phi\) is the supremum of a family of continuous linear functionals in \(C_0^\infty(\overline{\Omega})\). Hence, given a nonnegative function \(\psi \in C_0^\infty(\overline{\Omega})\) with \(\psi \ge 1\) in \(K\), there exists a continuous linear functional \(F : C_0^\infty(\overline{\Omega}) \rightarrow \R\) such that, for every \(\zeta \in C_0^\infty(\overline{\Omega})\),
\begin{equation}
\label{eq:functionalComparison}
F(\zeta) \le \Phi(\zeta),
\end{equation}
and
\begin{equation}
\label{eq:psiFunctionalComparison}
\Phi(\psi) \le F(\psi) + \delta.
\end{equation}
In particular,
\[
F(\zeta) \le \mu(\Omega) \norm{\zeta}_{L^{\infty}(\Omega)},
\]
for every \(\zeta \in C_{0}^{\infty}(\overline\Omega)\). Thus, by the Riesz representation theorem, there exists \(\nu \in \cM(\Omega)\) such that, for every \(\zeta \in C_0^\infty(\overline{\Omega})\),
\[
F(\zeta) = \int_{\Omega} \zeta \d\nu.
\]
Given a nonnegative function \(\zeta \in C_0^\infty(\overline{\Omega})\), by \eqref{eq:functionalComparison} we have \(F(- \zeta) \le 0\). Thus,
\[
0 \le \int_{\Omega} \zeta \d\nu \le \int_{\Omega} \zeta \d\mu,
\]
which implies that \(0 \le \nu \le \mu\) in \(\Omega\). Using \eqref{eq:psiFunctionalComparison}, we thus have
\[
\begin{aligned}
\norm{\mu - \nu}_{\cM(\Omega)} &= (\mu - \nu)(K) + (\mu - \nu)(\Omega \setminus K) \\
& \le \int_{\Omega} \psi \d(\mu - \nu) + \mu(\Omega \setminus K)\\
& = \Phi(\psi) - F(\psi) + \mu(\Omega \setminus K)
 \le \delta + (\varepsilon - \delta) = \varepsilon.
\end{aligned}
\]
Since \(F\) is a continuous linear functional in \(C_{0}^{\infty}(\overline\Omega)\), Assertion~{\itshape (i)} is satisfied and the proof of the lemma is complete.
\end{proof}

In the proof of \Cref{prop:strongApproximation}, we construct the sequence \((\mu_n)_{n \in \N}\) inductively based on \Cref{lem:epsilonStrongApproximation}.

\begin{proof}[Proof of \Cref{prop:strongApproximation}]
Let \((\varepsilon_n)_{n \in \N}\) be a non-increasing sequence of positive numbers converging to \(0\) and \(\nu_0 \in \cM(\Omega)\) be a measure satisfying the conclusion of \Cref{lem:epsilonStrongApproximation} with \(\varepsilon = \varepsilon_0\). Given \(n \in \N \setminus \set{0}\), assume that \((\nu_k)_{k \in \set{0,\dots,n-1}}\) is a family of nonnegative measures in \(\cM(\Omega)\) such that
\[
0 \le \sum_{k=0}^{n-1} \nu_k \le \mu
\]
and, for each \(k \in \set{0,\dots,n-1}\), \(\nu_k\) satisfies the functional inequality in Assertion~{\itshape (i)}. Applying \Cref{lem:epsilonStrongApproximation} to the measure \(\mu - \sum\limits_{k=0}^{n-1} \nu_k\) and \(\varepsilon = \varepsilon_n\), there exists a nonnegative measure \(\nu_n \in \cM(\Omega)\) satisfying the functional inequality in Assertion~{\itshape (i)} such that
\[
0 \le \nu_n \le \mu - \sum_{k=0}^{n-1} \nu_k \quad \text{and} \quad \norm[\bigg]{\mu - \sum_{k=0}^{n-1} \nu_k - \nu_n}_{\cM(\Omega)} \le \varepsilon_n.
\]
For each \(n \in \N\), define \(\mu_n = \sum\limits_{k=0}^n \nu_k\). Such a sequence \((\mu_n)_{n \in \N}\) satisfies Assertions~{\itshape (i)}~and~{\itshape (ii)} but the measures need not be compactly supported in \(\Omega\). To this end, for each \(n \in \N\) we define
\[
\Omega_n = \set*{x \in \Omega : \dist(x,\partial\Omega) > \varepsilon_n}.
\]
Observe that \(\Omega_n \Subset \Omega\) and \(\Omega = \bigcup\limits_{n=0}^\infty \Omega_n\). By the monotone set property, for every \(i \in \N\), we have
\[
\lim_{n \rightarrow \infty} \mu_i(\Omega \setminus \Omega_n) = \mu_i(\Omega \setminus \Omega) = 0.
\]
By the triangle inequality, we also have
\[
\norm{\mu - \mu_i \lfloor_{\Omega_n}}_{\cM(\Omega)} 
\le \norm{\mu - \mu_i}_{\cM(\Omega)} + \norm{\mu_i - \mu_i \lfloor_{\Omega_n}}_{\cM(\Omega)}
\le \varepsilon_i + \mu_i(\Omega \setminus \Omega_n).
\]
Take an increasing sequence of indices \((n_i)_{i \in \N}\) such that, for every \(i \in \N\),
\[
\mu_i(\Omega \setminus \Omega_{n_i}) \le 1/(i+1).
\]
Then, the sequence \((\mu_{n_i} \lfloor_{\Omega_{n_i}})_{i \in \N}\) has the required properties.
\end{proof}

\section{Choice of a minimizing sequence for the capacity}

To show that a measure for which the Dirichlet problem~\eqref{eq:dirichletProblem} has a solution for every nonnegative potential in \(L^p(\Omega)\) is diffuse, we rely on a particular choice of a minimizing sequence for the capacity using a cut-off and truncation argument:

\begin{proposition}
\label{prop:minimizingSequence}
Let \(1 \le p < \infty\) and \(K \subset \Omega\) be a compact set such that \(\capt_{\cl{p}}(K;\Omega) = 0\). Then there exists a sequence \((\varphi_n)_{n \in \N}\) of nonnegative functions in \(C_c^\infty(\Omega)\) such that
\begin{enumerate}[(i)]
\item \((\varphi_n)_{n \in \N}\) converges pointwise to the characteristic function \(\chi_K\);
\item \((\varphi_n)_{n \in \N}\) is bounded in \(L^\infty(\Omega)\);
\item \((\Delta \varphi_n)_{n \in \N}\) converges to \(0\) in \(L^p(\Omega)\).
\end{enumerate}
\end{proposition}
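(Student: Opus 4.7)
The strategy is to produce $(\varphi_n)_{n \in \N}$ by three successive refinements: start from a near-minimizer of the capacity, multiply by a fixed cutoff to secure compact support in $\Omega$, and apply a smooth truncation to achieve the $L^\infty$-bound while preserving $\varphi_n \equiv 1$ on $K$.

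By $\capt_{\cl{p}}(K;\Omega) = 0$ and the definition of the capacity, I first pick $\zeta_n \in C_0^\infty(\overline{\Omega})$ with $\zeta_n \ge 0$, $\zeta_n > 1$ on $K$, and $\norm{\Delta \zeta_n}_{L^p(\Omega)} \to 0$. To enforce compact support in $\Omega$, I multiply by a fixed $\eta \in C_c^\infty(\Omega)$ with $\eta \equiv 1$ on an open neighborhood of $K$. Expanding $\Delta(\eta \zeta_n) = \eta\, \Delta \zeta_n + 2\, \nabla \eta \cdot \nabla \zeta_n + \zeta_n\, \Delta \eta$, the leading term is small in $L^p$ by construction, and the other two vanish in $L^p$ by elliptic regularity for the Dirichlet problem: Calder\'on--Zygmund when $p > 1$ gives $\norm{\zeta_n}_{W^{2,p}(\Omega)} \to 0$, while Stampacchia's estimate for measure data when $p = 1$ gives $\norm{\zeta_n}_{W^{1,q}(\Omega)} \to 0$ for every $q < N/(N-1)$, which is enough in $W^{1,1}$ since $\Omega$ is bounded. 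I may thus assume $\zeta_n \in C_c^\infty(\Omega)$.

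Next I choose a smooth, nondecreasing $T : \R \to [0,1]$ with $T(t) = 0$ for $t \le 0$ and $T(t) = 1$ for $t \ge 1$, and set $\varphi_n := T(\zeta_n) \in C_c^\infty(\Omega)$. Assertion~(ii) is immediate from $0 \le \varphi_n \le 1$. On $K$, $\zeta_n > 1$ yields $\varphi_n \equiv 1$; off $K$, extracting a subsequence with $\zeta_n \to 0$ almost everywhere (possible since $\norm{\zeta_n}_{L^p(\Omega)} \to 0$) gives $\varphi_n = T(\zeta_n) \to T(0) = 0$ almost everywhere. Together with $\abs{K} = 0$---a consequence of $\zeta_n > 1$ on $K$ and $\zeta_n \to 0$ in $L^p$---this establishes the pointwise convergence to $\chi_K$ in~(i).

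The principal obstacle is (iii). Writing
\[
\Delta \varphi_n = T'(\zeta_n)\, \Delta \zeta_n + T''(\zeta_n)\, \abs{\nabla \zeta_n}^2,
\]
the first term has $L^p$-norm at most $\norm{T'}_\infty \norm{\Delta \zeta_n}_{L^p(\Omega)} \to 0$. The quadratic term is the hardest. For $p = 1$, letting $G(t) := \int_0^t \abs{T''(s)} \d s$ (bounded, with $G(0) = 0$) and integrating by parts in $\int_\Omega \nabla G(\zeta_n) \cdot \nabla \zeta_n$ on $\zeta_n \in C_c^\infty(\Omega)$ yields
\[
\int_\Omega \abs{T''(\zeta_n)}\, \abs{\nabla \zeta_n}^2 = -\int_\Omega G(\zeta_n)\, \Delta \zeta_n \le \norm{G}_\infty \norm{\Delta \zeta_n}_{L^1(\Omega)} \to 0.
\]
For $p \ge N/2$, the Sobolev embedding applied to $\norm{\zeta_n}_{W^{2,p}(\Omega)} \to 0$ supplies $\nabla \zeta_n \to 0$ in $L^{2p}$, whence $\abs{\nabla \zeta_n}^2 \to 0$ in $L^p$. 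The range $1 < p < N/2$ is the most delicate, since Calder\'on--Zygmund combined with Sobolev only produces $\nabla \zeta_n \to 0$ in $L^{Np/(N-p)}$, which falls short of $L^{2p}$; here one must choose the minimizing sequence more carefully, either by iterating the $p = 1$ integration-by-parts trick on a tower of truncation levels, or by building $\zeta_n$ explicitly from a covering of $K$ by small balls dictated by the vanishing of the capacity.
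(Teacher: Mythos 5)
Your outline follows the same three-step structure as the paper's proof (near-minimizer from the capacity, fixed cutoff to get compact support, smooth truncation to cap the $L^\infty$-norm), and the cutoff step via the elliptic estimate $\norm{\zeta_n}_{W^{1,p}(\Omega)} \le C\norm{\Delta\zeta_n}_{L^p(\Omega)}$ is correct in both the Calder\'on--Zygmund range $p>1$ and the Littman--Stampacchia--Weinberger range $p=1$. Your $p=1$ integration by parts for the quadratic term is clean and essentially the $p=1$ special case of what the paper needs, and the $p\ge N/2$ Sobolev argument is fine. However, you leave a genuine gap in the range $1<p<N/2$, which is precisely the regime that matters in the application to \Cref{thm:characterization}. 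You flag this yourself, but neither of the two fallbacks you sketch is carried through, so the proof is incomplete as written.

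The paper closes exactly this gap with \Cref{lem:compositionMazya} (Maz'ya's composition estimate). Writing $\varphi$ for the cut-off near-minimizer, one uses the single weighted integration by parts
\[
\int_{\R^N}\frac{|\nabla\varphi|^{2p}}{(1+\varphi)^{2p}}
=\frac{1}{2p-1}\int_{\R^N}\frac{\Div\bigl(|\nabla\varphi|^{2p-2}\nabla\varphi\bigr)}{(1+\varphi)^{2p-1}},
\]
then H\"older with exponents $p$ and $p/(p-1)$ on the right, absorbs the resulting factor $\bigl(\int|\nabla\varphi|^{2p}(1+\varphi)^{-2p}\bigr)^{(p-1)/p}$ into the left-hand side, drops the weight using $1+\varphi\ge 1$ on $\{\varphi\le 1\}$ (this is where nonnegativity of $\varphi$ enters), and finishes with the Calder\'on--Zygmund inequality $\norm{D^2\varphi}_{L^p}\lesssim\norm{\Delta\varphi}_{L^p}$. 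The outcome is
\[
\norm{\nabla\varphi}_{L^{2p}(\{\varphi\le1\})}^2\le C\,\norm{\Delta\varphi}_{L^p(\R^N)},
\]
valid for every $1\le p<\infty$, which is exactly the missing estimate for $T''(\zeta_n)|\nabla\zeta_n|^2$. Your ``iterate the $p=1$ trick'' intuition is in the right spirit, but the point is a one-shot weighted identity rather than a tower of truncations.

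There is also a secondary issue: your argument gives convergence to $\chi_K$ only Lebesgue-almost everywhere off $K$ (via a subsequence with $\zeta_n\to 0$ a.e.), whereas the statement requires pointwise convergence and the application needs convergence $\mu$-almost everywhere for a measure $\mu$ that can be singular with respect to Lebesgue measure. The paper secures genuine pointwise convergence by a Cantor diagonal argument: take a shrinking chain of neighborhoods $\omega_k\downarrow K$, functions $\psi_k\in C_c^\infty(\omega_k)$ equal to $1$ on $K$, and replace $T(\zeta_n)$ by $\psi_k\,T(\zeta_{n_k})$, so that each function vanishes identically off $\omega_k$. You would need a similar device (or a varying cutoff $\eta_k$) to meet assertion (i) in the stated form.
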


The proof of \Cref{prop:minimizingSequence} relies on two lemmas. The first one shows that minimizing functions of the capacity in the space \(C_0^\infty(\overline{\Omega})\) can be chosen to be compactly supported in \(\Omega\).

\begin{lemma}
\label{lem:minimizingSequenceCompactSupport}
Let \(1 \le p < \infty\) and \(K \subset \Omega\) be a compact set such that \(\capt_{\cl{p}}(K;\Omega) = 0\). Then there exists a sequence \((\varphi_n)_{n \in \N}\) of nonnegative functions in \(C_c^\infty(\Omega)\) such that
\begin{enumerate}[(i)]
\item for every \(n \in \N\), we have \(\varphi_n > 1\) in \(K\);
\item \((\Delta \varphi_n)_{n \in \N}\) converges to \(0\) in \(L^p(\Omega)\).
\end{enumerate}
\end{lemma}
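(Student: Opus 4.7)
The plan is to take a minimizing sequence for $\capt_{\cl{p}}(K;\Omega)$ in $C_0^{\infty}(\overline\Omega)$ and localize it by multiplying by a smooth cutoff, then control the cross terms produced by the Leibniz rule via elliptic regularity. By definition of the vanishing capacity, there is a sequence $(\zeta_n)_{n\in\N}$ of nonnegative functions in $C_0^\infty(\overline\Omega)$ with $\zeta_n > 1$ on $K$ and $\Delta \zeta_n \to 0$ in $L^p(\Omega)$. Although each $\zeta_n$ vanishes on $\partial\Omega$, its support may touch the boundary, so it need not belong to $C_c^\infty(\Omega)$. To remedy this, I would pick a cutoff $\chi \in C_c^\infty(\Omega)$ with $0 \le \chi \le 1$ and $\chi \equiv 1$ on a neighborhood of $K$, which exists because $K$ is compact and $K \subset \Omega$, and define $\varphi_n = \chi\,\zeta_n \in C_c^\infty(\Omega)$. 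This $\varphi_n$ is nonnegative and coincides with $\zeta_n$ on $K$, so $\varphi_n > 1$ there.

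The crux of the argument is the convergence $\Delta\varphi_n \to 0$ in $L^p(\Omega)$. The Leibniz rule gives
\[
\Delta\varphi_n = \chi\,\Delta\zeta_n + 2\,\nabla\chi\cdot\nabla\zeta_n + \zeta_n\,\Delta\chi,
\]
and the first summand is immediately bounded by $\norm{\chi}_{L^\infty}\,\norm{\Delta\zeta_n}_{L^p(\Omega)} \to 0$. Since $\nabla\chi$ and $\Delta\chi$ are bounded and supported in some compact set $K' \Subset \Omega$, it remains to show that $\nabla\zeta_n \to 0$ and $\zeta_n \to 0$ in $L^p(K')$. For $p > 1$ this is straightforward: the Calderón-Zygmund estimate up to the smooth boundary, applied to the Dirichlet problem satisfied by $\zeta_n$, yields $\norm{\zeta_n}_{W^{2,p}(\Omega)} \le C\,\norm{\Delta\zeta_n}_{L^p(\Omega)} \to 0$, and the two remaining summands go to zero in $L^p(\Omega)$.

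The main obstacle is the endpoint case $p = 1$, where Calderón-Zygmund breaks down. Here I would invoke the classical theory of the Dirichlet problem with $L^1$ data (Stampacchia, Brezis-Strauss~\cite{Brezis_Strauss:1973}), which ensures the continuous linear dependence
\[
\norm{\zeta_n}_{W^{1,q}_0(\Omega)} \le C_q\,\norm{\Delta\zeta_n}_{L^1(\Omega)}
\]
for every $q < N/(N-1)$. Fixing any such $q > 1$ and using Hölder's inequality on the bounded set $K'$ then converts this $W^{1,q}$ convergence into $L^1(K')$ convergence of both $\nabla\zeta_n$ and $\zeta_n$, finishing the argument in both regimes.
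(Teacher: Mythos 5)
Your argument is correct and coincides in substance with the paper's: both multiply a minimizing sequence $(\zeta_n)$ by a fixed cutoff supported in $\Omega$ and control the Leibniz cross terms via elliptic regularity, invoking Calder\'on--Zygmund estimates for $p>1$ and the Littman--Stampacchia--Weinberger $W^{1,q}$ estimate (here reduced to a bound on $\norm{\zeta_n}_{W^{1,p}(\Omega)}$) for the endpoint $p=1$. The paper merely states the needed inequality directly as $\norm{\zeta}_{W^{1,p}(\Omega)} \le C\norm{\Delta\zeta}_{L^p(\Omega)}$ for all $1 \le p < \infty$ rather than expanding the Leibniz rule term by term, but the underlying ideas are identical.
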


\begin{proof}[Proof of \Cref{lem:minimizingSequenceCompactSupport}]
By the definition of the capacity of \(K\), there exists a sequence \((\zeta_n)_{n \in \N}\) of nonnegative functions in \(C_0^\infty(\overline{\Omega})\) such that, for every \(n \in \N\), \(\zeta_n > 1\) in \(K\), and the sequence \((\Delta \zeta_n)_{n \in \N}\) converges to \(0\) in \(L^p(\Omega)\). Take a fixed nonnegative function \(\phi \in C_c^\infty(\Omega)\) such that \(\phi = 1\) in \(K\). For each \(n \in \N\), define \(\varphi_n = \zeta_n \phi\). On the one hand, we have
\[
\norm{\Delta \varphi_n}_{L^p(\Omega)} \le \C \bracks*{\norm{\zeta_n}_{W^{1,p}(\Omega)} + \norm{\Delta \zeta_n}_{L^p(\Omega)}}.
\]
On the other hand, there exists a constant \(C > 0\) such that, for every \(\zeta \in C_0^\infty(\overline{\Omega})\), the following estimate holds:
\begin{equation}
\label{eq:ellipticEstimateStampacchia}
\norm{\zeta}_{W^{1,p}(\Omega)} \le C \norm{\Delta \zeta}_{L^p(\Omega)}.
\end{equation}
In the case \(p = 1\), this is proved by Littman, Stampacchia and Weinberger~\cite{Littman_Stampacchia_Weinberger:1963}*{Theorem~5.1}; see also \cite{Ponce:2016}*{Proposition~5.1}. In the case \(p > 1\), this is a consequence of \(L^p\) estimates due to Cald\'eron and Zygmund~\cite{Gilbarg_Trudinger:1983}*{Theorem~9.15 and Lemma~9.17}. Applying this estimate to the functions \(\zeta_n\), we obtain
\[
\norm{\Delta \varphi_n}_{L^p(\Omega)} \le \C \norm{\Delta \zeta_n}_{L^p(\Omega)}.
\]
The sequence \((\varphi_n)_{n \in \N}\) has the required properties.
\end{proof}

The second lemma involved in the proof of \Cref{prop:minimizingSequence} is a property of composition with nonnegative test functions due to Maz'ya~\cite{Maz'ya:1973}. We present a proof for convenience.

\begin{lemma}
\label{lem:compositionMazya}
Let \(1 \le p < \infty\) and \(H : \left[ 0,\infty \right) \rightarrow \R\) be a smooth function such that \(H(0) = 0\) and \(H(s) = 1\), for every \(s \ge 1\). Then for every nonnegative function \(\varphi \in C_c^\infty(\R^N)\) we have
\[
\norm{\Delta H(\varphi)}_{L^p(\R^N)} \le C \norm{\Delta \varphi}_{L^p(\R^N)},
\]
for some constant \(C > 0\) depending on \(N\), \(p\) and \(H\).
\end{lemma}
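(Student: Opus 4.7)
The starting point is the chain rule
\[
\Delta H(\varphi) = H'(\varphi)\,\Delta\varphi + H''(\varphi)\,\abs{\nabla\varphi}^{2}.
\]
Because $H \in C^\infty([0,\infty))$ with $H \equiv 1$ on $[1,\infty)$, both $H'$ and $H''$ are bounded and supported in $[0,1]$, so the triangle inequality reduces the estimate to bounding each summand in $L^p(\R^N)$. The first summand is immediate:
\[
\norm{H'(\varphi)\,\Delta\varphi}_{L^p(\R^N)} \le \norm{H'}_{L^\infty}\,\norm{\Delta\varphi}_{L^p(\R^N)},
\]
and the main task is to control
\[
\int_{\R^N} \abs{H''(\varphi)}^{p}\,\abs{\nabla\varphi}^{2p},
\]
whose integrand is supported on $\set{\varphi \le 1}$.

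When $p = 1$, I would handle this by a direct integration by parts. Approximating $\abs{H''}$ by a smooth nonnegative $h \in C_c^\infty([0,\infty))$ and setting $F(s) = \int_0^s h(t)\,\d t$, the relation $\nabla F(\varphi) = h(\varphi)\,\nabla\varphi$ together with the compact support of $\varphi$ yields
\[
\int_{\R^N} h(\varphi)\,\abs{\nabla\varphi}^{2} = -\int_{\R^N} F(\varphi)\,\Delta\varphi \le \norm{F}_{L^\infty}\,\norm{\Delta\varphi}_{L^1(\R^N)},
\]
with $\norm{F}_\infty$ depending only on $H$. Passing $h \to \abs{H''}$ delivers the $p = 1$ bound.

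For $p > 1$ the substitute is the Gagliardo--Nirenberg interpolation
\[
\norm{\nabla u}_{L^{2p}(\R^N)}^{2} \le C\,\norm{u}_{L^\infty(\R^N)}\,\norm{\Delta u}_{L^p(\R^N)}, \qquad u \in C_c^\infty(\R^N),
\]
which follows by combining $\norm{\nabla u}_{L^{2p}} \le C\,\norm{u}_{L^\infty}^{1/2}\,\norm{D^2 u}_{L^p}^{1/2}$ with the Calder\'on--Zygmund estimate $\norm{D^2 u}_{L^p} \le C\,\norm{\Delta u}_{L^p}$. I would apply it to $u = T(\varphi)$, where $T \colon [0,\infty) \to [0,\infty)$ is a smooth nonnegative truncation with $T(0)=0$, $T(s) = s$ on $[0,1]$, and $T$ globally bounded, so that $\norm{u}_\infty$ is controlled by $H$ alone. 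Since $H''$ vanishes where $\varphi \ge 1$ and $T$ is the identity on $[0,1]$, the equality $H''(\varphi)\,\abs{\nabla\varphi}^{2} = H''(u)\,\abs{\nabla u}^{2}$ holds almost everywhere, and the estimate reduces to bounding $\norm{\Delta u}_{L^p(\R^N)}$ by $\norm{\Delta\varphi}_{L^p(\R^N)}$.

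The main obstacle lies precisely in this last reduction: the chain rule $\Delta u = T'(\varphi)\,\Delta\varphi + T''(\varphi)\,\abs{\nabla\varphi}^{2}$ regenerates a contribution in $\abs{\nabla\varphi}^{2}$ supported on the transition region where $T''$ is active. To close the argument I would set up a dyadic family of truncations $T_M$ satisfying $\norm{T_M}_\infty \simeq M$ and $\norm{T_M''}_\infty \lesssim 1/M$, passing from shell $\set{\varphi \le M}$ to $\set{\varphi \le 2M}$ through a geometric absorption. Because $\varphi$ is smooth and compactly supported, it is bounded, so the iteration terminates in finitely many steps and yields a constant depending only on $N$, $p$, and $H$, as required.
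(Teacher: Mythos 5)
Your reduction to the single estimate $\norm{\nabla\varphi}_{L^{2p}(\set{\varphi\le 1})}^2 \le C\norm{\Delta\varphi}_{L^p(\R^N)}$ is exactly the right target, and your $p=1$ argument via $\int h(\varphi)\abs{\nabla\varphi}^2 = -\int F(\varphi)\Delta\varphi$ is correct and close in spirit to what the paper does. The gap is in the $p>1$ case, at precisely the point you flag as the "main obstacle." The dyadic scheme does not close: applying the interpolation inequality to $u_j = T_{2^j}(\varphi)$ produces a recursion of the shape
\[
\norm{\nabla\varphi}_{L^{2p}(\set{\varphi\le 2^j})}^2 \;\le\; C\,2^j\,\norm{\Delta\varphi}_{L^p(\R^N)} \;+\; C\,\norm{\nabla\varphi}_{L^{2p}(\set{2^j\le\varphi\le 2^{j+1}})}^2,
\]
because $\norm{T_{2^j}}_{L^\infty}\sim 2^j$ enters linearly while $\norm{T_{2^j}''}_{L^\infty}\sim 2^{-j}$ only cancels that factor in front of the shell term, not in front of $\norm{\Delta\varphi}_{L^p}$. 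Iterating until $2^J \ge \norm{\varphi}_{L^\infty}$, the coefficient of $\norm{\Delta\varphi}_{L^p}$ accumulates to $\sum_{j<J}C^{j+1}2^j$, which is unbounded unless the fixed interpolation constant happens to satisfy $C<1/2$ (and there is no reason it should). Widening the transition zones does not help: making $T_M''$ smaller forces $\norm{T_M}_{L^\infty}$ larger, and the product is bounded below. So the constant ends up depending on $\norm{\varphi}_{L^\infty}$, which is exactly what the lemma must avoid.

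The paper sidesteps the iteration entirely by an integration by parts with the weight $(1+\varphi)^{-(2p-1)}$. From
\[
\Div\!\left[\frac{\abs{\nabla\varphi}^{2p-2}\nabla\varphi}{(1+\varphi)^{2p-1}}\right]
= -(2p-1)\frac{\abs{\nabla\varphi}^{2p}}{(1+\varphi)^{2p}} + \frac{\Div(\abs{\nabla\varphi}^{2p-2}\nabla\varphi)}{(1+\varphi)^{2p-1}},
\]
the Divergence theorem, H\"older with exponents $p$ and $p'$, and the two facts $(1+\varphi)^{-1}\le 1$ on all of $\R^N$ and $(1+\varphi)^{-1}\ge \tfrac{1}{2}$ on $\set{\varphi\le 1}$ (this is where nonnegativity of $\varphi$ is used) give directly
\[
\paren[\Big]{\int_{\set{\varphi\le 1}}\abs{\nabla\varphi}^{2p}}^{1/p} \le C\paren[\Big]{\int_{\R^N}\abs{D^2\varphi}^p}^{1/p},
\]
and then Calder\'on--Zygmund finishes. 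This is a single, scale-free identity rather than an iteration, and the weight makes the absorption automatic. If you want to salvage your approach, the lesson is that you need a weighted version of the interpolation step rather than a stack of unweighted ones; as written, the $p>1$ case is not proved.
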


\resetconstant
\begin{proof}[Proof of \Cref{lem:compositionMazya}]
Since \(H''(\varphi)\) is supported in \(\set{\varphi \le 1}\), we have
\[
\abs{\Delta H(\varphi)} \le \C \bracks*{\abs{\Delta \varphi} + \chi_{\set{\varphi \le 1}} \abs{\nabla \varphi}^2}.
\]
By the triangle inequality, we obtain the estimate
\[
\norm{\Delta H(\varphi)}_{L^p(\R^N)} \le \Cs \bracks*{\norm{\Delta \varphi}_{L^p(\R^N)} + \norm{\nabla \varphi}_{L^{2p}(\set{\varphi \le 1})}^2}.
\]
The remaining of the proof consists in showing that
\begin{equation}
\label{eq:estimateGradientLaplacian}
\norm{\nabla \varphi}_{L^{2p}(\set{\varphi \le 1})}^2 \le {\C} \norm{\Delta \varphi}_{L^p(\R^N)}.
\end{equation}
First of all, observe that
\[
\Div \bracks*{\frac{\abs{\nabla \varphi}^{2p-2} \nabla \varphi}{(1+\varphi)^{2p-1}}} = -(2p-1) \frac{\abs{\nabla \varphi}^{2p}}{(1+\varphi)^{2p}} + \frac{\Div(\abs{\nabla \varphi}^{2p-2} \nabla \varphi)}{(1+\varphi)^{2p-1}}.
\]
By the Divergence theorem, we obtain
\[
\int_{\R^N} \frac{\abs{\nabla \varphi}^{2p}}{(1+\varphi)^{2p}} = \frac{1}{2p-1} \int_{\R^N} \frac{\Div(\abs{\nabla \varphi}^{2p-2} \nabla \varphi)}{(1+\varphi)^{2p-1}}.
\]
In the case \(p = 1\), we immediately deduce \eqref{eq:estimateGradientLaplacian} from the nonnegativity of \(\varphi\):
\[
\frac{1}{4} \int_{\set{\varphi \le 1}} \abs{\nabla \varphi}^2 \le \int_{\R^N} \abs{\Delta \varphi}.
\]
Let us thus assume that \(p > 1\). From the integral identity above, we have
\[
\int_{\R^N} \frac{\abs{\nabla \varphi}^{2p}}{(1+\varphi)^{2p}} \le \C \int_{\R^N} \frac{\abs{D^2 \varphi} \abs{\nabla \varphi}^{2p-2}}{(1+\varphi)^{2p-1}}.
\]
By the H\"older inequality, we then get
\[
\int_{\R^N} \frac{\abs{\nabla \varphi}^{2p}}{(1+\varphi)^{2p}} \le \Cs \paren[\bigg]{\int_{\R^N} \frac{\abs{D^2 \varphi}^p}{(1+\varphi)^p}}^{\frac{1}{p}} \paren[\bigg]{\int_{\R^N} \frac{\abs{\nabla \varphi}^{2p}}{(1+\varphi)^{2p}}}^{\frac{p-1}{p}}.
\]
Since \(\varphi\) is nonnegative, this yields the estimate
\[
\frac{1}{4} \paren[\bigg]{\int_{\set{\varphi \le 1}} \abs{\nabla \varphi}^{2p}}^{\frac{1}{p}} \le \Cs \paren[\bigg]{\int_{\R^N} \abs{D^2 \varphi}^p}^{\frac{1}{p}}.
\]
On the other hand, the Cald\'eron--Zygmund inequality \cite{Gilbarg_Trudinger:1983}*{Corollary~9.10} implies the existence of a constant \(C > 0\) independent of \(\varphi\) such that
\[
\norm{D^2 \varphi}_{L^p(\R^N)} \le C \norm{\Delta \varphi}_{L^p(\R^N)}.
\]
Combining the last two inequalities, we obtain \eqref{eq:estimateGradientLaplacian} and the lemma follows.
\end{proof}


We now turn to the

\resetconstant
\begin{proof}[Proof of \Cref{prop:minimizingSequence}]
Let \((\varphi_n)_{n \in \N}\) be a sequence of nonnegative functions in \(C_c^\infty(\Omega)\) satisfying the conclusion of \Cref{lem:minimizingSequenceCompactSupport}. Take a smooth function \(H : \left[{} 0, \infty \right){} \rightarrow \R\) satisfying the assumptions of \Cref{lem:compositionMazya} and let \(\psi \in C_c^\infty(\Omega)\) be given. For every \(n \in \N\), we have
\[
\norm[\big]{\Delta (\psi H(\varphi_n))}_{L^p(\Omega)} \le \C \bracks*{\norm[\big]{H(\varphi_n)}_{W^{1,p}(\Omega)} + \norm[\big]{\Delta H(\varphi_n)}_{L^p(\Omega)}}.
\]
Applying the estimate \eqref{eq:ellipticEstimateStampacchia} to \(H(\varphi_n)\) and then using \Cref{lem:compositionMazya}, we obtain
\[
\norm[\big]{\Delta (\psi H(\varphi_n))}_{L^p(\Omega)} \le \C \norm{\Delta \varphi_n}_{L^p(\Omega)}.
\]
Letting \(n\) tend to infinity, we get
\begin{equation}
\label{eq:limitLaplacian}
\lim_{n \rightarrow \infty} \norm{\Delta (\psi H(\varphi_n))}_{L^p(\Omega)} = 0.
\end{equation}

To obtain the conclusion, we use Cantor's diagonal argument. For this purpose, take a non-increasing sequence \((\omega_k)_{k \in \N}\) of open subsets of \(\Omega\) containing \(K\) such that
\[\bigcap\limits_{k=0}^\infty \omega_k = K.\]
For every \(k \in \N\), choose a function \(\psi_k \in C_c^\infty(\omega_k)\) such that \(\psi_k = 1\) in \(K\). Given a sequence \((\varepsilon_k)_{k \in \N}\) of positive numbers converging to \(0\), we deduce from \eqref{eq:limitLaplacian} that there exists an increasing sequence \((n_k)_{k \in \N}\) of indices such that, for every \(k \in \N\),
\[
\norm[\big]{\Delta (\psi_k H(\varphi_{n_k}))}_{L^p(\Omega)} \le \varepsilon_k.
\]
The sequence \((\psi_k H(\varphi_{n_k}))_{k \in \N}\) has the required properties.
\end{proof}

\section{Proof of \texorpdfstring{\Cref{thm:characterization}}{Theorem \ref{thm:characterization}}}

Suppose that \(\mu\) is diffuse with respect to \(\capt_{\cl{p}}\). Since the equation is linear and \(\mu\) can be decomposed as a difference of nonnegative diffuse measures, we may assume from the beginning that \(\mu\) is nonnegative. Indeed, by the Jordan decomposition theorem there exist two nonnegative measures \(\mu^+\) and \(\mu^-\) in \(\cM(\Omega)\) which are mutually singular and such that \(\mu = \mu^+ - \mu^-\). Using the inner regularity of the measures, one shows that they are both diffuse with respect to \(\capt_{\cl{p}}\). Thus, assuming \(\mu\) to be nonnegative, let \((\mu_i)_{i \in \N}\) be a sequence of nonnegative measures in \(\cM(\Omega)\) satisfying the conclusion of \Cref{prop:strongApproximation}. For each \(i \in \N\), denote by \(v_i\) the solution of the Dirichlet problem
\[
\left\{
\begin{alignedat}{2}
-\Delta v_i &= \mu_i && \quad \text{in \(\Omega\)}, \\
v_i &= 0 && \quad \text{on \(\partial\Omega\)}.
\end{alignedat}
\right.
\]
It follows from the functional inequality satisfied by \(\mu_i\) that, for every \(f \in C^\infty(\overline{\Omega})\),
\[
\abs*{\int_{\Omega} f v_i} \le C_i \norm{f}_{L^p(\Omega)}.
\]
By the Riesz representation theorem, this implies that \(v_i \in L^{p'}(\Omega)\), whence
\[
V v_i \in L^1(\Omega).
\]
Since \(\mu_i\) is nonnegative, by the weak maximum principle, we have
\[
v_i \ge 0 \quad \text{almost everywhere in \(\Omega\)}.
\]
Then, by the nonnegativity of \(V\), we also have
\[
Vv_i \ge 0 \quad \text{almost everywhere in \(\Omega\)}.
\]
The function \(v_i\) is thus a supersolution of the Dirichlet problem~\eqref{eq:dirichletProblem} with datum \(\mu_i\). Applying the method of sub and supersolutions for the Schr\"odinger operator \cite{Ponce:2016}*{Proposition~22.7} with subsolution \(0\) and supersolution \(v_i\), we deduce that the Dirichlet problem~\eqref{eq:dirichletProblem} with datum \(\mu_i\) has a nonnegative solution \(u_i\). Due to the linearity of the equation, for every \(i,j \in \N\), the function \(u_i - u_j\) is a solution of the Dirichlet problem~\eqref{eq:dirichletProblem} with datum \(\mu_i - \mu_j\). By the absorption estimate \eqref{eq:absorptionEstimate} (cf.~\cite{Ponce:2016}*{Proposition~21.5}), we then obtain
\[
\norm{Vu_i - Vu_j}_{L^1(\Omega)} \le \norm{\mu_i - \mu_j}_{\cM(\Omega)}.
\]
Since the sequence of measures \((\mu_i)_{i \in \N}\)  converges strongly in \(\cM(\Omega)\), the inequality above implies that \((Vu_i)_{i \in \N}\) is a Cauchy sequence in \(L^1(\Omega)\). It then follows from the \(L^1\) elliptic estimate (cf.~\eqref{eq:ellipticEstimateStampacchia}) and the triangle inequality that
\[
\norm{u_i - u_j}_{L^1(\Omega)} \le \norm{\mu_i - \mu_j}_{\cM(\Omega)} + \norm{Vu_i - Vu_j}_{L^1(\Omega)}.
\]
Hence, \((u_i)_{i \in \N}\) is also a Cauchy sequence in \(L^1(\Omega)\) and thus converges in \(L^1(\Omega)\) to some function \(u\). This implies that the sequence \((Vu_i)_{i \in \N}\) converges in \(L^1(\Omega)\) to the function \(Vu\). By the Dominated convergence theorem, we deduce that
\[
\int_{\Omega} u(-\Delta \zeta + V \zeta) = \int_{\Omega} \zeta \d\mu,
\]
for every \(\zeta \in C_0^\infty(\overline{\Omega})\). The function \(u\) is therefore a solution of the Dirichlet problem~\eqref{eq:dirichletProblem} with datum \(\mu\).

For the converse implication, assume that the Dirichlet problem~\eqref{eq:dirichletProblem} with datum \(\mu\) has a solution for every nonnegative function \(V \in L^p(\Omega)\). Let \(K \subset \Omega\) be a compact set such that \(\capt_{\cl{p}}(K;\Omega) = 0\) and let \((\varphi_n)_{n \in \N}\) be a sequence of nonnegative functions in \(C_c^\infty(\Omega)\) satisfying the conclusion of \Cref{prop:minimizingSequence}. Since the sequence \((\Delta \varphi_n)_{n \in \N}\) converges to \(0\) in \(L^p(\Omega)\), we deduce from the partial converse of the Dominated convergence theorem \cite{Willem:2013}*{Proposition 4.2.10} that there exist a subsequence \((\varphi_{n_k})_{k \in \N}\) and a function \(V \in L^p(\Omega)\) such that
\begin{enumerate}[(a)]
\item for every \(k \in \N\), \(\abs{\Delta \varphi_{n_k}} \le V\) almost everywhere in \(\Omega\);
\item \((\Delta \varphi_{n_k})_{k \in \N}\) converges almost everywhere to \(0\) in \(\Omega\).
\end{enumerate}
Let \(u\) be the solution of the Dirichlet problem~\eqref{eq:dirichletProblem} with potential \(V\) and density \(\mu\). For every \(k \in \N\),
\[
\abs{u \Delta \varphi_{n_k}} \le \abs{Vu} \in L^1(\Omega).
\]
Then, by the Dominated convergence theorem,
\[
\lim_{k \rightarrow \infty} \int_{\Omega} u \Delta \varphi_{n_k} = 0.
\]
Since the sequence \((\varphi_{n_k})_{k \in \N}\) is bounded in \(L^\infty(\Omega)\) and converges pointwise to the characteristic function \(\chi_K\), another application of the Dominated convergence theorem yields
\[
\lim_{k \rightarrow \infty} \int_{\Omega} \varphi_{n_k} \d\mu = \mu(K) \quad \text{and} \quad \lim_{k \rightarrow \infty} \int_{\Omega} Vu \varphi_{n_k} = 0.
\]
Combining the above limits, we obtain
\[
\mu(K) = \lim_{k \rightarrow \infty} \int_{\Omega} \varphi_{n_k} \d\mu = \lim_{k \rightarrow \infty} \int_{\Omega} u(-\Delta \varphi_{n_k} + V \varphi_{n_k}) = 0.
\]
Since \(K\) is arbitrary, we conclude that \(\mu\) is diffuse with respect to \(\capt_{\cl{p}}\). The proof of the theorem is complete. \qed

\begin{remark}
\label{rem:remark}
In the paper of V\'eron and Yarur~\cite{Veron_Yarur:2012}, the authors investigate a counterpart of the Dirichlet problem~\eqref{eq:dirichletProblem} for the trace problem. In their case, the measure lies on the boundary instead of in the interior of the domain and they also assume that \(V\) belongs to \(L\loc^\infty(\Omega)\). In our case, the Dirichlet problem~\eqref{eq:dirichletProblem} has a solution for every diffuse measure even if the potential \(V\) merely belongs to \(L\loc^p(\Omega)\), and the same observation applies to \Cref{thm:existence} when \(V \in H\loc^1(\Omega)\).

Rather than deducing this fact from our \Cref{thm:characterization,thm:existence}, it is more convenient to implement directly the tools developed here. We explain the argument for \(V \in L\loc^p(\Omega)\). To this end, we combine the strategies in \cite{Baras_Pierre:1984}*{Lemme~3.2} for \(p > 1\) and \cite{Orsina_Ponce:2017}*{Proposition~3.1} for \(p = 1\). We first assume that \(\mu \in \cM(\Omega)\) has compact support in \(\Omega\) and satisfies
\begin{equation}
\label{eq:measureRemark}
\abs*{\int_{\Omega} \zeta \d\mu} \le C \norm{\Delta \zeta}_{L^p(\Omega)},
\end{equation}
for every \(\zeta \in C_0^\infty(\overline{\Omega})\). For a sequence of mollifiers \((\rho_n)_{n \in \N}\) in \(C_c^\infty(\R^N)\) supported in a small neighborhood of \(0\), one still has
\[
\abs*{\int_{\Omega} \zeta \, \rho_n * \mu} \le C' \norm{\Delta \zeta}_{L^p(\Omega)}.
\]
Hence, the solution \(u_n\) of the Dirichlet problem~\eqref{eq:dirichletProblem} with density \(\rho_n * \mu\) satisfies the uniform bound \(\norm{u_n}_{L^{p'}(\Omega)} \le C'\) and also the absorption estimate
\[
\norm{Vu_n}_{L^1(\Omega)} \le \norm{\rho_n * \mu}_{L^1(\Omega)} \le \norm{\mu}_{\cM(\Omega)}.
\]
By compactness, the sequence \((u_n)_{n \in \N}\) converges strongly in \(L^1(\Omega)\) and weakly in \(L^{p'}(\Omega)\) to some function \(u\), and so both estimates are also satisfied by \(u\). Next, for every \(\varepsilon > 0\), one writes
\[
\int_{\Omega} V u_n \zeta = \int_{\set{\abs{\zeta} \ge \varepsilon}} V u_n \zeta + \int_{\set{\abs{\zeta} < \varepsilon}} V u_n \zeta.
\]
Note that \(V \zeta \chi_{\set{\abs{\zeta} \ge \varepsilon}} \in L^p(\Omega)\), while the last integral is uniformly bounded in absolute value by \(\varepsilon \norm{\mu}_{\cM(\Omega)}\). Thus, as \(n\) tends to infinity and then \(\varepsilon\) tends to zero, one deduces that
\[
\lim_{n \rightarrow \infty} \int_{\Omega} V u_n \zeta = \int_{\Omega} V u \zeta.
\]
Hence, \(u\) satisfies the Dirichlet problem~\eqref{eq:dirichletProblem} under assumption~\eqref{eq:measureRemark}. For an arbitrary measure \(\mu \in \cM(\Omega)\) which is diffuse with respect to the \(\cl{p}\) capacity, one proceeds along the lines of the proof of the direct implication of \Cref{thm:characterization} by strong approximation of \(\mu\) in \(\cM(\Omega)\) in terms of measures with compact support that satisfy \eqref{eq:measureRemark}.
\end{remark}

\section{A geometric interpretation of \texorpdfstring{\Cref{thm:existence}}{Theorem \ref{thm:existence}}}
\label{sec:geometricInterpretation}

In this section, we provide a geometric interpretation of \Cref{thm:existence} which involves the Hausdorff content \(\haus_\infty^{N-2}\), defined for every compact set \(K \subset \R^N\) by
\[
\haus_\infty^{N-2}(K) = \inf \set[\Bigg]{\sum_{i=0}^n \omega_{N-2} r_i^{N-2} : \text{\(K \subset \bigcup_{i=0}^n B(x_i;r_i)\) and \(0 < r_i < \infty\)}},
\]
where \(\omega_{N-2}\) is the volume of the unit ball in \(\R^{N-2}\). The Hausdorff content is always finite and vanishes on the same compact sets as the Hausdorff measure \(\haus^{N-2}\).

To reach our goal, we rely on the following second-order capacity
\[
\capt_{\cd}(K) = \inf \set[\Big]{\norm{D^2 \varphi}_{L^1(\R^N)} : \text{\(\varphi \in C_c^\infty(\R^N)\) is nonnegative and \(\varphi > 1\) in \(K\)}}.
\]
The connection between \(\haus_\infty^{N-2}\) and \(\capt_{\ch}\) through this capacity can be summarized as follows:

\begin{proposition}
\label{prop:equivalence}
Suppose that \(N \ge 3\). Then
\[
\capt_{\cd} \sim \capt_{\ch} \sim \haus_\infty^{N-2}
\]
on every compact subset of\/ \(\R^N\).
\end{proposition}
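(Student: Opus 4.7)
The plan is to close the cycle of inequalities
\[
\capt_{\cd}(K) \le C_1 \capt_{\ch}(K) \le C_2 \haus_\infty^{N-2}(K) \le C_3 \capt_{\cd}(K),
\]
which together yield all three pairwise equivalences in one stroke. For the first inequality, take an admissible function $\varphi \in C_c^\infty(\R^N)$ for $\capt_{\ch}(K)$, that is, nonnegative with $\varphi > 1$ in $K$. The pure second-order derivatives can be recovered from $\Delta\varphi$ through double Riesz transforms, $\partial_i \partial_j \varphi = \RT_i \RT_j \Delta \varphi$. Since the Riesz transforms act boundedly on $H^1(\R^N)$ and $H^1(\R^N) \hookrightarrow L^1(\R^N)$ continuously, we deduce $\|D^2 \varphi\|_{L^1(\R^N)} \le C \|\Delta \varphi\|_{H^1(\R^N)}$. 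Taking the infimum over such $\varphi$ gives $\capt_{\cd}(K) \le C_1 \capt_{\ch}(K)$.

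For the second inequality, fix once and for all a bump $\eta \in C_c^\infty(B(0,2))$ with $0 \le \eta \le 2$ and $\eta \equiv 2$ on $\overline{B(0,1)}$. Given $\varepsilon > 0$, choose a ball cover $K \subset \bigcup_{i=0}^n B(x_i,r_i)$ with $\sum_i \omega_{N-2} r_i^{N-2} \le \haus_\infty^{N-2}(K) + \varepsilon$, and set
\[
\varphi(x) = \sum_{i=0}^n \eta\bigl((x-x_i)/r_i\bigr).
\]
Then $\varphi \in C_c^\infty(\R^N)$ is nonnegative and satisfies $\varphi \ge 2 > 1$ on $K$, so $\varphi$ is admissible for $\capt_{\ch}(K)$. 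Each summand $\Delta\bigl[\eta((\cdot-x_i)/r_i)\bigr] = r_i^{-2}(\Delta\eta)((\cdot-x_i)/r_i)$ is supported in $B(x_i, 2r_i)$, has vanishing integral (being the Laplacian of a compactly supported function), and $L^\infty$-norm of order $r_i^{-2}$; hence it is a constant multiple, with coefficient of order $r_i^{N-2}$, of a standard $H^1$-atom. Summing the atomic decomposition yields $\|\Delta \varphi\|_{H^1(\R^N)} \le C \sum_i r_i^{N-2}$, and letting $\varepsilon \to 0$ gives $\capt_{\ch}(K) \le C_2 \haus_\infty^{N-2}(K)$.

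For the third inequality, let $\varphi$ be admissible for $\capt_{\cd}(K)$. Since $\varphi$ is a nonnegative, compactly supported, smooth function, the Newtonian representation formula gives $\varphi = c_N I_2(-\Delta \varphi)$, where $I_2$ denotes the Riesz potential of order $2$ in $\R^N$. Using $\varphi \ge 0$, we obtain the pointwise bound $\varphi(x) \le c_N I_2(|\Delta \varphi|)(x)$, and therefore $K \subset \{c_N I_2(|\Delta \varphi|) > 1\}$. The crucial input is Adams' Hausdorff-content estimate for Riesz potentials of nonnegative $L^1$ functions, which asserts
\[
\haus_\infty^{N-\alpha}(\{I_\alpha f > t\}) \le \frac{C}{t}\,\|f\|_{L^1(\R^N)}
\]
for every $\alpha \in (0,N)$ and every $f \ge 0$ in $L^1(\R^N)$; see~\cite{Adams:1988}. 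Applying it with $\alpha = 2$, $t = 1/c_N$, $f = |\Delta \varphi|$, together with the pointwise inequality $|\Delta \varphi| \le \sqrt{N}\,|D^2 \varphi|$, we deduce $\haus_\infty^{N-2}(K) \le C_3 \|D^2 \varphi\|_{L^1(\R^N)}$, and an infimum over admissible $\varphi$ completes the cycle.

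The main obstacle is the third step, whose sharp form rests on Adams' level-set inequality for Riesz potentials; the first two steps amount to bookkeeping with Calder\'on--Zygmund theory on $H^1$ and an explicit atomic construction from a near-optimal Hausdorff cover.
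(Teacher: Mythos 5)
Your first two links are sound. The chain $\|D^2\varphi\|_{L^1} \le C\|\Delta\varphi\|_{H^1}$ via iterated Riesz transforms and $H^1\hookrightarrow L^1$ is exactly the paper's step, and your atomic construction of a test function from a near-optimal ball cover is a clean alternative to the paper's route (which instead deduces the scaling identity $\capt_{\ch}(B[x;r]) = r^{N-2}\capt_{\ch}(B[0;1])$ and invokes finite subadditivity). Either way works for $\capt_{\ch}\le C\haus_\infty^{N-2}$.

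The third step, however, rests on a false lemma. The claimed weak-type estimate
\[
\haus_\infty^{N-\alpha}\bigl(\{I_\alpha f > t\}\bigr) \le \frac{C}{t}\,\norm{f}_{L^1(\R^N)}, \qquad f \ge 0,\ f\in L^1(\R^N),
\]
does \emph{not} hold when the Hausdorff-content dimension matches the codimension of the Riesz potential. To see this in the relevant case $\alpha = 2$, take (say, in $\R^4$) the functions $f_\varepsilon = c\,\varepsilon^{-2}\chi_{D\times B_\varepsilon}$, where $D$ is a fixed unit $2$-disk in a coordinate $2$-plane and $B_\varepsilon$ is the $\varepsilon$-ball in the orthogonal $2$-plane, normalized so $\norm{f_\varepsilon}_{L^1}=1$. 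A direct computation gives $I_2 f_\varepsilon \gtrsim \log(1/\varepsilon)$ on a fixed positive fraction of $D$, a set of $\haus_\infty^{2}$-content bounded below. Thus $t\,\haus_\infty^{2}(\{I_2 f_\varepsilon > t\})/\norm{f_\varepsilon}_{L^1}\gtrsim \log(1/\varepsilon)\to\infty$ with $t\sim\log(1/\varepsilon)$, contradicting the claimed bound. Equivalently, the dual statement that $\norm{I_\alpha\mu}_{L^\infty}\le C$ for any Frostman measure with $\mu(B(x,r))\le r^{N-\alpha}$ fails: with $\mu = \haus^{N-2}\lfloor_D$, the potential $I_2\mu$ diverges logarithmically on $D$.

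What rescues the inequality $\haus_\infty^{N-2}\le C\capt_{\cd}$ is precisely the extra structure you discard when you pass from $\varphi$ to the pointwise bound $\varphi\le c_N I_2(|\Delta\varphi|)$: the data $\Delta\varphi$ has mean zero and, more to the point, lies in the Hardy space $H^1(\R^N)$ (not merely $L^1$), with $\norm{\Delta\varphi}_{H^1}\lesssim\norm{D^2\varphi}_{L^1}$. The paper exploits this through Maz'ya's trace inequality $\int|\varphi|\,\d\mu\le C\int|D^2\varphi|$ for Frostman measures $\mu\le\haus_\infty^{N-2}$, combined with Frostman's lemma applied to $\{|\varphi|\ge 1\}$; Adams' original route uses $H^1$--$\BMO$ duality. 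Either replaces your false Riesz-potential level-set bound; without one of them, your cycle does not close.
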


For two capacities \(\capt\) and \(\capt'\), by \(\capt \sim \capt'\) we mean that there exist positive constants \(C_1\) and \(C_2\) such that
\[
C_1 \capt \le \capt' \le C_2 \capt.
\]

As a straightforward consequence of \Cref{prop:equivalence}, we have that measures which are diffuse with respect to the \(\ch\) capacity cannot charge compact sets of zero \(\haus^{N-2}\) measure. \Cref{prop:equivalence} can be deduced from the work of Adams~\cite{Adams:1988}. His proof that yields the equivalence \(\capt_{\ch} \sim \haus_\infty^{N-2}\) is based on the duality between the Hardy space \(H^1\) and the space \(\BMO\) of functions of bounded mean oscillation. We provide a short argument which relies on the boundedness of the Riesz transform in the Hardy space \(H^1\).

We recall that the Riesz transform of a function \(\varphi \in C_c^\infty(\R^N)\) is the vector-valued function \(R \varphi : \R^N \rightarrow \R^N\) defined for \(x \in \R^N\) by
\[
\RT \varphi (x) = - C_N \int_{\R^N} \frac{\nabla \varphi(y)}{\abs{x - y}^{N-1}} \d{y},
\]
where \(C_N\) is a positive constant depending on \(N\). In particular, \(\RT \varphi \in L^\infty(\R^N)\). The real Hardy space is the vector subspace of \(L^1(\R^N)\) given by
\[
H^1(\R^N) = \set*{u \in L^1(\R^N) : \RT u \in L^1(\R^N;\R^N)},
\]
where \(\RT u\) is defined in the sense of distributions.

\resetconstant

\begin{proof}[Proof of \Cref{prop:equivalence}]
By translation and scaling arguments, for every \(x \in \R^N\) and every \(r > 0\), one has
\begin{equation}
\label{eq:secondOrderCapacityClosedBall}
\capt_{\cd}(B[x;r]) = r^{N-2} \capt_{\cd}(B[0;1]).
\end{equation}
The finite subadditivity of the \(\cd\) capacity then implies that
\[
\capt_{\cd} \le \C \haus_\infty^{N-2}.
\]
The same argument yields the estimate
\begin{equation}
\label{eq:hardyHausdorffEstimate}
\capt_{\ch} \le \C \haus_\infty^{N-2}.
\end{equation}
Indeed, given \(a \in \R^N\) and \(r > 0\), for every \(\varphi \in C_c^\infty(\R^N)\), one observes that
\[
\norm[\bigg]{\Delta \varphi \paren[\bigg]{\frac{\cdot - a}{r}}}_{H^1(\R^N)} = r^{N-2} \norm{\Delta \varphi}_{H^1(\R^N)},
\]
and this identity yields the counterpart of \eqref{eq:secondOrderCapacityClosedBall} for \(\capt_{\ch}\). Thus, one also has \eqref{eq:hardyHausdorffEstimate}.

The Riesz transform maps continuously functions in \(H^1(\R^N)\) into \(H^1(\R^N)\) \cite{Stein:1993}*{Chapter~III, Theorem~4}. More precisely,
\[
\norm{\RT u}_{H^1(\R^N)} \le \C \norm{u}_{H^1(\R^N)},
\]
for every \(u \in H^1(\R^N)\). This yields the estimate
\[
\capt_{\cd} \le \C \capt_{\ch}.
\]
Indeed, for every \(\varphi \in C_c^\infty(\R^N)\), one has
\begin{equation}
\label{eq:secondOrderLaplacian}
\norm[\big]{D^2 \varphi}_{L^1(\R^N)} 
\le \C \norm[\big]{\RT (\RT \Delta \varphi)}_{L^1(\R^N)} 
\le \Cs \norm[\big]{\RT \Delta \varphi}_{H^1(\R^N)} \le \C \norm{\Delta \varphi}_{H^1(\R^N)},
\end{equation}
where the first inequality follows from the Fourier characterization of the Riesz transform \cite{Stein:1970}*{p.~59}.

The estimate
\[
\haus_\infty^{N-2} \le \C \capt_{\cd}
\]
is a consequence of the following second-order counterpart of Gustin's boxing inequality:
\begin{equation}
\label{eq:boxingInequality}
\haus_\infty^{N-2} \paren*{\set*{\abs{\varphi} \ge 1}} \le \C \norm{D^2 \varphi}_{L^1(\R^N)},
\end{equation}
for every \(\varphi \in C_c^\infty(\R^N)\). This inequality is based on two ingredients. The first one is a trace inequality due to Maz'ya~\citelist{\cite{Maz'ya:1979} \cite{Maz'ya:2011}*{Section~1.4.3}}: if \(\mu\) is a nonnegative finite Borel measure such that \(\mu \le \haus_\infty^{N-2}\), then
\[
\int_{\R^N} \abs{\varphi} \d\mu \le \C \int_{\R^N} \abs{D^2 \varphi}.
\]
The second one is Frostman's lemma, which provides one with a nonnegative finite Borel measure \(\mu\) supported by \(\set{\abs{\varphi} \ge 1}\) such that \(\mu \le \haus_\infty^{N-2}\) and
\[
\haus_\infty^{N-2}(\set{\abs{\varphi} \ge 1}) \le \C \, \mu(\set{\abs{\varphi} \ge 1}).
\]
Combining the last two inequalities, we obtain \eqref{eq:boxingInequality}. The proof of the proposition is complete.
\end{proof}

Before concluding this section, we observe that
\Cref{prop:equivalence} also holds for \(N = 2\). 
For example, the inequality \(\haus_{\infty}^0 \le \capt_{\cd}\) has a straightforward proof.
Indeed, on the one hand we have \(\haus_{\infty}^0 = 1\) on nonempty compact sets, while on the other hand \(\norm{\varphi}_{L^\infty(\R^2)} \le \norm{D^2 \varphi}_{L^1(\R^2)}\), and thus \(\capt_{\cd} \ge 1\). 
This yields the desired inequality; the rest of the argument is unchanged.

\section{Proof of \texorpdfstring{\Cref{thm:existence}}{Theorem \ref{thm:existence}}}

We first prove \Cref{thm:existence} for nonnegative, compactly supported measures which are explicitly controlled by the \(\ch\) capacity. The proof of \Cref{thm:existence} will then be carried out by strong approximation of \(\mu\) by measures of this type.

\begin{proposition}
\label{prop:existenceCompactSupport}
Let \(V \in H^1(\Omega)\) be a nonnegative function. If \(\mu \in \cM(\Omega)\) is a nonnegative measure with compact support such that \(\mu \le C \capt_{\ch}\), then the Dirichlet problem~\eqref{eq:dirichletProblem} has a nonnegative solution.
\end{proposition}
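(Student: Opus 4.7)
The strategy parallels the existence part of \Cref{thm:characterization} (Section~4), with the $L^p$--$L^{p'}$ duality replaced by the $H^1$--$\BMO$ duality. The first step is to extract from the capacitary hypothesis a trace inequality. Combining $\mu \le C \capt_{\ch}$ with \Cref{prop:equivalence} gives $\mu \le C' \haus_{\infty}^{N-2}$, and then Maz'ya's trace inequality (as invoked in the proof of \Cref{prop:equivalence}) together with \eqref{eq:secondOrderLaplacian} yields
\[
\int_{\R^N} \abs{\varphi} \d\mu \le \tilde{C} \norm{\Delta \varphi}_{H^1(\R^N)}
\]
for every $\varphi \in C_c^\infty(\R^N)$; this is the $H^1$-counterpart of the functional estimate appearing in \Cref{prop:strongApproximation}.

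Next, let $(\rho_n)_{n \in \N}$ be standard mollifiers and set $\mu_n = \rho_n * \mu$; for $n$ large, $\mu_n \in C_c^\infty(\Omega)$, and since mollification is a contraction on $H^1(\R^N)$, the trace inequality transfers to $\mu_n$ with the same constant $\tilde{C}$. I would then solve the approximate problem $-\Delta u_n + V u_n = \mu_n$ by direct variational methods to obtain $u_n \ge 0$ in $W_0^{1,2}(\Omega)$, and denote by $v_n \in W_0^{1,2}(\Omega)$ the Laplace solution of $-\Delta v_n = \mu_n$, so that $0 \le u_n \le v_n$ by the weak maximum principle. The absorption estimate \eqref{eq:absorptionEstimate} gives $\norm{V u_n}_{L^1(\Omega)} \le \norm{\mu}_{\cM(\Omega)}$ uniformly, and an $L^1$ elliptic estimate applied to $-\Delta u_n = \mu_n - V u_n$ provides a uniform bound in $W_0^{1,q}(\Omega)$ for some $q < N/(N-1)$; by Rellich--Kondrachov, a subsequence converges $u_n \to u$ strongly in $L^1(\Omega)$ and almost everywhere.

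The key uniform estimate is that $(v_n)$ is bounded in $\BMO(\Omega)$. Writing $v_n = \Phi * \mu_n - h_n$, with $\Phi$ the Newtonian kernel on $\R^N$ and $h_n$ the harmonic extension of the boundary values of $\Phi * \mu_n$ on $\partial\Omega$, the scaling identity from \Cref{prop:equivalence} combined with the hypothesis and a direct bound on $\norm{\mu_n}_{L^{\infty}(\R^N)}$ yields the uniform Frostman condition $\mu_n(B(x,r)) \le C'' r^{N-2}$, which classically implies that $\Phi * \mu_n$ lies in $\BMO(\R^N)$ uniformly in $n$; meanwhile $h_n$ remains uniformly bounded since $\supp \mu_n$ stays in a fixed compact subset of $\Omega$. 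The John--Nirenberg inequality then provides $\lambda > 0$ such that $e^{\lambda v_n}$, and hence $e^{\lambda u_n}$, is uniformly bounded in $L^1(\Omega)$.

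Passing to the limit in the weak formulation $\int_\Omega u_n (-\Delta \zeta + V \zeta) = \int_\Omega \zeta \d\mu_n$ requires the convergence $V u_n \to V u$ in $L^1(\Omega)$, and this is the main obstacle. I would establish it through equi-integrability: nonnegative functions in $H^1(\Omega)$ belong to $L \log L(\Omega)$ on the bounded set $\Omega$ (as recalled after \Cref{thm:existenceNondiffuse}), while $(u_n)$ sits uniformly in the exponential Orlicz class dual to $L \log L$ by the preceding step; the Young--Orlicz inequality for the conjugate pair $(L\log L,\exp L)$ then gives a uniform $L^1$ bound on $V u_n \log(1 + V u_n)$, whence equi-integrability of $(Vu_n)$, and Vitali's theorem closes the argument. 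The delicate point throughout is securing $\BMO$ control of $v_n$ on the bounded domain $\Omega$ rather than only on $\R^N$, for which the compact support of $\mu$ inside $\Omega$ is used in an essential way.
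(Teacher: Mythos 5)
Your proposal takes a genuinely different route from the paper. The paper's proof produces a supersolution in one shot: the Newtonian potential $\NP\mu$ satisfies $-\Delta \NP\mu \ge \mu$ in $(C_0^\infty(\overline\Omega))'$, the hypothesis $\mu \le C\haus_\infty^{N-2}$ is fed into the exponential-integrability theorem of Bartolucci, Leoni, Orsina and Ponce to get $\Exp^{\NP\mu/C} \in L^1(\Omega)$, Young's inequality combined with Stein's $L\log L$ result for nonnegative $H^1$ functions gives $V\NP\mu \in L^1(\Omega)$ (treating the collar near $\partial\Omega$, where $\NP\mu$ is bounded, separately), and then the method of sub and supersolutions closes the argument. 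You instead run an approximation scheme: mollify $\mu$, control the linear solutions $v_n$ in $\BMO$ uniformly via the Frostman condition and Adams' result, invoke John--Nirenberg to get uniform exponential integrability of $u_n$, and close with equi-integrability and Vitali. The two approaches trade the black box of sub/supersolutions for the black box of John--Nirenberg; the paper in fact acknowledges your $\BMO$/duality route in the discussion following \Cref{lem:strongApproximation} but prefers the more direct exponential-integrability theorem because it avoids passing to the limit.

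Two technical points in your sketch need repair. First, the sentence claiming that ``a direct bound on $\norm{\mu_n}_{L^\infty(\R^N)}$ yields the uniform Frostman condition'' is wrong: $\norm{\mu_n}_{L^\infty}$ blows up as $n\to\infty$ when $\mu$ is singular. What is true, and suffices, is that the Frostman condition $\mu(B(x,r)) \le C'\omega_{N-2} r^{N-2}$ transfers directly to $\mu_n = \rho_n * \mu$ with the same constant, because $(\rho_n * \mu)(B(x,r)) = \int \rho_n(z)\,\mu(B(x-z,r))\,\mathrm{d}z$ and $\rho_n$ is a probability density. Second, the equi-integrability step is not as immediate as stated: Young's inequality in the form $Vu_n \le \tfrac{1}{\lambda}\bigl[V\log^+ V + \Exp^{\lambda u_n}\bigr]$ does \emph{not} give a uniform bound on $\int_\Omega Vu_n\log(1+Vu_n)$, and the bound $\int_E \Exp^{\lambda u_n} \le C$ is uniform but not small as $\abs{E}\to 0$. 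The fix is a two-parameter split: for $M>0$ write $\int_E Vu_n = \int_{E\cap\set{V\le M}} Vu_n + \int_{E\cap\set{V>M}} Vu_n$; the first piece is $\le M\norm{u_n}_{L^2(\Omega)}\abs{E}^{1/2}$ (using that exponential integrability bounds $u_n$ in every $L^q$, $q<\infty$), while the second is controlled by $\tfrac{1}{\lambda}\int_{\set{V>M}} V\log^+V + \tfrac{1}{\lambda}\abs{\set{V>M}}^{1-\lambda/\lambda_0}\bigl(\int_\Omega \Exp^{\lambda_0 u_n}\bigr)^{\lambda/\lambda_0}$ for $\lambda<\lambda_0$, both small for $M$ large since $V\log^+V \in L^1$ and $\abs{\set{V>M}}\to 0$. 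Finally, note that $V\log^+V$ is only \emph{locally} integrable; as in the paper you must exploit that $\supp\mu \Subset \Omega$ so that the potential is bounded in a collar of $\partial\Omega$, where the estimate reduces to $\norm{V}_{L^1}$.
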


In dimension \(N \ge 3\), the proof of \Cref{prop:existenceCompactSupport} relies on the exponential integrability of the Newtonian potential generated by the measure \(\mu\), i.e. the function \(\NP\mu : \R^N \rightarrow \bracks{0,\infty}\) defined for \(x \in \R^N\) by
\[
\NP\mu(x) = \frac{1}{(N - 2) \sigma_N} \int_{\Omega} \frac{\d\mu(y)}{\abs{x - y}^{N- 2}},
\]
where \(\sigma_N\) denotes the surface measure of the unit sphere in \(\R^N\). Indeed, since \(\mu\) is nonnegative and satisfies (\Cref{prop:equivalence})
\begin{equation}
\label{eq:densityEstimate}
\mu \le C \haus_{\infty}^{N-2},
\end{equation}
one has \(\Exp^{\NP\mu / C} \in L^1(\Omega)\). This result is proved by Bartolucci, Leoni, Orsina and Ponce~\cite{Bartolucci_Leoni_Orsina_Ponce:2005}*{Theorem~2} and is the counterpart in higher dimensions of the Brezis--Merle inequality for \(N = 2\); see also \cite{Ponce:2016}*{Proposition~17.8}.

To make the connection with the Dirichlet problem we want to solve, observe that the Newtonian potential belongs to \(L^1(\Omega)\) and satisfies the Poisson equation \cite{Ponce:2016}*{Example~2.12}
\[
- \Delta \NP\mu = \mu \quad \text{in the sense of distributions in \(\Omega\)}.
\]
Besides, every nonnegative function \(V \in H^1(\Omega)\) is locally \(L \log L\) integrable, i.e. for every open set \(\omega \Subset \Omega\), one has
\[
\int_{\omega} V \log^+ V < \infty,
\]
where \(\log^+\) is the positive part of the logarithm function; see \citelist{\cite{Stein:1969}*{Theorem~3}}. In view of Young's inequality, the Newtonian potential is thus a supersolution of the equation
\[
- \Delta u + Vu = \mu \quad \text{in the sense of distributions in \(\Omega\)}.
\]

Since the datum \(\mu\) is nonnegative, to prove the existence of a solution of the Dirichlet problem~\eqref{eq:dirichletProblem} one may construct a supersolution --- not only to the equation, but also taking into account the boundary data --- and then apply the method of sub and supersolutions for the Schr\"odinger operator \cite{Ponce:2016}*{Proposition~22.7}. We observe that the Newtonian potential is indeed a supersolution of the Dirichlet problem~\eqref{eq:dirichletProblem} with datum \(\mu\). The formulation in this case involves test functions in \(C_0^\infty(\overline{\Omega})\) rather than in \(C_c^\infty(\Omega)\). More precisely, given \(V \in L^1(\Omega)\) and \(\nu \in \cM(\Omega)\), we say that \(u \in L^1(\Omega)\) is a supersolution of the Dirichlet problem~\eqref{eq:dirichletProblem} with datum \(\nu\) provided \(Vu \in L^1(\Omega)\) and
\[
- \Delta u + Vu \ge \nu \quad \text{in the sense of \((C_0^\infty(\overline{\Omega}))'\)},
\]
meaning that, for every nonnegative function \(\zeta \in C_0^\infty(\overline{\Omega})\),
\[
\int_{\Omega} u (- \Delta \zeta + V \zeta) \ge \int_{\Omega} \zeta \d\nu.
\]
This formulation encodes the boundary condition \(u \ge 0\) on \(\partial \Omega\). For example, if \(u \in L^1(\Omega)\), \(V \in L^1(\Omega)\) and \(\nu \in \cM(\Omega)\) are such that \(Vu \in L^1(\Omega)\) and
\[
- \Delta u + Vu \ge \nu \quad \text{in the sense of distributions in \(\Omega\)},
\]
and if \(u\) is nonnegative in \(\Omega\), then one has \cite{Ponce:2016}*{Lemma~17.6}
\[
- \Delta u + Vu \ge \nu \quad \text{in the sense of \((C_0^\infty(\overline{\Omega}))'\)}.
\]

The above discussion can be implemented as follows:

\resetconstant

\begin{proof}[Proof of \Cref{prop:existenceCompactSupport}]
Assume that \(N \ge 3\); the case \(N = 2\) will be explained afterwards. Since the Newtonian potential \(\NP\mu\) is nonnegative, it satisfies
\[
- \Delta \NP\mu \ge \mu \quad \text{in the sense of \((C_0^\infty(\overline{\Omega}))'\)}.
\]
It thus remains to show that \(V \NP\mu \in L^1(\Omega)\). We first observe that, by Young's inequality,
\begin{equation}
\label{eq:pointwiseInequality}
V \NP\mu \le \kappa \bracks[\Big]{\Exp^{\NP\mu / \kappa} + V \log^+ V} \quad \text{almost everywhere in \(\Omega\)},
\end{equation}
for every \(\kappa > 0\). On the other hand, by \Cref{prop:equivalence} there exists \(\Cl{cst:estimate} > 0\) such that
\[
\mu \le \Cs \haus_{\infty}^{N-2}.
\]
Thus, \(\Exp^{\NP\mu / \Cs} \in L^1(\Omega)\). For every \(\varepsilon > 0\), define the open set
\[
\Omega_{\varepsilon} = \set*{x \in \Omega : \dist(x, \partial \Omega) > \varepsilon}.
\]
Let \(\delta = \dist(\supp \mu, \partial \Omega)\). Since \(\NP\mu\) is harmonic in \(\R^N \setminus \overline{\Omega_{\delta}}\), we have \(\NP\mu \in L^{\infty}(\Omega \setminus \overline{\Omega_{\delta / 2}})\). Taking \(\kappa = \Cr{cst:estimate}\) in the pointwise inequality \eqref{eq:pointwiseInequality} above and integrating it over \(\Omega\), we obtain
\[
\int_{\Omega} V \NP\mu \le \Cr{cst:estimate} \int_{\Omega_{\delta / 2}} \bracks[\Big]{\Exp^{\NP\mu / \Cr{cst:estimate}} + V \log^+ V} + \C \norm{V}_{L^1(\Omega \setminus \overline{\Omega_{\delta / 2}})} < \infty,
\]
where the last inequality follows from the exponential integrability of \(\NP\mu\) and the local \(L \log L\) integrability of \(V\); indeed, we have \(V \in H^1(\Omega)\) and \(V \ge 0\) almost everywhere in \(\Omega\). Applying the method of sub and supersolutions with subsolution \(0\) and supersolution \(\NP\mu\), we conclude that the Dirichlet problem~\eqref{eq:dirichletProblem} with datum \(\mu\) has a nonnegative solution.

In dimension \(N = 2\), every measure is diffuse with respect to \(\capt_{\ch}\) by the counterpart of \Cref{prop:equivalence} in this dimension. We then rely on the Brezis--Merle inequality \cite{Brezis_Merle:1991}*{Theorem~1} to deduce that \(\Exp^{\cN\mu / \norm{\mu}_{\cM(\Omega)}} \in L^1(\Omega)\), where
\[
\NP\mu(x) = \frac{1}{2\pi} \int_{\R^2} \log \paren*{\frac{\diam \Omega}{\abs{x-y}}} \d\mu(y).
\]
The rest of the proof is unchanged.
\end{proof}

We may summarize the counterpart of \Cref{prop:strongApproximation} for the \(\ch\) capacity as follows:

\begin{lemma}
\label{lem:strongApproximation}
Let \(\mu \in \cM(\Omega)\) be a nonnegative measure. If \(\mu\) is diffuse with respect to the \(\ch\) capacity, then there exists a nondecreasing sequence \((\mu_n)_{n \in \N}\) of nonnegative measures in \(\cM(\Omega)\) with compact support in \(\Omega\) which satisfies
\begin{enumerate}[(i)]
\item for every \(n \in \N\), there exists \(C_n > 0\) such that, for every \(\varphi \in C_c^\infty(\R^N)\),
\[
\abs*{\int_{\Omega} \varphi \d\mu_n} \le C_n \norm{\Delta \varphi}_{H^1(\R^N)};
\]
\item \((\mu_n)_{n \in \N}\) converges strongly to \(\mu\) in \(\cM(\Omega)\).
\end{enumerate}
\end{lemma}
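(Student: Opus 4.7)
The plan is to transplant the proof of \Cref{prop:strongApproximation} from the \(L^p\) setting to the Hardy-space setting, replacing the test function space \(C_0^\infty(\overline\Omega)\) with \(C_c^\infty(\R^N)\) and the norm \(\norm{\Delta\cdot}_{L^p(\Omega)}\) with \(\norm{\Delta\cdot}_{H^1(\R^N)}\). First I would establish the \(H^1\) counterpart of \Cref{lem:epsilonStrongApproximation}: for every \(\varepsilon > 0\) there is \(\nu \in \cM(\Omega)\) with \(0 \le \nu \le \mu\), \(\norm{\mu-\nu}_{\cM(\Omega)} \le \varepsilon\), and the continuity estimate \(\abs{\int_\Omega \varphi \d\nu} \le C \norm{\Delta\varphi}_{H^1(\R^N)}\) for every \(\varphi \in C_c^\infty(\R^N)\). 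Once this is available, the nondecreasing sequence \((\mu_n)_{n \in \N}\) is produced by the same induction and Cantor diagonal truncation onto \(\Omega_n = \set*{x \in \Omega : \dist(x,\partial\Omega) > \varepsilon_n}\) used at the end of the proof of \Cref{prop:strongApproximation}.

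For the \(\varepsilon\)-version, consider the functional \(\Phi : C_c^\infty(\R^N) \to \R\) defined by \(\Phi(\varphi) = \int_\Omega \varphi^+ \d\mu\), where \(C_c^\infty(\R^N)\) is equipped with the norm \(\varphi \mapsto \norm{\Delta\varphi}_{H^1(\R^N)}\) (this is a norm and not merely a seminorm because a harmonic function with compact support is identically zero). Convexity of \(\Phi\) is immediate. Lower semicontinuity hinges on a weak capacitary inequality
\[
\capt_{\ch}(\set*{\abs{\varphi} \ge 1}) \le C \norm{\Delta\varphi}_{H^1(\R^N)}, \qquad \varphi \in C_c^\infty(\R^N),
\]
which I would obtain by chaining three facts established in \Cref{sec:geometricInterpretation}: the bound \(\capt_{\ch} \le C \haus_\infty^{N-2}\) from \Cref{prop:equivalence}, the Maz'ya boxing inequality~\eqref{eq:boxingInequality}, and the Riesz transform estimate~\eqref{eq:secondOrderLaplacian}. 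Applied to \((\varphi_n - \varphi)/\epsilon\), this inequality shows that norm convergence implies convergence in \(\ch\) capacity, hence in \(\mu\)-measure by diffuseness, and Fatou's lemma then gives lower semicontinuity of \(\Phi\) exactly as in the proof of \Cref{lem:epsilonStrongApproximation}.

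With \(\Phi\) convex and lower semicontinuous, the geometric Hahn--Banach theorem, applied with \(0 < \delta < \varepsilon\), a compact set \(K \subset \Omega\) satisfying \(\mu(\Omega \setminus K) \le \varepsilon - \delta\), and a nonnegative \(\psi \in C_c^\infty(\R^N)\) with \(\psi \ge 1\) on \(K\), produces a continuous linear \(F \le \Phi\) with \(F(\psi) \ge \Phi(\psi) - \delta\). The continuity of \(F\) with respect to \(\norm{\Delta\cdot}_{H^1(\R^N)}\) yields the target functional inequality, while the trivial control \(\abs{F(\varphi)} \le \mu(\Omega) \norm{\varphi}_{L^\infty(\R^N)}\) supplies a representing Radon measure \(\nu\) via the Riesz representation theorem. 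Testing \(F\) against nonnegative \(\varphi\) shows \(0 \le \nu \le \mu\) as Borel measures; since \(\mu\) is supported in \(\Omega\), so is \(\nu\), hence \(\nu \in \cM(\Omega)\). The estimate \(\norm{\mu-\nu}_{\cM(\Omega)} \le \varepsilon\) is obtained verbatim from the original argument by splitting into \(K\) and \(\Omega \setminus K\) and using \(\Phi(\psi) - F(\psi) \le \delta\).

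The main obstacle is the weak capacitary inequality. In the \(L^p\) setting it admitted a short proof via the weak maximum principle applied to a regularization of \(\abs{\Delta\varphi}\); in the Hardy-space setting that route is not available, since the \(H^1(\R^N)\) norm interacts poorly with truncation and pointwise comparisons and no analogue of \Cref{lem:compositionMazya} is immediately at hand. Routing through the Hausdorff-content description of \(\capt_{\ch}\) developed in \Cref{sec:geometricInterpretation} is the key device that bypasses this difficulty and makes the rest of the argument run in parallel with the \(L^p\) case.
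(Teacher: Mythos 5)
Your proposal is correct and follows essentially the same route as the paper: the paper establishes the weak capacitary inequality \(\capt_{\ch}(\{\abs{\varphi}\ge 1\})\le C\norm{\Delta\varphi}_{H^1(\R^N)}\) by combining the Riesz transform estimate~\eqref{eq:secondOrderLaplacian}, the boxing inequality~\eqref{eq:boxingInequality}, and the equivalence of \Cref{prop:equivalence}, and then declares that the rest follows along the lines of the proof of \Cref{prop:strongApproximation}. You have identified precisely this substitution as the crux — including why the weak maximum principle shortcut from the \(L^p\) case is unavailable — and correctly filled in the transplanted Hahn--Banach and diagonal-truncation steps.
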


\resetconstant

\begin{proof}[Proof of \Cref{lem:strongApproximation}]
For every \(\varphi \in C_c^\infty(\R^N)\), we have the weak capacitary inequality
\begin{equation}
\label{eq:hardyWeakCapacitaryEstimate}
\capt_{\ch}(\set*{\abs{\varphi} \ge 1}) \le C \norm{\Delta \varphi}_{H^1(\R^N)}.
\end{equation}
Indeed, combining \eqref{eq:secondOrderLaplacian} and \eqref{eq:boxingInequality}, we obtain
\[
\haus_{\infty}^{N-2}(\set*{\abs{\varphi} \ge 1}) \le \C \norm{\Delta \varphi}_{H^1(\R^N)},
\]
and then \eqref{eq:hardyWeakCapacitaryEstimate} follows from the equivalence between \(\haus_{\infty}^{N-2}\) and \(\capt_{\ch}\) (\Cref{prop:equivalence}). 
The rest of the proof follows along the lines of the proof of \Cref{prop:strongApproximation}.
\end{proof}

Every nonnegative measure \(\mu \in \cM(\Omega)\) satisfying the functional inequality in Assertion~{\itshape (i)} also satisfies the estimate
\begin{equation}
\label{eq:strongEstimate}
\mu \le C \capt_{\ch}.
\end{equation}
Indeed, given a compact set \(K \subset \R^N\), let \((\varphi_n)_{n \in \N}\) be a sequence of functions admissible in the definition of the \(\ch\) capacity of \(K\) such that
\[
\lim_{n \rightarrow \infty} \norm{\Delta \varphi_n}_{H^1(\R^N)} = \capt_{\ch}(K).
\]
For each \(n \in \N\), we have
\[
\mu(K) \le \int_{\Omega} \varphi_n \d\mu \le C \norm{\Delta \varphi_n}_{H^1(\R^N)}.
\]
Letting \(n\) tend to infinity, we obtain \eqref{eq:strongEstimate}.

Assuming that the measure \(\mu\) satisfies the functional estimate
\[
\abs*{\int_{\Omega} \psi \d\mu} \le C \norm{\Delta \psi}_{H^1(\R^N)},
\]
for every smooth function \(\psi : \R^{N} \to \R\) that converges sufficiently fast to zero at infinity, in the spirit of the conclusion of \Cref{lem:strongApproximation}, one can prove that \(V \NP\mu \in L\loc^1(\Omega)\) based on the duality between the Hardy space \(H^1\) and the space \(\BMO\) of functions of bounded mean oscillation. 
Indeed, denoting by \(F\) the fundamental solution of the Laplacian in dimension \(N \ge 3\), we first write the Newtonian potential generated by \(\mu\) as \(\NP \mu = F * \mu\). For every \(\varphi \in C_c^\infty(\R^N)\) such that \(\displaystyle \int_{\R^N} \varphi = 0\), we then have the estimate
\[
\abs*{\int_{\R^N} \varphi \NP \mu} = \abs*{\int_{\Omega} F * \varphi \d\mu} \le C \norm{\Delta (F * \varphi)}_{H^1(\R^N)} = C \norm{\varphi}_{H^1(\R^N)}.
\]
By the density of such functions \(\varphi\) in the Hardy space \(H^1\), we deduce from the estimate above that the Newtonian potential \(\NP\mu\) has a unique extension as a continuous linear functional in the dual space of \(H^1\), which is precisely the space \(\BMO\) by Fefferman's characterization \citelist{\cite{Fefferman:1971} \cite{Stein:1993}}. Since \(V \NP\mu \ge 0\) almost everywhere in \(\Omega\), one has \(V \NP\mu \in L\loc^1(\Omega)\); see e.g.~\cite{Bonami_Iwaniec_Jones_Zinsmeister:2007}.

For a nonnegative measure \(\mu \in \cM(\Omega)\) satisfying the weaker assumption \(\mu \le C \capt_{\ch}\), or equivalently \(\mu \le C' \haus_{\infty}^{N-2}\), the fact that the Newtonian potential generated by \(\mu\) has bounded mean oscillation in dimension \(N \ge 3\) can be deduced from \cite{Adams:1975}*{Proposition~3.3}; see also \cite{Ponce:2016}*{Proposition~17.3}.

We finally turn to the

\begin{proof}[Proof of \Cref{thm:existence}]
Since the equation is linear, we may assume that \(\mu\) is nonnegative; see the proof of the converse of \Cref{thm:characterization} above. 
Let \((\mu_i)_{i \in \N}\) be a sequence of nonnegative measures in \(\cM(\Omega)\) with compact support in \(\Omega\) satisfying the conclusion of \Cref{lem:strongApproximation}. 
By \Cref{prop:existenceCompactSupport}, for each \(i \in \N\) the Dirichlet problem~\eqref{eq:dirichletProblem} with datum \(\mu_i\) has a nonnegative solution \(u_i\). It then suffices to proceed as in the proof of the converse of \Cref{thm:characterization} and conclude that the Dirichlet problem~\eqref{eq:dirichletProblem} with datum \(\mu\) has a solution \(u\), obtained as the limit in \(L^1(\Omega)\) of the sequence \((u_i)_{i \in \N}\).
\end{proof}

\section{A strong maximum principle in terms of the \(\ch\) capacity}

\Cref{thm:existence} can be applied to deduce a strong maximum principle for nonnegative potentials in \(H^1(\Omega)\):

\begin{proposition}
\label{prop:strongMaximumPrinciple}
Suppose that \(\Omega\) is connected and \(V \in H^1(\Omega)\) is a nonnegative function. If \(u \in L^1(\Omega)\) is a nonnegative function such that \(Vu \in L^1(\Omega)\) and
\[
-\Delta u + Vu \ge 0 \quad \text{in the sense of distributions in \(\Omega\)},
\]
and if the average integral of \(u\) satisfies
\[
\lim_{r \rightarrow 0} \fint_{B(x;r)} u = 0,
\]
for every point \(x\) in a compact subset of positive \(\ch\) capacity, then \(u = 0\) almost everywhere in \(\Omega\).
\end{proposition}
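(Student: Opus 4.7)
The plan is to argue by contradiction, using \Cref{prop:existenceCompactSupport} to produce a dual test solution whose pairing with $u$ forces the conclusion.

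Suppose for contradiction that $u \not\equiv 0$. Since $u, Vu \in L^1(\Omega)$ and $-\Delta u + Vu \ge 0$ in $\mathcal{D}'(\Omega)$, the distribution $\nu := -\Delta u + Vu$ is a nonnegative Radon measure on $\Omega$. Let $K \subset \Omega$ be the compact set with $\capt_{\ch}(K) > 0$ furnished by the hypothesis. Using the equivalence $\capt_{\ch} \sim \haus_\infty^{N-2}$ from \Cref{prop:equivalence} together with Frostman's lemma, I would select a nonzero nonnegative measure $\mu \in \cM(\Omega)$ with $\supp \mu \subset K$ and $\mu \le C\,\capt_{\ch}$; in particular, $\mu$ is diffuse. \Cref{prop:existenceCompactSupport} then furnishes a nonnegative solution $w \in L^1(\Omega)$ with $Vw \in L^1(\Omega)$ of
\[
-\Delta w + Vw = \mu \quad \text{in } \Omega, \qquad w = 0 \quad \text{on } \partial\Omega.
\]

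The heart of the argument is a duality inequality obtained by testing the two equations against each other. Approximating $w$ by $w_{\varepsilon} = \eta_{\varepsilon}(\rho_{\varepsilon} * w) \in C_{c}^{\infty}(\Omega)$, where $\eta_\varepsilon$ is an interior cutoff and $\rho_\varepsilon$ a standard mollifier, and passing to the limit $\varepsilon \to 0$ by exploiting the absorption estimate and the integrability of $Vu$ and $Vw$, one would obtain
\[
\int_{\Omega} w \, d\nu \le \int_{\Omega} u \, d\mu,
\]
the residual boundary contribution being nonpositive because $w = 0$ on $\partial \Omega$ forces $\partial_n w \le 0$, while $u \ge 0$ in $\Omega$. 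Here the right-hand side is interpreted through the quasi-continuous representative $u^*$ of $u$ with respect to $\capt_{\ch}$. The averaged-vanishing hypothesis, combined with a capacitary Lebesgue differentiation statement (valid for $\mu \le C\,\capt_{\ch}$, in the spirit of \cite{Adams:1988}), then shows that $u^* = 0$ pointwise on $K$ and hence $\mu$-almost everywhere; therefore $\int_{\Omega} u \, d\mu = 0$, and by nonnegativity of $w$ and $\nu$ we conclude $\int_{\Omega} w \, d\nu = 0$.

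The last step is to exploit the strict positivity of $w$ in $\Omega$, which follows from the sub-/supersolution construction already used in the proof of \Cref{prop:existenceCompactSupport} since the Newtonian potential $\NP \mu$ is strictly positive in $\Omega$ whenever $\mu \not\equiv 0$. Consequently $\int_{\Omega} w \, d\nu = 0$ forces $\nu = 0$, and $u$ satisfies the homogeneous equation $-\Delta u + Vu = 0$ in $\mathcal{D}'(\Omega)$. A classical strong maximum principle for nonnegative solutions of this equation with potential $V \in L^1$, in the spirit of \cite{Orsina_Ponce:2016}, then asserts that the vanishing of $u$ in the averaged sense at any single point of $\Omega$ forces $u \equiv 0$ on the connected set $\Omega$, contradicting the standing assumption and completing the proof. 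The main technical obstacle is the rigorous limiting procedure yielding the duality inequality: the integrals $\int w \, d\nu$ and $\int u \, d\mu$ must be given precise meaning despite $u$ and $w$ being merely $L^1$, which demands well-behaved quasi-continuous representatives of the supersolution $u$ and of the potential $w$ compatible with $\capt_{\ch}$, together with the capacitary Lebesgue differentiation that turns ``averages tend to zero on $K$'' into ``$u^*$ vanishes $\mu$-almost everywhere''.
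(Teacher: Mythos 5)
Your approach follows the paper in its opening moves --- selecting a nonzero nonnegative measure $\mu$ supported on $K$ with $\mu \le C\,\capt_{\ch}$, and producing via \Cref{prop:existenceCompactSupport} a nonnegative solution of the Schr\"odinger problem with datum $\mu$ --- but the closing stages contain two genuine gaps, both circular in nature.

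First, you assert that $w > 0$ in $\Omega$ because $\NP\mu > 0$ in $\Omega$ whenever $\mu \not\equiv 0$. The sub-/supersolution construction in \Cref{prop:existenceCompactSupport} only yields $0 \le w \le \NP\mu$; it does not transfer strict positivity from the supersolution down to the solution. Establishing $w > 0$ almost everywhere for a nonnegative solution of $-\Delta w + Vw = \mu \ge 0$ with $V \in H^1(\Omega)$ is itself a strong maximum principle for the Schr\"odinger operator with $H^1$ potential --- precisely the class of result the proposition is trying to prove.

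Second, the final appeal to a ``classical strong maximum principle'' forcing $u \equiv 0$ from averaged vanishing ``at any single point'' is not available for singular potentials. In dimension $N \ge 3$, take $V(x) = c/\abs{x}^2 \in L^1(B_1)$ with $c > 0$: the function $u(x) = \abs{x}^\alpha$, where $\alpha = \bigl(-(N-2) + \sqrt{(N-2)^2 + 4c}\,\bigr)/2 > 0$, is a nonnegative, non-trivial solution of $-\Delta u + Vu = 0$ with $Vu \in L^1$, whose averages $\fint_{B(0;r)} u \sim r^\alpha$ tend to $0$. The strong maximum principles of Ancona ($L^1$) and Orsina--Ponce ($L^p$, $p > 1$) require the vanishing set to have positive capacity, and a single point has zero $W^{1,2}$ and $W^{2,p}$ capacity. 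Invoking a single-point statement at this stage would assume something at least as strong as the conclusion.

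The paper sidesteps both difficulties with an auxiliary perturbation, and this is the decisive ingredient your proposal is missing. Rather than pairing $u$ with the solution of $-\Delta w + Vw = \mu$, one invokes \cite{Ponce:2016}*{Lemma~22.12} to construct $f \in L^\infty(\Omega)$ with $f > 0$ almost everywhere, explicitly defined in terms of the nonnegative solution $v$ with datum $\mu$, such that the solution $w$ of the Dirichlet problem with datum $\mu - f$ is \emph{still} nonnegative. Testing $u$ against this $w$ and using that $u$ vanishes on $K \supset \supp\mu$ (so $\int_\Omega u \d\mu = 0$), the duality inequality becomes
\[
-\int_\Omega uf \;=\; \int_\Omega u\,(\mu - f) \;\ge\; \int_\Omega w\,(-\Delta u + Vu) \;\ge\; 0,
\]
whence $uf = 0$ almost everywhere; since $f > 0$ almost everywhere, $u = 0$ almost everywhere with no further maximum principle required. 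The quasi-continuity and capacitary-differentiation machinery you sketch is still needed to make sense of $\int_\Omega u\d\mu$ for merely $L^1$ functions $u$, but without the strictly positive bounded perturbation $f$ the argument cannot close.
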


This is a counterpart in the Hardy space \(H^1\) setting of the strong maximum principle proved by Ancona~\cite{Ancona:1979} in \(L^1\) and by Orsina and Ponce~\cite{Orsina_Ponce:2016} in \(L^p\) for \(p > 1\). Compared to their results, we require the potential \(V\) to be nonnegative, since \(V^+\) need not belong to \(H^1(\Omega)\) if \(V \in H^1(\Omega)\).

We sketch the proof of \Cref{prop:strongMaximumPrinciple} in the case where \(u\) is a smooth function up to the boundary. The proof in full generality can be implemented along the lines of the proof of Theorem~1 in \cite{Orsina_Ponce:2016}.

\begin{proof}[Sketch of the proof of \Cref{prop:strongMaximumPrinciple} when \(\boldsymbol{u \in C^\infty(\overline{\Omega})}\)]
Let \(K \subset \Omega\) be a compact set such that \(K \subset \set{u = 0}\) and \(\capt_{\ch}(K) > 0\). 
Using the Riesz representation theorem and the Hahn--Banach theorem, one deduces along the lines of the proof of Proposition~A.17 in \cite{Ponce:2016} that there exists a positive measure \(\mu \in \cM(\Omega)\) supported in \(K\) such that
\[
0 \le \int_{K} \varphi \d\mu \le C \norm{\Delta \varphi}_{H^1(\R^N)},
\]
for every nonnegative function \(\varphi \in C_c^\infty(\R^N)\). In particular, \(\mu\) is diffuse with respect to \(\capt_{\ch}\). By \Cref{thm:existence}, the Dirichlet problem~\eqref{eq:dirichletProblem} with datum \(\mu\) has a nonnegative solution \(v\). One can then find a function \(f \in L^\infty(\Omega)\) explicitly defined in terms of \(v\) such that \(f > 0\) almost everywhere in \(\Omega\), and the solution of the Dirichlet problem
\[
\left\{
\begin{alignedat}{2}
-\Delta w + Vw &= \mu - f && \quad \text{in \(\Omega\)}, \\
w &= 0 && \quad \text{on \(\partial\Omega\)},
\end{alignedat}
\right.
\]
is nonnegative \cite{Ponce:2016}*{Lemma~22.12}. 
Since \(u \ge 0\) on \(\partial \Omega\), one proves that
\[
- \int_{\Omega} uf = \int_{\Omega} u(\mu - f) \ge \int_{\Omega} w (-\Delta u + Vu).
\]
Observe that the integral in the right-hand side is nonnegative.
By the nonnegativity of \(uf\), we deduce that \(uf = 0\) almost everywhere in \(\Omega\). Thus, \(u = 0\) in \(\Omega\) and this concludes the proof of the proposition.
\end{proof}

\section{Proof of \texorpdfstring{\Cref{thm:existenceNondiffuse}}{Theorem \ref{thm:existenceNondiffuse}} and generalization to Orlicz spaces}

In this section, we prove a stronger statement which implies \Cref{thm:existenceNondiffuse}. For this purpose, we recall that the Orlicz space \(L\loc^{\Phi}(\Omega)\) is the vector space spanned by the set
\[
\set[\bigg]{u : \Omega \rightarrow \R : \text{\(u\) is measurable and \(\int_{\omega} \Phi(\abs{u}) < \infty\), for every \(\omega \Subset \Omega\)}},
\]
where \(\Phi : \left[ 0,\infty \right) \rightarrow \R\) is a continuous convex function such that
\[
\lim_{s \rightarrow 0} \frac{\Phi(s)}{s} = 0 \quad \text{and} \quad \lim_{s \rightarrow \infty} \frac{\Phi(s)}{s} = \infty.
\]
In particular, such a function \(\Phi\) is nondecreasing and satisfies \(\Phi(0) = 0\).

\begin{proposition}
\label{prop:existenceNondiffuseOrlicz}
Suppose that \(N \ge 3\). 
For every \(\Phi : \left[ 0,\infty \right) \rightarrow \R\) as above, there exists a positive measure \(\mu \in \cM(\Omega)\) with \(\haus^{N-2}(\supp \mu) = 0\) such that the Dirichlet problem~\eqref{eq:dirichletProblem} with datum \(\mu\) has a solution for every nonnegative function \(V \in L\loc^{\Phi}(\Omega) \cap L^1(\Omega)\).
\end{proposition}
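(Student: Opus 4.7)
The plan is to emulate the sub- and supersolution argument of \Cref{prop:existenceCompactSupport}, adapting the choice of measure to the given Orlicz function \(\Phi\). Denote by \(\Phi^{*}\) the complementary Young function of \(\Phi\). My goal will be to construct a positive finite Borel measure \(\mu\), compactly supported in \(\Omega\), such that \(\haus^{N-2}(\supp \mu) = 0\) and \(\Phi^{*}(\NP \mu) \in L\loc^{1}(\Omega)\). Once this is achieved, the Orlicz Young inequality
\[
V \NP \mu \le \Phi(V) + \Phi^{*}(\NP \mu),
\]
combined with the global \(L^{1}\) assumption on \(V\) and the boundedness of \(\NP \mu\) away from \(\supp \mu\) (where \(\NP \mu\) is harmonic), gives \(V \NP \mu \in L^{1}(\Omega)\) for every nonnegative \(V \in L\loc^{\Phi}(\Omega) \cap L^{1}(\Omega)\). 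Hence \(\NP \mu\) is a nonnegative supersolution of the Dirichlet problem~\eqref{eq:dirichletProblem} in the sense of \((C_{0}^{\infty}(\overline{\Omega}))'\), and the method of sub- and supersolutions with subsolution \(0\) produces a nonnegative solution, following the blueprint of \Cref{prop:existenceCompactSupport}.

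For the construction of \(\mu\), my first candidate is a Frostman-type probability measure on a self-similar Cantor set \(K \Subset \Omega\) of Hausdorff dimension \(\alpha < N-2\), where \(\alpha\) is chosen depending on the growth of \(\Phi^{*}\). Then \(\haus^{N-2}(K) = 0\) automatically. The Frostman bound \(\mu(B(x, r)) \le c r^{\alpha}\) combined with the layer-cake representation of \(\NP \mu\) yields the pointwise estimate
\[
\NP \mu(x) \le C \dist(x, K)^{-\beta} + C', \qquad \beta = N - 2 - \alpha,
\]
for \(x \in \Omega \setminus K\). Using the upper Minkowski content bound \(\abs{\set{\dist(\cdot, K) < r}} \le c r^{N-\alpha}\) and a further layer-cake applied to \(\Phi^{*}(\NP \mu)\), the required integrability reduces to the convergence of
\[
\int^{\infty} \Phi^{*}(u) \, u^{-\gamma - 1} \d u, \qquad \gamma = \frac{N - \alpha}{N - 2 - \alpha},
\]
which, by choosing \(\alpha\) sufficiently close to \(N - 2\) (so that \(\gamma\) is as large as needed), can be arranged for any \(\Phi^{*}\) of polynomial-type growth.

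The main obstacle will be to treat Orlicz functions \(\Phi\) close to linear, for which \(\Phi^{*}\) grows faster than any polynomial and the single-Cantor-set construction above is insufficient. I would address this by assembling \(\mu\) as a countable superposition \(\mu = \sum_{n} \mu_{n}\) of Frostman measures on mutually disjoint Cantor sets \(K_{n}\), placed at well-separated points of \(\Omega\), with Hausdorff dimensions \(\alpha_{n} \uparrow N - 2\), diameters \(\varepsilon_{n} \downarrow 0\), and masses \(m_{n} \downarrow 0\) tuned so that the local contributions of each \(\mu_{n}\) to \(\int \Phi^{*}(\NP \mu)\) are summable. The support \(\bigcup_{n} K_{n} \cup \set{x_{\infty}}\), where \(x_{\infty}\) is the accumulation point of the \(K_{n}\), is then a countable union of \(\haus^{N-2}\)-null sets and remains \(\haus^{N-2}\)-null itself. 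This superposition strategy is in the spirit of the construction in \Cref{prop:strongApproximation}, with the Cantor sets replacing individual measure pieces and the Orlicz integrability of the Newtonian potential playing the role previously played by the capacitary estimate.
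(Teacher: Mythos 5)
Your overall skeleton --- construct a compactly supported positive measure \(\mu\) with \(\haus^{N-2}(\supp\mu)=0\) and \(\Phi^{*}(\NP\mu)\in L^1\), then apply Young's inequality \(V\NP\mu\le\Phi(V)+\Phi^{*}(\NP\mu)\) and the method of sub and supersolutions with subsolution \(0\) --- is exactly the strategy of the paper. The paper, however, does not build the measure in-line: it invokes a construction from \cite{Ponce:2005} which, for an \emph{arbitrary} continuous nondecreasing \(g\) with \(g(0)=0\), produces a compactly supported positive \(\mu\) with \(\haus^{N-2}(\supp\mu)=0\) and \(g(\NP\mu)\in L^1(\Omega)\), and then simply takes \(g=\Phi^{*}\).

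There is a genuine gap in your proposed construction. A Frostman measure \(\mu_n\) carried by a self-similar Cantor set \(K_n\) of Hausdorff dimension \(\alpha_n<N-2\) satisfies \(\mu_n(B(x_0,r))\gtrsim r^{\alpha_n}\) for \(x_0\in K_n\), and hence its Newtonian potential has a \emph{two-sided} polynomial singularity, \(\NP\mu_n(x)\asymp c_n\,\dist(x,K_n)^{-(N-2-\alpha_n)}\) with \(c_n>0\). If \(\Phi^{*}\) grows super-polynomially (which happens precisely when \(\Phi\) is close to linear, the case you flag), the integral \(\int_{U_n}\Phi^{*}(\NP\mu_n)\) already diverges on any neighborhood \(U_n\) of \(K_n\), for every \(n\): this is exactly what your own criterion \(\int^{\infty}\Phi^{*}(u)\,u^{-\gamma_n-1}\d u<\infty\) says, and no finite \(\gamma_n\) (i.e.\ no \(\alpha_n<N-2\)) makes it hold. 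Shrinking \(m_n\) or \(\varepsilon_n\) only rescales the prefactor \(c_n\), not the polynomial exponent of the blow-up. Since \(\NP\mu\ge\NP\mu_n\) pointwise and \(\Phi^{*}\) is nondecreasing, the countable superposition inherits the same divergence near each \(K_n\); it cannot repair a failure that already occurs at every individual piece. The construction that works (and which \cite{Ponce:2005} carries out) is of a different nature: one builds a \emph{single} compact set of Hausdorff dimension \emph{exactly} \(N-2\) carrying a measure with a sub-power gauge, \(\mu(B(x,r))\le r^{N-2}h(r)\) for a slowly vanishing \(h\) with \(\int_{0}h(r)\,\frac{\d r}{r}<\infty\), chosen in terms of \(\Phi^{*}\). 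Such a gauge forces \(\haus^{N-2}(\supp\mu)=0\) while keeping \(\NP\mu(x)\lesssim\int_{\dist(x,\supp\mu)}^{1}h(r)\,\frac{\d r}{r}\), a quantity that grows as slowly as desired as \(x\to\supp\mu\), and this --- not a dimension \(\alpha<N-2\) --- is what allows \(\Phi^{*}(\NP\mu)\in L^1(\Omega)\) for an arbitrary Young conjugate \(\Phi^{*}\).
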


The main ingredient in the proof of \Cref{prop:existenceNondiffuseOrlicz} is a construction from \cite{Ponce:2005}. We summarize the main facts that are used hereafter. In dimension \(N \ge 3\), one shows that for every continuous nondecreasing function \(g : \left[ 0, \infty \right) \rightarrow \R\) with \(g(0) = 0\) there exists a positive measure \(\mu \in \cM(\Omega)\) with compact support in \(\Omega\) such that \(\haus^{N-2}(\supp \mu) = 0\) and 
\begin{equation}
\label{eq:gNewtonianPotential}
g(\NP\mu) \in L^1(\Omega).
\end{equation}
Indeed, the choice of the measure \(\mu\) and its support is made in the proof of Theorem~3 in \cite{Ponce:2005}; the proofs of Propositions~1~and~4 in that paper imply that \(g(\NP\mu) \in L^1(\Omega)\).

\begin{proof}[Proof of \Cref{prop:existenceNondiffuseOrlicz}]
Take \(g = \Phi^*\) to be the Legendre transform of \(\Phi\), defined for \(t \in \left[ 0,\infty \right)\) by
\[
\Phi^*(t) = \sup_{s \ge 0} \set*{st - \Phi(s)}.
\]
Let \(\mu \in \cM(\Omega)\) be the positive measure that satisfies \eqref{eq:gNewtonianPotential} above, and let \(V \in L\loc^{\Phi}(\Omega) \cap L^1(\Omega)\) be a nonnegative function. Since the Newtonian potential \(\NP\mu\) is nonnegative, it follows from Young's inequality that
\[
V \NP\mu \le \Phi(V) + \Phi^*(\NP\mu) \quad \text{almost everywhere in \(\Omega\)}.
\]
Since \(\Phi(V) \in L\loc^1(\Omega)\) and \(\NP\mu\) is harmonic in a neighborhood of \(\partial \Omega\), we have \(V \NP\mu \in L^1(\Omega)\).  
Hence, \(\NP\mu\) is a supersolution of the Dirichlet problem~\eqref{eq:dirichletProblem} with datum \(\mu\).{}
We then conclude as in the proof of \Cref{prop:existenceCompactSupport} using the method of sub and supersolutions.
\end{proof}

We now explain how one can deduce \Cref{thm:existenceNondiffuse} from \Cref{prop:existenceNondiffuseOrlicz}. For this end, let \(\Phi : \left[ 0,\infty \right) \rightarrow \R\) be the function defined for \(s \in \left[ 0,\infty \right)\) by
\[
\Phi(s) = s \log s - s + 1.
\]
This function defines an Orlicz space, and if \(V \in H^1(\Omega)\) is a nonnegative function, we have
\[
V \in L\loc^{\Phi}(\Omega).
\]
Observe that the Legendre transform of \(\Phi\) is given by
\[
\Phi^*(t) = \Exp^t - 1,
\]
for every \(t \ge 0\). In this case, we have an example of a measure \(\mu \in \cM(\Omega)\) such that
\[
\Exp^{\NP\mu} \in L^1(\Omega),
\]
but which cannot be approximated strongly by measures such that \(\nu \le C \capt_{\ch}\).{}
Indeed, any such a measure \(\nu\) satisfies \(\nu(\supp{\mu}) = 0\), hence is singular with respect to \(\mu\).

\section{Nonexistence of solutions}
\label{sec:nonexistence}

This last section is dedicated to nonnexistence results for some suitably chosen measure data depending on the potential \(V\). 
We begin with the example given in the introduction, namely

\begin{proposition}
\label{prop:nonexistenceExample}
Suppose that \(N \ge 3\). For every \(\alpha \ge 2\), the equation
\[
-\Delta u + \frac{u}{\abs{x}^{\alpha}} = \delta_0 \quad \text{in \(B_1\)}
\]
has no solution in the sense of distributions.
\end{proposition}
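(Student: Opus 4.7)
Argue by contradiction: suppose there exists $u \in L\loc^1(B_1)$ with $u/\abs{x}^\alpha \in L\loc^1(B_1)$ satisfying the equation in \(\mathcal{D}'(B_1)\); these integrability requirements are the natural ones for both sides of the equation to define distributions on \(B_1\). The strategy is to test the equation against a family of cutoffs concentrated at the origin. This will force a lower bound on the mass \(\int_{A_R}\abs{u}\) over each dyadic annulus \(A_R = \set{R/2 \le \abs{x} \le R}\), and combined with the \(R^{-\alpha}\) size of the potential on \(A_R\), a dyadic summation will yield \(\int \abs{u}/\abs{x}^\alpha = +\infty\) precisely at the threshold \(\alpha = 2\), contradicting the assumed local integrability.

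\textbf{Main computation.} Fix \(\eta \in C_c^\infty(B_1)\) with \(0 \le \eta \le 1\) and \(\eta \equiv 1\) on \(B_{1/2}\), and set \(\eta_R(x) = \eta(x/R)\) for \(R \in (0,1)\). Then \(\eta_R(0) = 1\), \(\Delta \eta_R\) is supported in \(A_R\), and \(\norm{\Delta \eta_R}_{L^\infty(\R^N)} \le C/R^2\) with \(C\) independent of \(R\). Testing the equation against \(\eta_R\) yields
\[
1 = \int_{B_1} u\,(-\Delta \eta_R) \d x + \int_{B_1} \frac{u\,\eta_R}{\abs{x}^\alpha} \d x.
\]
Since \(\eta_R \to 0\) pointwise on \(B_1 \setminus \set{0}\) as \(R \to 0\) and \(\abs{u}\eta_R/\abs{x}^\alpha \le \abs{u}/\abs{x}^\alpha\) is integrable on any compact subset of \(B_1\), the dominated convergence theorem gives \(\int u \eta_R /\abs{x}^\alpha \d x \to 0\). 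Hence there exists \(R_0 \in (0,1)\) such that, for every \(R \in (0, R_0]\),
\[
\frac{1}{2} \le \abs*{\int_{A_R} u\,(-\Delta \eta_R) \d x} \le \frac{C}{R^2} \int_{A_R} \abs{u} \d x.
\]

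\textbf{Dyadic contradiction.} The previous inequality gives \(\int_{A_R} \abs{u}\d x \ge R^2/(2C)\) for every \(R \in (0, R_0]\). Since \(\abs{x}^{-\alpha} \ge R^{-\alpha}\) on \(A_R\), we deduce \(\int_{A_R} \abs{u}/\abs{x}^\alpha \d x \ge R^{2-\alpha}/(2C)\). Summing over the disjoint dyadic annuli \(A_{2^{-k}}\) with \(2^{-k} \le R_0\),
\[
\int_{B_{R_0}} \frac{\abs{u}}{\abs{x}^\alpha} \d x \ge \frac{1}{2C} \sum_{2^{-k} \le R_0} 2^{k(\alpha - 2)} = +\infty
\]
whenever \(\alpha \ge 2\), contradicting \(u/\abs{x}^\alpha \in L\loc^1(B_1)\). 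No serious obstacle is anticipated: the critical exponent \(\alpha = 2\) emerges from matching the \(R^{-2}\) scaling of \(\Delta \eta_R\) with the \(R^{-\alpha}\) size of the potential \(V\) on \(A_R\), and this matching is tight precisely at \(\alpha = 2\), where the dyadic sum already diverges logarithmically.
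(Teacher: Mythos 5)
Your proof is correct, and it takes a genuinely different route from the paper's. The paper invokes a structural lemma (Lemma~21.4 in Ponce's book) asserting that, because \(-\Delta u = \delta_0 - Vu\) with \(Vu \in L^1\), the spherical averages satisfy \(\lim_{r \to 0} r^{N-2}\fint_{\partial B_r} u \, d\sigma = \tfrac{1}{(N-2)\sigma_N}\); it then integrates \(u/\abs{x}^\alpha\) in polar coordinates and uses this pointwise-in-\(r\) lower bound to show \(\int_{B_\varepsilon} u/\abs{x}^\alpha = +\infty\). You instead pair the equation directly against the scaled cutoffs \(\eta_R\), exploit that the Dirac mass contributes exactly \(1\) while the potential term vanishes as \(R \to 0\) by dominated convergence, and extract the lower bound \(\int_{A_R}\abs{u} \gtrsim R^2\) purely from \(\norm{\Delta\eta_R}_\infty \lesssim R^{-2}\); summing over dyadic annuli then forces \(\int \abs{u}/\abs{x}^\alpha = +\infty\). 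The two lower bounds are morally equivalent (\(\fint_{\partial B_r} u \gtrsim r^{-(N-2)}\) integrates to \(\int_{A_R} u \gtrsim R^2\)), but yours is self-contained: it avoids any appeal to the asymptotics of Newtonian potentials and works directly with \(\abs{u}\) rather than with signed spherical means, which makes the argument slightly more elementary and robust. The paper's version, on the other hand, gives sharper pointwise information on the profile of \(u\) near the singularity, which is reused in the same spirit later (\Cref{prop:nonexistence}).
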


\begin{proof}
Assume by contradiction that \(u\) is a solution of the equation above in the sense of distributions. In particular, \(\frac{u}{\abs{x}^{\alpha}} \in L^1(B_1)\). On the one hand, since \(-\Delta u\) has a Dirac mass at the origin, for every \(0 < r < 1\) the average integral of \(u\) over the sphere \(\partial B_r\) satisfies \cite{Ponce:2016}*{Lemma~21.4}
\[
\lim_{r \rightarrow 0} r^{N-2} \fint_{\partial B_r} u \d\sigma = \frac{1}{(N-2)\sigma_N}.
\]
On the other hand, by the integration formula in polar coordinates, we have
\[
\int_{B_1} \frac{u}{\abs{x}^{\alpha}} = \int_0^1 \paren[\bigg]{\int_{\partial B_r} \frac{u}{r^{\alpha}} \d\sigma} \d{r} = \sigma_N \int_0^1 \frac{1}{r^{\alpha - N + 1}} \paren[\bigg]{\fint_{\partial B_r} u \d\sigma} \d{r}.
\]
Take \(0 < \varepsilon < 1\) such that, for every \(0 < r < \varepsilon\),
\[
\fint_{\partial B_r} u \d\sigma \ge \frac{1}{2(N-2)\sigma_N} \frac{1}{r^{N-2}}.
\]
Then,
\[
\int_{B_1} \frac{u}{\abs{x}^{\alpha}} \ge C \int_0^{\varepsilon} \frac{1}{r^{\alpha-1}} \d{r} = \infty,
\]
which is a contradiction.
\end{proof}

The previous result can be pursued to measures supported on larger sets, for example, on manifolds of dimension \(N-2\). 
Those sets have zero \(\cl{1}\) capacity, but positive \(\ch\) capacity by \Cref{prop:equivalence}.

For a manifold \(M \subset \Omega\) and \(r > 0\), we denote its tubular neighborhood of radius \(r\) by
\[
\Nu_r = \set*{x \in \R^N : \dist(x,M) < r}.
\]
We also denote its annular tubular neighborhood of inner radius \(\theta r\) and outer radius \(r\) by
\[
\Lambda_{r,\theta} = \Nu_r \setminus \overline{\Nu_{\theta r}},
\]
for some fixed \(0 < \theta < 1\). We prove

\begin{proposition}
\label{prop:nonexistence}
Suppose that \(N \ge 3\) and \(M \subset \Omega\) is a compact and smooth manifold without boundary of dimension \(N - 2\). Let \(0 < \delta < \dist(M,\partial \Omega)\) and \(V \in L^1(\Omega)\) be a nonnegative function such that, for every \(0 < r < \delta\), 
\[
V \le C \fint_{\Lambda_{r,\theta}} V \quad \text{almost everywhere in \(\Lambda_{r,\theta}\)},
\]
where \(\displaystyle\fint_{\Lambda_{r,\theta}} V\) denotes the average integral of \(V\) over \(\Lambda_{r,\theta}\). 
If we have
\[
\int_0^{\delta} \paren[\bigg]{\abs{\log r} \int_{\Lambda_{r,\theta}} V} \frac{\d{r}}{r} = \infty,
\]
then the equation
\[
- \Delta u + Vu = \haus^{N-2} \lfloor_M \quad \text{in \(\Omega\)}
\]
has no solution in the sense of distributions.
\end{proposition}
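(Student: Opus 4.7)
We argue by contradiction: assume that $u$ is a distributional solution, so that $Vu \in L\loc^1(\Omega)$. Since $\mu \ge 0$ and $V \ge 0$, we may assume $u \ge 0$ (the general case follows via Kato's inequality applied to $u^{\pm}$). Following the spirit of \Cref{prop:nonexistenceExample}, the strategy is to show that $u$ inherits the codimension-two logarithmic blow-up of the Newtonian potential $\cN\mu$ near $M$, and then, using the averaging hypothesis on $V$, to force $\int_\Omega Vu = +\infty$.

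\emph{Step 1 (averaged logarithmic bound).} Let $r(x) = \dist(x, M)$ and fix $\rho \in (0, \dist(M, \partial\Omega))$. For every small $r \in (0, \theta \rho)$, let $\xi_{r,\rho} \in C_c^\infty(\Omega)$ be a smooth approximation of the truncated logarithm $\log(\rho/\max(r(x),r))$, supported in $\Nu_\rho$, equal to $\log(\rho/r)$ on $\Nu_{\theta r}$, and coinciding with $\log(\rho/r(x))$ on the intermediate region away from the smoothing shells. In tubular coordinates $(y, z)$ around $M$, the function $\log(1/\abs{z})$ is $z$-harmonic, so $\Delta \xi_{r,\rho}$ concentrates on the two transition shells $\Lambda_{r,\theta}$ and $\Lambda_{\rho,\theta}$ (with $L^\infty$ norms of order $r^{-2}$ and $\rho^{-2}$ respectively), while on the intermediate region only the tangential curvature of $M$ contributes, yielding $\abs{\Delta \xi_{r,\rho}} \le C/r(x)$. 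Moreover $\int_\Omega \xi_{r,\rho}\d\mu = \log(\rho/r)\haus^{N-2}(M) + O(1)$, and $\int_\Omega Vu\,\xi_{r,\rho} \le \log(\rho/r) \int_{\Nu_\rho} Vu$. Choosing $\rho$ small enough that $\int_{\Nu_\rho} Vu \le \tfrac12 \haus^{N-2}(M)$ (possible since $Vu$ is locally integrable), and testing the distributional identity
\[
\int_\Omega u(-\Delta\xi_{r,\rho}) + \int_\Omega Vu\,\xi_{r,\rho} = \int_\Omega \xi_{r,\rho}\, \d\mu,
\]
one estimates the curvature contribution by geometric summation over dyadic tubes as $O(\norm{u}_{L^1(\Omega)})$, and the shell contribution by $C\fint_{\Lambda_{r,\theta}} u$. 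Letting $r \to 0$ yields the averaged lower bound
\[
\fint_{\Lambda_{r,\theta}} u \ge c_1 \abs{\log r} \quad \text{for all } 0 < r < r_1.
\]

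\emph{Step 2 (Harnack upgrade and Fubini conclusion).} Since $\supp \mu = M$ is disjoint from every annular tube $\Lambda_{r,\theta}$, $u$ solves $-\Delta u + Vu = 0$ there. The averaging hypothesis together with $V \in L^1(\Omega)$ gives
\[
\norm{V}_{L^\infty(\Lambda_{r,\theta})} \le C \fint_{\Lambda_{r,\theta}} V \le \frac{C'}{r^2}\norm{V}_{L^1(\Omega)},
\]
so that after the rescaling $y = x/r$ the resulting potential $r^2 V(r\,\cdot)$ is uniformly bounded on the fixed annular tube $\Lambda_{1,\theta}$. The Harnack inequality for the Schr\"odinger operator then applies with uniform constants, upgrading Step~1 to the pointwise estimate
\[
u(x) \ge c_2\abs{\log r}\quad \text{for a.e.\ } x \in \Lambda_{r,\theta'}, \quad 0 < r < r_1,
\]
for some $\theta' \in (\theta, 1)$. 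By Fubini, the divergence hypothesis is equivalent to $V\abs{\log r(\cdot)} \notin L^1(\Nu_\delta)$, whence
\[
\int_\Omega Vu \ge c_2 \int_{\Nu_{r_1}} V(x)\abs{\log r(x)}\,\d x = +\infty,
\]
contradicting $Vu \in L\loc^1(\Omega)$.

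The main obstacle is Step~1: the two-scale test function must be constructed so that the logarithmic contribution $\log(\rho/r)\haus^{N-2}(M)$ strictly dominates the absorption term $\log(\rho/r)\int_{\Nu_\rho} Vu$ (achieved by choosing $\rho$ small after the solution is fixed), and so that the curvature-induced part of $\Delta \xi_{r,\rho}$ on the intermediate region is a bounded perturbation, controlled by geometric summation over the dyadic tubes $\Lambda_{r_k, \theta}$. Step~2 relies on the crucial point that the averaging hypothesis, while not implying any uniform bound on $V$, does imply $r^2 \norm{V}_{L^\infty(\Lambda_{r,\theta})} \le C \norm{V}_{L^1(\Omega)}$, which is exactly the scale-invariant condition needed for a uniform Harnack constant.
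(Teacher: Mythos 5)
Your endgame—establish a $\abs{\log r}$ lower bound on $u$ near $M$ and run Fubini against the averaging hypothesis on $V$—is the right target, and the Fubini computation in Step~2 is essentially the one in the paper. But the route you take to the lower bound has two genuine gaps.

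First, the reduction to $u \ge 0$ is unjustified. The equation is posed in the sense of distributions on $\Omega$ with no boundary condition, so there is no maximum principle forcing positivity; Kato's inequality only yields that $u^+$ is a \emph{subsolution} of $-\Delta w + Vw \le \mu$, which does not eliminate $u^-$. This compromises the Harnack step, which requires a nonnegative solution on each annular tube.

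Second, and more seriously, the curvature contribution in Step~1 is not $O(\norm{u}_{L^1(\Omega)})$. In Fermi coordinates the Euclidean Laplacian differs from the flat $\Delta_z + \Delta_M$ by first-order terms with $O(1)$ coefficients, and applying these to $\log(1/\abs{z})$ produces $\abs{\Delta \xi_{r,\rho}} \lesssim 1/\dist(x,M)$ on the intermediate region, with no cancellation. The resulting term is essentially $\sum_k r_k^{-1} \int_{\Lambda_{r_k,\theta}} u$ over dyadic scales $r_k = 2^{-k}$, and $u \in L^1$ only controls $\sum_k \int_{\Lambda_{r_k,\theta}} u$; the extra factor $r_k^{-1}$ destroys the geometric summation you invoke. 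To make it finite you would essentially need to know $u \sim \abs{\log r}$ already, which is circular.

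This is exactly the obstruction the paper circumvents by bringing in the estimate of D\'avila--Ponce, \eqref{eq:davilaPonce}. Let $v$ solve $-\Delta v = Vu$ in $\Omega$ with $v = 0$ on $\partial\Omega$; then $\Delta(u - \NP\mu) = Vu \in L^1(\Omega)$, and Weyl's lemma gives $u = -v + \NP\mu + h$ with $h$ harmonic. Applying \eqref{eq:davilaPonce} to $v - h$ yields $\int_{\Lambda_{r,\theta}} \abs{v - h} \le \varepsilon r^2 \abs{\log r}$ for $r$ small, which is precisely the quantitative input your test-function argument cannot produce. The averaging hypothesis on $V$ is then used \emph{not} for Harnack (no pointwise upgrade is needed) but to convert this $L^1$ smallness into smallness against $V\d x$:
\[
\int_{\Lambda_{r,\theta}} V\abs{v-h} \le C \int_{\Lambda_{r,\theta}} \abs{v-h} \fint_{\Lambda_{r,\theta}} V \le C'\varepsilon\abs{\log r}\int_{\Lambda_{r,\theta}} V,
\]
and combined with $\NP\mu \ge C_1\abs{\log r}$ on $\Lambda_{r,\theta}$ one gets the averaged lower bound $\int_{\Lambda_{r,\theta}} Vu \ge (C_1 - C'\varepsilon)\abs{\log r}\int_{\Lambda_{r,\theta}} V$ directly, with no positivity of $u$ required. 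Your Fubini step then concludes.
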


For the proof of \Cref{prop:nonexistence}, we rely on a computation from \cite{Davila_Ponce:2003}*{Theorem~6} asserting that if \(v \in L\loc^1(\Omega)\) satisfies for some \(g \in L\loc^1(\Omega)\) the equation
\[
\Delta v = g \quad \text{in the sense of distributions in \(\Omega\)},
\]
then, for every smooth compact manifold \(M \subset \Omega\) without boundary of dimension \(N-2\), one has
\begin{equation}
\label{eq:davilaPonce}
\lim_{r \rightarrow 0} \frac{1}{r^2 \abs{\log r}} \int_{\Nu_r} \abs{v} = 0.
\end{equation}

\resetconstant

\begin{proof}[Proof of \Cref{prop:nonexistence}]
Assume by contradiction that \(u\) is a solution of the equation above and let \(v\) be the solution of the Dirichlet problem
\[
\left\{
\begin{alignedat}{2}
-\Delta v &= Vu && \quad \text{in \(\Omega\)}, \\
v &= 0 && \quad \text{on \(\partial\Omega\)}.
\end{alignedat}
\right.
\]
Throughout the proof, we use the notation \(\mu = \haus^{N-2} \lfloor_M\).
Since
\[
-\Delta u = -\Delta (-v + \NP\mu) \quad \text{in the sense of distributions in \(\Omega\)},
\]
by Weyl's lemma there exists a harmonic function \(h : \Omega \rightarrow \R\) such that
\[
u = - v + \NP\mu + h \quad \text{almost everywhere in \(\Omega\)}.
\]
Taking \(\delta\) to be smaller if necessary, there exists \(\Cl{cst:davila_ponce} > 0\) such that
\[
\NP\mu \ge C_1 \abs{\log r} \quad \text{almost everywhere in \(\Lambda_{r,\theta}\)},
\]
for every \(0 < r < \delta\). Thus,
\[
\int_{\Lambda_{r,\theta}} V \NP\mu \ge \Cs \abs{\log r} \int_{\Lambda_{r,\theta}} V.
\]
Let \(\varepsilon > 0\). Taking \(\delta\) to be smaller if necessary, by \eqref{eq:davilaPonce} applied to \(v - h\), we have
\[
\int_{\Lambda_{r,\theta}} \abs{v - h} \le \varepsilon r^2 \abs{\log r},
\]
for every \(0 < r < \delta\). Using the upper bound of \(V\) in terms of its average integral, we get
\[
\int_{\Lambda_{r,\theta}} V \abs{v - h} \le C \int_{\Lambda_{r,\theta}} \abs{v - h} \fint_{\Lambda_{r,\theta}} V \le C \varepsilon r^2 \abs{\log r} \fint_{\Lambda_{r,\theta}} V \le \C \varepsilon \abs{\log r} \int_{\Lambda_{r,\theta}} V,
\]
whence
\[
\int_{\Lambda_{r,\theta}} Vu \ge (C_1 - \Cs \varepsilon) \abs{\log r} \int_{\Lambda_{r,\theta}} V.
\]
Dividing both sides by \(r\) and integrating from \(0\) to \(\delta\) with respect to \(r\), we obtain
\[
\int_0^\delta \paren[\bigg]{\int_{\Lambda_{r,\theta}} Vu} \frac{\d{r}}{r} \ge (\Cr{cst:davila_ponce} - \Cs \varepsilon) \int_0^\delta \paren[\bigg]{\abs{\log r} \int_{\Lambda_{r,\theta}} V} \frac{\d{r}}{r}.
\]
Taking \(\varepsilon > 0\) such that \(\Cr{cst:davila_ponce} > \Cs \varepsilon\), the inequality above yields
\[
\int_0^\delta \paren[\bigg]{\int_{\Lambda_{r,\theta}} Vu} \frac{\d{r}}{r} = \infty.
\]
On the other hand, we deduce from Fubini's theorem that
\[
\begin{split}
\int_0^\delta \paren[\bigg]{\int_{\Lambda_{r,\theta}} Vu } \frac{\d{r}}{r} 
& = \int_{\Nu_{\delta}} \paren[\bigg]{\int_{\dist(x,M)}^{\min \set{\dist(x,M)/\theta,\delta}} \frac{\d{r}}{r}} V(x)u(x) \d{x} \\ 
& \le \int_{\Nu_{\delta}} \paren[\bigg]{\int_{\dist(x,M)}^{\dist(x,M)/\theta} \frac{\d{r}}{r}} V(x)u(x) \d{x} 
= \paren[\bigg]{\log \frac{1}{\theta}} \int_{\Nu_{\delta}} Vu.
\end{split}
\]
Thus, \(Vu \not\in L^1(\Nu_{\delta})\), which contradicts the assumption that \(u\) satisfies the equation with potential \(V\) and concludes the proof of the proposition.
\end{proof}

To illustrate the connection between \Cref{prop:nonexistence} and the Dirichlet problem involving \(H^{1}\) potentials, we sketch the construction of a signed function \(f \in H^1(\Omega)\) such that \(V = f^+\) satisfies the assumptions of \Cref{prop:nonexistence}. For this purpose, we rely on Whitney decomposition's of \(\Omega \setminus M\) (cf.~\Cref{fig:whitneyDecomposition} below) and the atomic characterization of the Hardy space \(H^1\); see \cite{Stein:1993}*{Chapter~III}.

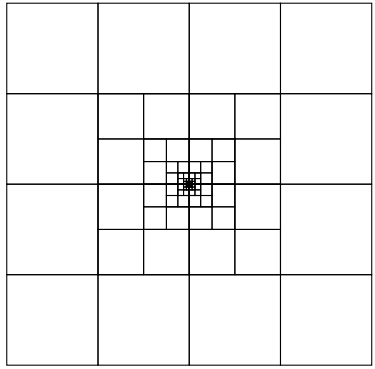
\begin{figure}[h]
\begin{tikzpicture}[scale=1.2]
\foreach \i in {0,...,6} { \foreach \j in {0,...,3} { \foreach \k in {0,...,3} {
\draw ({2*(1-0.5^\i))+\j/(2^\i)},{2*(1-0.5^\i))+\k/(2^\i)}) rectangle ({2*(1-0.5^(\i+1)))+\j/(2^\i)},{2*(1-0.5^(\i+1)))+\k/(2^\i)}); }}}
\end{tikzpicture}
\caption{Whitney's decomposition of \(\bracks*{-\frac{1}{2},\frac{1}{2}}^2 \setminus \set{0}\).}
\label{fig:whitneyDecomposition}
\end{figure}

\begin{corollary}
Suppose that \(N \ge 3\). Then there exists \(f \in H^1(\Omega)\) such that the Dirichlet problem~\eqref{eq:dirichletProblem} with potential \(V = f^+\) does not have a solution for every measure \(\mu \in \cM(\Omega)\) which is diffuse with respect to the \(\ch\) capacity.
\end{corollary}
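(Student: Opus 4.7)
The plan is to realize the nonexistence mechanism of \Cref{prop:nonexistence} using a potential of the form $V = f^+$ with $f \in H^1(\Omega)$ built as an atomic series on the Whitney decomposition of $\Omega \setminus M$, where $M \subset \Omega$ is a smooth compact $(N-2)$-dimensional manifold without boundary (e.g.\ a small $(N-2)$-sphere in $\Omega$). The target measure will be $\mu = \haus^{N-2}\lfloor_M$, which is finite, positive, and diffuse with respect to $\capt_{\ch}$: indeed, by \Cref{prop:equivalence} a compact set has zero $\ch$ capacity iff it has zero $\haus_\infty^{N-2}$ iff it has zero $\haus^{N-2}$, and $\mu \ll \haus^{N-2}$ by construction.

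First, I would fix a Whitney decomposition $\{Q_k\}$ of $\Omega \setminus M$, indexed by dyadic scales $j \in \N$ so that the cubes $Q_k$ at generation $j$ have side length $\sim 2^{-j}$ and $\dist(Q_k, M) \sim 2^{-j}$. Since $M$ is an $(N-2)$-submanifold, the number of generation-$j$ cubes inside any fixed neighborhood of $M$ is of order $n_j \sim 2^{j(N-2)}$, and these cubes fill the annular neighborhood $\Lambda_{2^{-j},\theta}$ in a comparable way. On each cube $Q_k$ I would split $Q_k = Q_k^+ \sqcup Q_k^-$ into two halves of equal volume and set
\[
a_k = \frac{1}{\abs{Q_k}}\paren[\big]{\chi_{Q_k^+} - \chi_{Q_k^-}},
\]
which is an $H^1$ atom up to an absolute constant ($\int a_k = 0$, $\norm{a_k}_{L^\infty} = 1/|Q_k|$, support in the ball circumscribing $Q_k$). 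Then I would define
\[
f = \sum_{j=0}^\infty \lambda_j \sum_{Q_k \text{ at scale } 2^{-j}} a_k,
\qquad \lambda_j = \frac{1}{j^2 \, 2^{j(N-2)}},
\]
with the convention that the $j=0$ term is harmless. By the atomic characterization of $H^1(\R^N)$ \cite{Stein:1993}, we have
\[
\norm{f}_{H^1(\R^N)} \le C \sum_j n_j \lambda_j \sim \sum_j \frac{1}{j^2} < \infty,
\]
so $f \in H^1(\Omega)$ after restriction.

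Next, I verify the three hypotheses of \Cref{prop:nonexistence} for $V = f^+$. Since $a_k$ takes the constant value $\lambda_j/|Q_k|$ on $Q_k^+$ and $-\lambda_j/|Q_k|$ on $Q_k^-$, the positive part reduces on each cube to $\lambda_j/|Q_k| \cdot \chi_{Q_k^+}$. This gives both $V \in L^1(\Omega)$ (bounded by $|f|$) and, for dyadic $r \sim 2^{-j}$,
\[
\int_{\Lambda_{r,\theta}} V \sim n_j \cdot \frac{\lambda_j}{|Q_k|} \cdot |Q_k^+| \sim 2^{j(N-2)} \lambda_j = \frac{1}{j^2}.
\]
The pointwise value of $V$ on any $Q_k^+ \subset \Lambda_{r,\theta}$ is $\lambda_j/|Q_k| \sim \lambda_j/r^N$, while the average $\fint_{\Lambda_{r,\theta}} V \sim \lambda_j/r^N$ as well, so the averaging assumption $V \le C \fint_{\Lambda_{r,\theta}} V$ holds uniformly. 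Finally, the divergence condition becomes
\[
\int_0^\delta \abs{\log r} \paren*{\int_{\Lambda_{r,\theta}} V} \frac{\d r}{r} \sim \sum_{j} \frac{j}{j^2} = \sum_j \frac{1}{j} = \infty.
\]
\Cref{prop:nonexistence} then shows that $-\Delta u + Vu = \mu$ has no distributional solution in $\Omega$, hence no Dirichlet solution either.

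The main obstacle is bookkeeping on two fronts: making sure the Whitney cubes at scale $2^{-j}$ fit both the $H^1$ atomic norm (which forces the sum $\sum_j n_j \lambda_j$ to converge) and the divergence integral (which forces $\sum_j j\, n_j \lambda_j = \infty$), and checking that the uniform averaging bound in \Cref{prop:nonexistence} is genuinely satisfied on each annulus and not merely at dyadic radii. The first is resolved by the geometric gap between $\sum 1/j^2$ and $\sum 1/j$ produced by the extra $\abs{\log r}$ factor, while the second follows from the standard Whitney property that all cubes meeting $\Lambda_{r,\theta}$ have comparable side length, so the pointwise-to-average ratio is bounded independently of $r$.
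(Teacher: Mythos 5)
Your construction follows the same blueprint as the paper's proof: a Whitney decomposition of $\Omega \setminus M$ for a compact smooth $(N-2)$-manifold $M$, an atomic series built on the Whitney cubes with one atom per cube, a judicious choice of coefficients balancing the $H^1$ atomic bound against the divergence of the log-weighted integral, and then an appeal to \Cref{prop:nonexistence} with $\mu = \haus^{N-2}\lfloor_M$. All of the ingredients and the overall logic are the same.

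There is, however, one place where you actually tighten the argument relative to what is printed. The paper groups the Whitney cubes into generations $\cF_j$ by $\dist(p_i, M) \sim 2^{-j}$ and asserts that the number of cubes in $\cF_j$ is bounded independently of $j$, which would give $\norm{f}_{H^1} \lesssim \sum_j \alpha_j$ directly with $\alpha_j = 1/j^2$. For an $(N-2)$-dimensional manifold in $\R^N$ with $N \ge 3$, this count is in fact $n_j \sim 2^{j(N-2)}$, as you observe: the tubular annulus $\Lambda_{2^{-j},\theta}$ has volume $\sim 2^{-2j}$ and each Whitney cube in it has volume $\sim 2^{-jN}$. You compensate by taking $\lambda_j = 1/(j^2\, 2^{j(N-2)})$, so that $\sum_j n_j \lambda_j \sim \sum_j 1/j^2 < \infty$ controls the atomic norm while $\int_{\Lambda_{r,\theta}} V \sim n_j\lambda_j \sim 1/j^2$ still yields $\sum_j j\,n_j\lambda_j = \infty$ for the divergence condition. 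This rescaling is exactly what makes the bookkeeping close, and it is necessary: with a bounded selection of cubes per generation the averaging hypothesis $V \le C\fint_{\Lambda_{r,\theta}} V$ of \Cref{prop:nonexistence} would actually fail, since the pointwise-to-mean ratio would grow like $2^{j(N-2)}$. Your verification of that hypothesis ($V \sim \lambda_j/r^N$ pointwise and $\fint_{\Lambda_{r,\theta}} V \sim \lambda_j/r^N$) is therefore the step that confirms the construction is consistent. The remaining minor imprecisions you flag yourself (non-dyadic radii, cubes straddling the annulus boundary) are harmless, since all cubes meeting $\Lambda_{r,\theta}$ have side length within a factor depending only on $\theta$ and $N$. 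In short: same route as the paper, but with the Whitney cube count stated and used correctly, which in turn dictates the $2^{-j(N-2)}$ factor in $\lambda_j$.
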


\resetconstant

\begin{proof}
For convenience, we may assume that \(\Omega\) is a unit open cube. Let \(M \subset \Omega\) be a smooth compact manifold of dimension \(N-2\). By the Whitney decomposition theorem \cite{Stein:1970}*{Chapter~I, Theorem~3}, there exists a family \((Q_i)_{i \in \N}\) of closed cubes with pairwise disjoint interiors such that
\[
\bigcup_{i \in \N} Q_i = \Omega \setminus M,
\]
and
\begin{equation}
\label{eq:whitneyDistanceDiameter}
\diam Q_i \le \dist(Q_i, M) \le 4 \diam Q_i,
\end{equation}
for every \(i \in \N\). For each \(i \in \N\), denote by \(p_i\) the center of the cube \(Q_i\), and by \(l_i\) its side length. Fix a function \(a : \R^N \rightarrow \R\) supported in the cube \(\bracks{-\frac{1}{2}, \frac{1}{2}}^N\) such that \(a = 1\) in the upper half of the cube, and \(a = -1\) in the lower half. Since
\[
\int_{\R^N} a = 0 \quad \text{and} \quad \abs{a} \le 1 \quad \text{in \(\R^N\)},
\]
\(a\) is an \(H^1\) atom. The function \(a_i : \R^N \rightarrow \R\) defined for \(x \in \R^N\) by
\[
a_i(x) =  \frac{1}{\abs{Q_i}} \, a \paren*{\frac{x - p_i}{l_i}}
\]
is also an \(H^1\) atom, supported by \(Q_i\). We now gather the cubes \(Q_i\) in disjoint classes \(\cF_j\), with \(j \in \N \setminus \set{0}\): we say that \(Q_i \in \cF_j\) if
\[
\dist(p_i, M) \sim \frac{1}{2^j}.
\]
Since \(M\) is a manifold, the number of cubes in \(\cF_j\) is bounded from above, independently of \(j\). Given a summable sequence \((\alpha_j)_{j \in \N \setminus \set{0}}\) of positive numbers, it follows from the atomic characterization of \(H^1(\R^N)\) that the function
\[
f = \sum_{j=1}^\infty \alpha_j \paren[\Bigg]{\sum_{a_i \in \cF_j} a_i}
\]
belongs to \(H^1(\R^N)\) and
\[
\norm{f}_{H^1(\R^N)} \le C \sum_{j=1}^\infty \alpha_j.
\]
Assume further that
\begin{equation}
\label{eq:assumptionSequence}
\C \le \frac{\alpha_j}{\alpha_{j+1}} \le \C,
\end{equation}
with positive constants \(C_1\) and \(C_2\) independent of \(j\). Take the factor \(0 < \theta < 1\) such that, given \(0 < r < \dist(M, \partial\Omega)\), if \(j \in \N \setminus \set{0}\) is the smallest integer such that
\[
\bigcup_{a_i \in \cF_j} a_i \subset \Nu_r,
\]
then
\begin{equation}
\label{eq:familySubset}
\bigcup_{a_i \in \cF_j} a_i \subset \Lambda_{r,\theta}.
\end{equation}
Such a \(\theta\) exists by virtue of \eqref{eq:whitneyDistanceDiameter} and can be explicitly estimated in terms of the dimension~\(N\). Using \eqref{eq:assumptionSequence} and \eqref{eq:familySubset}, one verifies that
\[
\int_{\Lambda_{r,\theta}} f^+ \sim \int_{\Lambda_{r,\theta}} \abs{f} \sim \alpha_j,
\]
and then
\[
\abs{\log r} \int_{\Lambda_{r,\theta}} f^+ \sim j \alpha_j,
\]
since \(r \sim 1/2^{j}\).
Assuming for simplicity that \(\Nu_1 \subset \Omega\), then \(\displaystyle \bigcup_{j \in \N \setminus \set{0}} \paren[\bigg]{\bigcup_{a_i \in \cF_j} a_i} \subset \Omega\) and we have
\[
\int_0^1 \paren[\bigg]{\abs{\log r} \int_{\Lambda_{r,\theta}} f^+} \frac{\d{r}}{r} \sim \sum_{j=1}^\infty j \alpha_j.
\]
Taking \(\alpha_j = {1}/{j^2}\), the function \(V = f^+\) satisfies the assumptions of \Cref{prop:nonexistence}, and the conclusion follows with \(\mu = \haus^{N-2} \lfloor_M\).
\end{proof}

\section*{Acknowledgements}

The authors would like to thank D.~Spector for bringing to their attention D.~Adams' equivalence between the \(\ch\) capacity and the \(\haus^{N-2}_{\infty}\) Hausdorff content, and Q.-H. Nguyen for discussions on the strong maximum principle for the Schrödinger operator.
The first author (ACP) was supported by the Fonds de la Recherche scientifique -- FNRS under research grant J.0026.15.

\begin{bibdiv}
\begin{biblist}

\bib{Adams:1975}{article}{
   author={Adams, David R.},
   title={A note on Riesz potentials},
   journal={Duke Math. J.},
   volume={42},
   date={1975},
   number={4},
   pages={765--778},
}

\bib{Adams:1988}{article}{
   author={Adams, David R.},
   title={A note on Choquet integrals with respect to Hausdorff capacity},
   conference={
      title={Function spaces and applications},
      address={Lund},
      date={1986},
   },
   book={
      series={Lecture Notes in Math.},
      volume={1302},
      publisher={Springer},
      address={Berlin},
   },
   date={1988},
   pages={115--124},
}

\bib{Ancona:1979}{article}{
   author={Ancona, Alano},
   title={Une propri\'et\'e d'invariance des ensembles absorbants par
   perturbation d'un op\'erateur elliptique},
   journal={Comm. Partial Differential Equations},
   volume={4},
   date={1979},
   number={4},
   pages={321--337},
}

\bib{Baras_Pierre:1984}{article}{
   author={Baras, P.},
   author={Pierre, M.},
   title={Singularit\'es \'eliminables pour des \'equations
   semi-lin\'eaires},
   journal={Ann. Inst. Fourier (Grenoble)},
   volume={34},
   date={1984},
   number={1},
   pages={185--206},
}

\bib{Bartolucci_Leoni_Orsina_Ponce:2005}{article}{
   author={Bartolucci, Daniele},
   author={Leoni, Fabiana},
   author={Orsina, Luigi},
   author={Ponce, Augusto C.},
   title={Semilinear equations with exponential nonlinearity and measure
   data},
   journal={Ann. Inst. H. Poincar\'e Anal. Non Lin\'eaire},
   volume={22},
   date={2005},
   number={6},
   pages={799--815},
}

\bib{Benilan_Brezis:2003}{article}{
   author={B{\'e}nilan, Philippe},
   author={Brezis, Ha{\u\i}m},
   title={Nonlinear problems related to the Thomas-Fermi equation},
   journal={J. Evol. Equ.},
   volume={3},
   date={2003},
   number={4},
   pages={673--770},
}

\bib{Bonami_Iwaniec_Jones_Zinsmeister:2007}{article}{
   author={Bonami, Aline},
   author={Iwaniec, Tadeusz},
   author={Jones, Peter},
   author={Zinsmeister, Michel},
   title={On the product of functions in BMO and \(H^1\)},
   journal={Ann. Inst. Fourier (Grenoble)},
   volume={57},
   date={2007},
   number={5},
   pages={1405--1439},
}

\bib{Brezis:2011}{book}{
   author={Brezis, Haim},
   title={Functional analysis, Sobolev spaces and partial differential
   equations},
   series={Universitext},
   publisher={Springer, New York},
   date={2011},
}

\bib{Brezis_Marcus_Ponce:2007}{article}{
   author={Brezis, H.},
   author={Marcus, M.},
   author={Ponce, A. C.},
   title={Nonlinear elliptic equations with measures revisited},
   conference={
      title={Mathematical aspects of nonlinear dispersive equations},
   },
   book={
      series={Ann. of Math. Stud.},
      volume={163},
      publisher={Princeton Univ. Press, Princeton, NJ},
   },
   date={2007},
   pages={55--109},
}

\bib{Brezis_Merle:1991}{article}{
   author={Brezis, Ha{\"{\i}}m},
   author={Merle, Frank},
   title={Uniform estimates and blow-up behavior for solutions of \(-\Delta
   u=V(x)e^u\) in two dimensions},
   journal={Comm. Partial Differential Equations},
   volume={16},
   date={1991},
   number={8-9},
   pages={1223--1253},
}

\bib{Brezis_Strauss:1973}{article}{
   author={Brezis, Ha{\"{\i}}m},
   author={Strauss, Walter A.},
   title={Semi-linear second-order elliptic equations in \(L^1\)},
   journal={J. Math. Soc. Japan},
   volume={25},
   date={1973},
   pages={565--590},
}

\bib{Carleson:67}{book}{
      author={Carleson, Lennart},
       title={Selected problems on exceptional sets},
      series={Van Nostrand Mathematical Studies},
      volume={13},
      publisher={Van Nostrand},
     address={Princeton, NJ},
        date={1967},
}

\bib{DalMaso:1983}{article}{
   author={Dal Maso, Gianni},
   title={On the integral representation of certain local functionals},
   journal={Ricerche Mat.},
   volume={32},
   date={1983},
   number={1},
   pages={85--113},
}

\bib{Davila_Ponce:2003}{article}{
   author={D{\'a}vila, Juan},
   author={Ponce, Augusto C.},
   title={Variants of Kato's inequality and removable singularities},
   journal={J. Anal. Math.},
   volume={91},
   date={2003},
   pages={143--178},
}

\bib{Evans_Gariepy:2015}{book}{
   author={Evans, Lawrence C.},
   author={Gariepy, Ronald F.},
   title={Measure theory and fine properties of functions},
   series={Textbooks in Mathematics},
   edition={Revised edition},
   publisher={CRC Press, Boca Raton, FL},
   date={2015},
   pages={xiv+299},
}

\bib{Fefferman:1971}{article}{
   author={Fefferman, Charles},
   title={Characterizations of bounded mean oscillation},
   journal={Bull. Amer. Math. Soc.},
   volume={77},
   date={1971},
   pages={587--588},
}

\bib{Feyel_delaPradelle:1977}{article}{
   author={Feyel, D.},
   author={de la Pradelle, A.},
   title={Topologies fines et compactifications associ\'ees \`a certains
   espaces de Dirichlet},
   journal={Ann. Inst. Fourier (Grenoble)},
   volume={27},
   date={1977},
   number={4},
   pages={x, 121--146},
}

\bib{Fleming:1960}{article}{
   author={Fleming, Wendell H.},
   title={Functions whose partial derivatives are measures},
   journal={Illinois J. Math.},
   volume={4},
   date={1960},
   pages={452--478},
}

\bib{Gallouet_Morel:1984}{article}{
   author={Gallou{\"e}t, Thierry},
   author={Morel, Jean-Michel},
   title={Resolution of a semilinear equation in $L^{1}$},
   journal={Proc. Roy. Soc. Edinburgh Sect. A},
   volume={96},
   date={1984},
   number={3-4},
   pages={275--288},
}

\bib{Gilbarg_Trudinger:1983}{book}{
   author={Gilbarg, David},
   author={Trudinger, Neil S.},
   title={Elliptic partial differential equations of second order},
   series={Grundlehren der Mathematischen Wissenschaften},
   volume={224},
   edition={2},
   publisher={Springer-Verlag, Berlin},
   date={1983},
}

\bib{Lieb:1981}{article}{
   author={Lieb, Elliott H.},
   title={Thomas-Fermi and related theories of atoms and molecules},
   journal={Rev. Modern Phys.},
   volume={53},
   date={1981},
   number={4},
   pages={603--641},
}

\bib{Littman_Stampacchia_Weinberger:1963}{article}{
   author={Littman, W.},
   author={Stampacchia, G.},
   author={Weinberger, H. F.},
   title={Regular points for elliptic equations with discontinuous
   coefficients},
   journal={Ann. Scuola Norm. Sup. Pisa (3)},
   volume={17},
   date={1963},
   pages={43--77},
}

\bib{Maz'ya:1973}{article}{
   author={Maz'ya, V. G.},
   title={On certain integral inequalities for functions of many variables},
   journal={Journal of Soviet Mathematics},
   volume={1},
   date={1973},
   number={2},
   pages={205--234},
}

\bib{Maz'ya:1979}{article}{
   author={Maz'ya, V. G.},
   title={Summability, with respect to an arbitrary measure, of functions
   from S. L. Sobolev-L. N. Slobodecki\u\i\ spaces},
   journal={Zap. Nauchn. Sem. Leningrad. Otdel. Mat. Inst. Steklov. (LOMI)},
   volume={92},
   date={1979},
   pages={192--202, 322},
}

\bib{Maz'ya:2011}{book}{
   author={Maz'ya, V. G.},
   title={Sobolev spaces with applications to elliptic partial differential
   equations},
   series={Grundlehren der Mathematischen Wissenschaften},
   volume={342},
   edition={Second, revised and augmented edition},
   publisher={Springer, Heidelberg},
   date={2011},
}

\bib{Meyers_Ziemer:1977}{article}{
   author={Meyers, Norman G.},
   author={Ziemer, William P.},
   title={Integral inequalities of Poincar\'e and Wirtinger type for BV
   functions},
   journal={Amer. J. Math.},
   volume={99},
   date={1977},
   number={6},
   pages={1345--1360},
}

\bib{Orsina_Ponce:2008}{article}{
   author={Orsina, Luigi},
   author={Ponce, Augusto C.},
   title={Semilinear elliptic equations and systems with diffuse measures},
   journal={J. Evol. Equ.},
   volume={8},
   date={2008},
   number={4},
   pages={781--812},
}

\bib{Orsina_Ponce:2016}{article}{
   author={Orsina, Luigi},
   author={Ponce, Augusto C.},
   title={Strong maximum principle for Schr\"odinger operators with singular
   potential},
   journal={Ann. Inst. H. Poincar\'e Anal. Non Lin\'eaire},
   volume={33},
   date={2016},
   number={2},
   pages={477--493},
}

\bib{Orsina_Ponce:2017}{article}{
   author={Orsina, Luigi},
   author={Ponce, Augusto C.},
   title={Hopf potentials for the Schr\"odinger operator},
   note={Paper submitted for publication},
}

\bib{Ponce:2005}{article}{
   author={Ponce, Augusto C.},
   title={How to construct good measures},
   conference={
      title={Elliptic and parabolic problems},
   },
   book={
      series={Progr. Nonlinear Differential Equations Appl.},
      volume={63},
      publisher={Birkh\"auser, Basel},
   },
   date={2005},
   pages={375--388},
}

\bib{Ponce:2016}{book}{
  author={Ponce, Augusto C.},
  title={Elliptic PDEs, Measures and Capacities. From the Poisson equation to Nonlinear Thomas-Fermi problems},
  series={EMS Tracts in Mathematics},
  volume={23},
  publisher={European Mathematical Society (EMS)},
  address={Z\"urich},
  date={2016},
}

\bib{Stampacchia:1965}{article}{
   author={Stampacchia, Guido},
   title={Le probl\`eme de Dirichlet pour les \'equations elliptiques du
   second ordre \`a coefficients discontinus},
   journal={Ann. Inst. Fourier (Grenoble)},
   volume={15},
   date={1965},
   number={fasc. 1},
   pages={189--258},
}

\bib{Stein:1969}{article}{
   author={Stein, Elias M.},
   title={Note on the class \(L \, \mathrm{log} \, L\)},
   journal={Studia Math.},
   volume={32},
   date={1969},
   pages={305--310},
}

\bib{Stein:1970}{book}{
   author={Stein, Elias M.},
   title={Singular integrals and differentiability properties of functions},
   series={Princeton Mathematical Series, No. 30},
   publisher={Princeton University Press},
   address={Princeton, NJ},
   date={1970},
   pages={xiv+290},
}

\bib{Stein:1993}{book}{
   author={Stein, Elias M.},
   title={Harmonic analysis: real-variable methods, orthogonality, and
   oscillatory integrals},
   series={Princeton Mathematical Series},
   volume={43},
   publisher={Princeton University Press},
   address={Princeton, NJ},
   date={1993},
   pages={xiv+695},
}

\bib{Vasquez:1983}{article}{
   author={V{\'a}zquez, Juan L.},
   title={On a semilinear equation in \(\R^2\) involving bounded
   measures},
   journal={Proc. Roy. Soc. Edinburgh Sect. A},
   volume={95},
   date={1983},
   number={3-4},
   pages={181--202},
}

\bib{Veron_Yarur:2012}{article}{
   author={V\'eron, Laurent},
   author={Yarur, Cecilia},
   title={Boundary value problems with measures for elliptic equations with
   singular potentials},
   note={Appendix A by A.~Ancona},
   journal={J. Funct. Anal.},
   volume={262},
   date={2012},
   number={3},
   pages={733--772},
}

\bib{Willem:2013}{book}{
   author={Willem, Michel},
   title={Functional analysis},
   series={Cornerstones},
   publisher={Birkh\"auser},
   address={Basel},
   date={2013},
   pages={xiv+213},
}

\end{biblist}
\end{bibdiv}

\end{document}